\newcommand{\mb}[1]{\mathbf{#1}}
\newcommand{\msb}[1]{\boldsymbol{#1}}
\newcommand{\ts}{\textstyle}
\newcommand{\teq}{\triangleq}
\newcommand{\fx}[2]{#1\!\left(#2\right)}
\newcommand{\bfx}[2]{#1\big(#2\big)}
\newcommand{\varmin}[2]{\underset{#1}{\text{min}}\left\{#2\right\}}
\newcommand{\nbvarmin}[2]{\underset{#1}{\text{min}}\,#2}
\newcommand{\argmin}[2]{\text{arg}\,\varmin{#1}{#2}}
\newcommand{\nbargmin}[2]{\text{arg}\,\nbvarmin{#1}{#2}}
\newcommand{\nbvarmax}[2]{\underset{#1}{\text{max}}\,#2}
\newcommand{\nbargminmax}[3]{\text{arg}\,\nbvarmin{#1}{}\nbvarmax{#2}{#3}}
\newcommand{\norm}[2]{\left\|#1\right\|_{#2}}
\newcommand{\bnorm}[2]{\big\|#1\big\|_{#2}}
\newcommand{\iter}[2]{#1^{\left(#2\right)}}
\renewcommand{\L}{\mathsf{L}}
\newcommand{\R}{\mathsf{R}}
\newcommand{\diag}[1]{\mathsf{diag}\!\left\{#1\right\}}
\newcommand{\real}{{\rm I\!R}}
\def\blfootnote{\xdef\@thefnmark{}\@footnotetext}
\newtheorem{theorem}{Theorem}
\newtheorem{lemma}{Lemma}
\begin{document}

\title{Relaxed Linearized Algorithms for Faster X-Ray CT Image Reconstruction: Supplementary Material}
\author{Hung Nien, \textit{Member, IEEE}, and Jeffrey A. Fessler, \textit{Fellow, IEEE}\thanks{This work is supported in part by National Institutes of Health (NIH) grants \mbox{U01-EB-018753} and by equipment donations from Intel Corporation. Hung Nien and Jeffrey A. Fessler are with the Department of Electrical Engineering and Computer Science, University of Michigan, Ann Arbor, MI 48109, USA (e-mail: \texttt{\{hungnien,fessler\}@umich.edu}). Date of current version: December 11, 2015.}}

\maketitle

This supplementary material for \cite{nien:16:rla} has three parts. The first part analyzes the convergence rate of the simple and proposed relaxed linearized augmented Lagrangian (AL) methods (LALM's) in \cite{nien:16:rla} for solving an equality-constrained composite convex optimization problem. We demonstrate the convergence rate bound and the effect of relaxation with a numerical example (LASSO regression). The second part derives the continuation sequence we used in \cite{nien:16:rla}. The third part shows additional experimental results of applying the proposed relaxed LALM with ordered subsets (OS) for solving model-based X-ray computed tomography (CT) image reconstruction problems. The additional experimental results are consistent with the results we showed in \cite{nien:16:rla}, illustrating the efficiency and stability of the proposed relaxed OS-LALM over existing methods.

\section{Convergence rate analyses of the simple and proposed LALM's} \label{sec:nien-16-rla-supp:conv_analysis}
We begin by considering a more general equality-constrained composite convex optimization problem (for which the equality-constrained minimization problem considered in \cite{nien:16:rla} is a special case):
\begin{equation} \label{eq:nien-16-rla-supp:eq_cvx}
    \left(\hat{\mb{x}},\hat{\mb{u}}\right)
    \in
    \argmin{\mb{x},\mb{u}}{\fx{f}{\mb{x},\mb{u}}\teq\fx{g}{\mb{u}}+\fx{h}{\mb{x}}}\text{ s.t. }\mb{Kx}+\mb{Bu}=\mb{b} \, ,
\end{equation}
where both $g$ and $h$ are closed and proper convex functions. We further decompose $h\teq\phi+\psi$ into two convex functions $\phi$ and $\psi$, where $\phi$ is ``simple'' in the sense that it has an efficient proximal mapping, e.g., soft-shrinkage for the $\ell_1$-norm, and $\psi$ is continuously differentiable with $\mb{D}_{\psi}$-Lipschitz gradients (defined in \cite{nien:16:rla}). One example of $h$ is the edge-preserving regularizer with a non-negativity constraint (e.g., sum of a ``corner-rounded'' total-variation [TV] regularizer and the characteristic function of the non-negativity set) used in statistical image reconstruction methods \cite{nien:15:fxr,nien:16:rla}.

As mentioned in \cite{nien:16:rla}, solving a composite convex optimization problem with equality constraints like \eqref{eq:nien-16-rla-supp:eq_cvx} is equivalent to finding a saddle-point of the Lagrangian:
\begin{equation} \label{eq:nien-16-rla-supp:def_lagrangian}
    \fx{\mathcal{L}}{\mb{x},\mb{u},\msb{\mu}}
    \teq
    \fx{f}{\mb{x},\mb{u}}-\langle\msb{\mu},\mb{Kx}+\mb{Bu}-\mb{b}\rangle \, ,
\end{equation}
where $\msb{\mu}$ is the Lagrange multiplier of the equality constraint \cite[p. 237]{boyd:04:co}. In other words, $\left(\hat{\mb{x}},\hat{\mb{u}},\hat{\msb{\mu}}\right)$ solves the minimax problem:
\begin{equation} \label{eq:nien-16-rla-supp:saddle_point_problem}
    \left(\hat{\mb{x}},\hat{\mb{u}},\hat{\msb{\mu}}\right)
    \in
    \nbargminmax{\mb{x},\mb{u}}{\msb{\mu}}{\fx{\mathcal{L}}{\mb{x},\mb{u},\msb{\mu}}} \, .
\end{equation}
Moreover, since $\left(\hat{\mb{x}},\hat{\mb{u}},\hat{\msb{\mu}}\right)$ is a saddle-point of $\mathcal{L}$, the following inequalities
\begin{equation} \label{eq:nien-16-rla-supp:saddle_point_cond}
    \fx{\mathcal{L}}{\mb{x},\mb{u},\hat{\msb{\mu}}}\geq\fx{\mathcal{L}}{\hat{\mb{x}},\hat{\mb{u}},\hat{\msb{\mu}}}\geq\fx{\mathcal{L}}{\hat{\mb{x}},\hat{\mb{u}},\msb{\mu}}
\end{equation}
hold for any $\mb{x}$, $\mb{u}$, and $\msb{\mu}$, and the duality gap function:
\begin{equation} \label{eq:nien-16-rla-supp:def_duality_gap}
    \fx{\mathcal{G}}{\mb{x},\mb{u},\msb{\mu};\hat{\mb{x}},\hat{\mb{u}},\hat{\msb{\mu}}}
    \teq
    \fx{\mathcal{L}}{\mb{x},\mb{u},\hat{\msb{\mu}}}-\fx{\mathcal{L}}{\hat{\mb{x}},\hat{\mb{u}},\msb{\mu}}
    =
    \big[\fx{f}{\mb{x},\mb{u}}-\fx{f}{\hat{\mb{x}},\hat{\mb{u}}}\big]-\langle\hat{\msb{\mu}},\mb{Kx}+\mb{Bu}-\mb{b}\rangle
    \geq
    0
\end{equation}
characterizes the accuracy of an approximate solution $\left(\mb{x},\mb{u},\msb{\mu}\right)$ to the saddle-point problem \eqref{eq:nien-16-rla-supp:saddle_point_problem}. Note that $\mb{K}\hat{\mb{x}}+\mb{B}\hat{\mb{u}}-\mb{b}=\mb{0}$ due to the equality constraint. We consider the following (generalized alternating direction method of multipliers [ADMM]) iteration:
\begin{equation} \label{eq:nien-16-rla-supp:comp_admm}
    \begin{cases}
    \iter{\mb{x}}{k+1}\in\argmin{\mb{x}}{\fx{\phi}{\mb{x}}+\langle\fx{\nabla\psi}{\iter{\mb{x}}{k}},\mb{x}\rangle+\tfrac{1}{2}\norm{\mb{x}-\iter{\mb{x}}{k}}{\mb{D}_{\psi}}^2-\langle\iter{\msb{\mu}}{k},\mb{Kx}\rangle+\tfrac{\rho}{2}\norm{\mb{Kx}+\mb{B}\iter{\mb{u}}{k}-\mb{b}}{2}^2+\tfrac{1}{2}\norm{\mb{x}-\iter{\mb{x}}{k}}{\mb{P}}^2} \\
    \iter{\mb{u}}{k+1}\in\argmin{\mb{u}}{\fx{g}{\mb{u}}-\langle\iter{\msb{\mu}}{k},\mb{Bu}\rangle+\tfrac{\rho}{2}\norm{\alpha\mb{K}\iter{\mb{x}}{k+1}+\left(1-\alpha\right)\left(\mb{b}-\mb{B}\iter{\mb{u}}{k}\right)+\mb{Bu}-\mb{b}}{2}^2} \\
    \iter{\msb{\mu}}{k+1}=\iter{\msb{\mu}}{k}-\rho\left(\alpha\mb{K}\iter{\mb{x}}{k+1}+\left(1-\alpha\right)\left(\mb{b}-\mb{B}\iter{\mb{u}}{k}\right)+\mb{B}\iter{\mb{u}}{k+1}-\mb{b}\right)
    \end{cases}
\end{equation}
and show that the duality gap of the time-averaged solution $\mb{w}_K=\left(\mb{x}_K,\mb{u}_K,\msb{\mu}_K\right)$ it generates converges to zero at rate $\fx{\mathcal{O}}{1/K}$, where $K$ is the number of iterations,
\begin{equation} \label{eq:nien-16-rla-supp:def_time_avg}
    \mb{c}_K\teq\tfrac{1}{K}\ts\sum_{k=1}^{K}\iter{\mb{c}}{k}
\end{equation}
denotes the time-average of some iterate $\iter{\mb{c}}{k}$ for $k=1,\ldots,K$, $\rho>0$ is the corresponding AL penalty parameter, $\mb{P}\succeq0$ is a positive semi-definite weighting matrix, and $0<\alpha<2$ is the relaxation parameter.

\subsection{Preliminaries} \label{subsec:nien-16-rla-supp:preliminaries}
The convergence rate analysis of the iteration \eqref{eq:nien-16-rla-supp:comp_admm} is inspired by previous work \cite{he:12:oto,ouyang:13:sad,zhong:14:fsa,ouyang:15:aal,azadi:14:tao,fang:15:gad}. For simplicity, we use the following notations:
\begin{equation} \label{eq:nien-16-rla-supp:def_vec}
    \mb{w}
    \teq
    \begin{bmatrix}
    \mb{x} \\
    \mb{u} \\
    \msb{\mu}
    \end{bmatrix} \, ,
    \,
    \underline{\mb{w}}
    \teq
    \begin{bmatrix}
    \mb{x} \\
    \mb{u} \\
    \msb{\lambda}
    \end{bmatrix} \, ,
    \,
    \iter{\msb{\lambda}}{k+1}
    \teq
    \iter{\msb{\mu}}{k}-\rho\big(\mb{K}\iter{\mb{x}}{k+1}+\mb{B}\iter{\mb{u}}{k}-\mb{b}\big) \, ,
    \text{ and }
    \fx{F}{\underline{\mb{w}}}
    \teq
    \begin{bmatrix}
    -\mb{K}'\msb{\lambda} \\
    -\mb{B}'\msb{\lambda} \\
    \mb{Kx}+\mb{Bu}-\mb{b}
    \end{bmatrix} \, .
\end{equation}
We also introduce three matrices:
\begin{equation} \label{eq:nien-16-rla-supp:def_mtx}
    \mb{H}
    \teq
    \begin{bmatrix}
    \mb{D}_{\psi}+\mb{P} & 0 & 0 \\
    0 & \tfrac{\rho}{\alpha}\mb{B}'\mb{B} & \tfrac{1-\alpha}{\alpha}\mb{B}' \\
    0 & \tfrac{1-\alpha}{\alpha}\mb{B} & \tfrac{1}{\alpha\rho}\mb{I}
    \end{bmatrix} \, ,
    \,
    \mb{M}
    \teq
    \begin{bmatrix}
    \mb{I} & 0 & 0 \\
    0 & \mb{I} & 0 \\
    0 & -\rho\mb{B} & \alpha\mb{I}
    \end{bmatrix} \, ,
    \text{ and }
    \mb{Q}
    \teq
    \mb{HM}
    =
    \begin{bmatrix}
    \mb{D}_{\psi}+\mb{P} & 0 & 0 \\
    0 & \rho\mb{B}'\mb{B} & \left(1-\alpha\right)\mb{B}' \\
    0 & -\mb{B} & \tfrac{1}{\rho}\mb{I}
    \end{bmatrix} \, .
\end{equation}
The following lemmas show the properties of vectors and matrices defined in \eqref{eq:nien-16-rla-supp:def_vec} and \eqref{eq:nien-16-rla-supp:def_mtx} and an identity used in our derivation.
\begin{lemma} \label{lma:nien-16-rla-supp:psd_H}
The matrix $\mb{H}$ defined in \eqref{eq:nien-16-rla-supp:def_mtx} is positive semi-definite for any $0<\alpha<2$ and $\rho>0$.
\end{lemma}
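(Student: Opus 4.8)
The plan is to exploit the block structure of $\mb{H}$. Because the off-diagonal blocks linking the $\mb{x}$-component to the $(\mb{u},\msb{\mu})$-components all vanish, $\mb{H}$ is block-diagonal with diagonal blocks $\mb{D}_{\psi}+\mb{P}$ and
\[
    \mb{H}_0\teq\begin{bmatrix}\tfrac{\rho}{\alpha}\mb{B}'\mb{B} & \tfrac{1-\alpha}{\alpha}\mb{B}' \\[3pt] \tfrac{1-\alpha}{\alpha}\mb{B} & \tfrac{1}{\alpha\rho}\mb{I}\end{bmatrix} ,
\]
so it suffices to prove that each of these two blocks is positive semi-definite. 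The first block is immediate: $\mb{P}\succeq0$ by hypothesis and $\mb{D}_{\psi}\succeq0$ because it is the diagonal majorizing matrix from the $\mb{D}_{\psi}$-Lipschitz-gradient condition on $\psi$ recalled from \cite{nien:16:rla}; hence $\mb{D}_{\psi}+\mb{P}\succeq0$.

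For the remaining block I would evaluate its quadratic form on arbitrary vectors $\mb{u}$ and $\msb{\mu}$ of compatible dimension. Writing $\mb{v}\teq\mb{Bu}$, one gets
\[
    \begin{bmatrix}\mb{u}\\\msb{\mu}\end{bmatrix}'\mb{H}_0\begin{bmatrix}\mb{u}\\\msb{\mu}\end{bmatrix}
    =\tfrac{1}{\alpha}\Big(\rho\norm{\mb{v}}{2}^2+2\left(1-\alpha\right)\iprod{\mb{v}}{\msb{\mu}}+\tfrac{1}{\rho}\norm{\msb{\mu}}{2}^2\Big) ,
\]
and completing the square in $\mb{v}$ rewrites the parenthesized term as
\[
    \rho\Bnorm{\mb{v}+\tfrac{1-\alpha}{\rho}\msb{\mu}}{2}^2+\tfrac{1-\left(1-\alpha\right)^2}{\rho}\norm{\msb{\mu}}{2}^2
    =\rho\Bnorm{\mb{v}+\tfrac{1-\alpha}{\rho}\msb{\mu}}{2}^2+\tfrac{\alpha\left(2-\alpha\right)}{\rho}\norm{\msb{\mu}}{2}^2 .
\]
Both summands are nonnegative, since $\rho>0$ and $\alpha\left(2-\alpha\right)>0$ whenever $0<\alpha<2$; dividing by $\alpha>0$ shows the quadratic form is nonnegative for all $\mb{u}$ and $\msb{\mu}$, so $\mb{H}_0\succeq0$ and therefore $\mb{H}\succeq0$.

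The computation is elementary, so there is no real obstacle beyond bookkeeping: one has to recognize the block-diagonal splitting, recall that $\mb{D}_{\psi}\succeq0$ comes from the Lipschitz-gradient hypothesis, and carry the scalar factor $1/\alpha$ through. The only place the assumptions on the parameters are actually used is the last step, through $\rho>0$ together with the identity $1-\left(1-\alpha\right)^2=\alpha\left(2-\alpha\right)>0$ valid for $0<\alpha<2$.
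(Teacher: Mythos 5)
Your proof is correct and follows essentially the same route as the paper: both reduce the claim to the non-negativity of the quadratic form of the $(\mb{u},\msb{\mu})$ block and establish it by completing the square, using $\rho>0$ and $0<\alpha<2$ in the final step. The only cosmetic difference is the form of the square-completion --- you use the standard Schur-complement-style completion in $\mb{v}=\mb{Bu}$, yielding the remainder $\tfrac{\alpha(2-\alpha)}{\rho}\norm{\msb{\mu}}{2}^2$, whereas the paper splits the cross term symmetrically into $\left|1-\alpha\right|$ times a perfect square plus $\left(1-\left|1-\alpha\right|\right)$ times the sum of squares; both hinge on the same inequality $\left|1-\alpha\right|<1$.
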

\begin{proof} \label{pf:nien-16-rla-supp:proof_psd_H}
For any $\mb{w}$, completing the square yields
\begin{align}
    \mb{w}'\mb{H}\mb{w}
    &=
    \mb{x}'\left(\mb{D}_{\psi}+\mb{P}\right)\mb{x}+\tfrac{\rho}{\alpha}\mb{u}'\mb{B}'\mb{Bu}+\tfrac{2\left(1-\alpha\right)}{\alpha}\mb{u}'\mb{B}'\msb{\mu}+\tfrac{1}{\alpha\rho}\msb{\mu}'\msb{\mu} \nonumber \\
    &=
    \norm{\mb{x}}{\mb{D}_{\psi}+\mb{P}}^2+\tfrac{1}{\alpha}\big(\norm{\sqrt{\rho}\mb{Bu}}{2}^2+2\cdot\fx{\mathsf{sgn}}{1-\alpha}\left|1-\alpha\right|\left(\sqrt{\rho}\mb{Bu}\right)'\big(\tfrac{1}{\sqrt{\rho}}\msb{\mu}\big)+\bnorm{\tfrac{1}{\sqrt{\rho}}\msb{\mu}}{2}^2\big) \nonumber \\
    &=
    \norm{\mb{x}}{\mb{D}_{\psi}+\mb{P}}^2+\tfrac{1}{\alpha}\big(\left|1-\alpha\right|\bnorm{\sqrt{\rho}\mb{Bu}+\fx{\mathsf{sgn}}{1-\alpha}\tfrac{1}{\sqrt{\rho}}\msb{\mu}}{2}^2+\left(1-\left|1-\alpha\right|\right)\big(\norm{\sqrt{\rho}\mb{Bu}}{2}^2+\bnorm{\tfrac{1}{\sqrt{\rho}}\msb{\mu}}{2}^2\big)\big) \label{eq:nien-16-rla-supp:wHw_l3} \, .
\end{align}
All terms in \eqref{eq:nien-16-rla-supp:wHw_l3} are non-negative for any $0<\alpha<2$ and $\rho>0$. Thus under such conditions, $\mb{w}'\mb{H}\mb{w}\geq0$ for any $\mb{w}$, and $\mb{H}$ is positive semi-definite.
\end{proof}
\begin{lemma} \label{lma:nien-16-rla-supp:M_transition}
For any $k\geq0$, we have $\iter{\mb{w}}{k}-\iter{\mb{w}}{k+1}=\mb{M}\left(\iter{\mb{w}}{k}-\iter{\underline{\mb{w}}}{k+1}\right)$.
\end{lemma}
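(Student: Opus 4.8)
The plan is to verify the identity componentwise, since both sides are vectors partitioned conformally with $\mb{w}=\left(\mb{x},\mb{u},\msb{\mu}\right)$. Reading off $\mb{M}$ from \eqref{eq:nien-16-rla-supp:def_mtx}, its first two block rows are $\begin{bmatrix}\mb{I}&0&0\end{bmatrix}$ and $\begin{bmatrix}0&\mb{I}&0\end{bmatrix}$, so the $\mb{x}$- and $\mb{u}$-components of $\mb{M}\left(\iter{\mb{w}}{k}-\iter{\underline{\mb{w}}}{k+1}\right)$ are simply $\iter{\mb{x}}{k}-\iter{\mb{x}}{k+1}$ and $\iter{\mb{u}}{k}-\iter{\mb{u}}{k+1}$, which already match the corresponding components of $\iter{\mb{w}}{k}-\iter{\mb{w}}{k+1}$ (note $\underline{\mb{w}}$ differs from $\mb{w}$ only in the multiplier slot). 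Hence everything reduces to checking the third (multiplier) block.

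For that block, the third row of $\mb{M}$ is $\begin{bmatrix}0&-\rho\mb{B}&\alpha\mb{I}\end{bmatrix}$, so the claim becomes the scalar-free identity
\[
    \iter{\msb{\mu}}{k}-\iter{\msb{\mu}}{k+1}
    =
    -\rho\mb{B}\big(\iter{\mb{u}}{k}-\iter{\mb{u}}{k+1}\big)+\alpha\big(\iter{\msb{\mu}}{k}-\iter{\msb{\lambda}}{k+1}\big) \, .
\]
First I would substitute the definition of $\iter{\msb{\lambda}}{k+1}$ from \eqref{eq:nien-16-rla-supp:def_vec}, which gives $\iter{\msb{\mu}}{k}-\iter{\msb{\lambda}}{k+1}=\rho\big(\mb{K}\iter{\mb{x}}{k+1}+\mb{B}\iter{\mb{u}}{k}-\mb{b}\big)$, so the right-hand side is $\rho\alpha\big(\mb{K}\iter{\mb{x}}{k+1}+\mb{B}\iter{\mb{u}}{k}-\mb{b}\big)-\rho\mb{B}\iter{\mb{u}}{k}+\rho\mb{B}\iter{\mb{u}}{k+1}$. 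Collecting the $\mb{B}\iter{\mb{u}}{k}$ terms (coefficient $\rho\alpha-\rho=-\rho(1-\alpha)$) and rewriting $-\rho\alpha\mb{b}=\rho(1-\alpha)\mb{b}-\rho\mb{b}$, this equals $\rho\big(\alpha\mb{K}\iter{\mb{x}}{k+1}+(1-\alpha)(\mb{b}-\mb{B}\iter{\mb{u}}{k})+\mb{B}\iter{\mb{u}}{k+1}-\mb{b}\big)$, which is precisely $\iter{\msb{\mu}}{k}-\iter{\msb{\mu}}{k+1}$ by the $\msb{\mu}$-update in \eqref{eq:nien-16-rla-supp:comp_admm}. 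This closes the argument.

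There is no real conceptual obstacle here: the lemma is just the algebraic statement that the relaxed multiplier update in \eqref{eq:nien-16-rla-supp:comp_admm} can be expressed via the auxiliary (unrelaxed) multiplier $\iter{\msb{\lambda}}{k+1}$ through the fixed linear map $\mb{M}$. The only place to be careful is the bookkeeping of the relaxation parameter $\alpha$, the $(1-\alpha)$ factors, and the constant vector $\mb{b}$ when matching the two forms of the multiplier increment; rewriting $-\rho\alpha\mb{b}$ as $\rho(1-\alpha)\mb{b}-\rho\mb{b}$ is the step that makes the two expressions line up cleanly.
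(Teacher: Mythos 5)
Your proposal is correct and follows essentially the same route as the paper: both reduce the claim to the multiplier block and verify the algebraic identity relating the relaxed $\msb{\mu}$-update to $\iter{\msb{\lambda}}{k+1}$ via \eqref{eq:nien-16-rla-supp:lambda_minus_mu}, differing only in that you expand the claimed right-hand side into the $\msb{\mu}$-update form while the paper rearranges the $\msb{\mu}$-update into the claimed form. The bookkeeping of the $(1-\alpha)$ and $\mb{b}$ terms checks out.
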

\begin{proof} \label{pf:nien-16-rla-supp:proof_M_transition}
Since two stacked vectors ($\mb{x}$ and $\mb{u}$) of $\mb{w}$ and $\underline{\mb{w}}$ are the same, we need only show that $\iter{\msb{\mu}}{k}-\iter{\msb{\mu}}{k+1}$ is equal to $\alpha\big(\iter{\msb{\mu}}{k}-\iter{\msb{\lambda}}{k+1}\big)-\rho\mb{B}\big(\iter{\mb{u}}{k}-\iter{\mb{u}}{k+1}\big)$ for any $k\geq0$. By the definition of $\iter{\msb{\lambda}}{k+1}$ in \eqref{eq:nien-16-rla-supp:def_vec}, we have
\begin{equation} \label{eq:nien-16-rla-supp:lambda_minus_mu}
    \iter{\msb{\mu}}{k}-\iter{\msb{\lambda}}{k+1}
    =
    \rho\big(\mb{K}\iter{\mb{x}}{k+1}+\mb{B}\iter{\mb{u}}{k}-\mb{b}\big) \, .
\end{equation}
Then, by the definition of the $\msb{\mu}$-update in \eqref{eq:nien-16-rla-supp:comp_admm}, we get
\begin{align}
    \iter{\msb{\mu}}{k}-\iter{\msb{\mu}}{k+1}
    &=
    \rho\big(\alpha\mb{K}\iter{\mb{x}}{k+1}+\left(1-\alpha\right)\big(\mb{b}-\mb{B}\iter{\mb{u}}{k}\big)+\mb{B}\iter{\mb{u}}{k+1}-\mb{b}\big) \nonumber \\
    &=
    \rho\big(\alpha\big(\mb{K}\iter{\mb{x}}{k+1}+\mb{B}\iter{\mb{u}}{k}-\mb{b}\big)+\mb{B}\big(\iter{\mb{u}}{k+1}-\iter{\mb{u}}{k}\big)\big) \nonumber \\
    &=
    \alpha\big(\rho\big(\mb{K}\iter{\mb{x}}{k+1}+\mb{B}\iter{\mb{u}}{k}-\mb{b}\big)\big)-\rho\mb{B}\big(\iter{\mb{u}}{k}-\iter{\mb{u}}{k+1}\big) \nonumber \\
    &=
    \alpha\big(\iter{\msb{\mu}}{k}-\iter{\msb{\lambda}}{k+1}\big)-\rho\mb{B}\big(\iter{\mb{u}}{k}-\iter{\mb{u}}{k+1}\big) \, . \label{eq:nien-16-rla-supp:M_transition_l4}
\end{align}
Thus the lemma holds.
\end{proof}
\begin{lemma} \label{lma:nien-16-rla-supp:inner_prod_lemma}
For any positive semi-definite matrix $\mb{M}$ and vectors $\mb{x}_1$, $\mb{x}_2$, $\mb{x}_3$, and $\mb{x}_4$, we have
\begin{equation} \label{eq:nien-16-rla-supp:inner_prod_lemma}
    \left(\mb{x}_1-\mb{x}_2\right)'\mb{M}\left(\mb{x}_3-\mb{x}_4\right)=\tfrac{1}{2}\norm{\mb{x}_1-\mb{x}_4}{\mb{M}}^2-\tfrac{1}{2}\norm{\mb{x}_1-\mb{x}_3}{\mb{M}}^2+\tfrac{1}{2}\norm{\mb{x}_2-\mb{x}_3}{\mb{M}}^2-\tfrac{1}{2}\norm{\mb{x}_2-\mb{x}_4}{\mb{M}}^2 \, .
\end{equation}
\end{lemma}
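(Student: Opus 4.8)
The plan is to verify the identity \eqref{eq:nien-16-rla-supp:inner_prod_lemma} by expanding both sides in terms of the symmetric bilinear form $(\mb{a},\mb{b})\mapsto\mb{a}'\mb{M}\mb{b}$. Since $\mb{M}$ is positive semi-definite it is, in particular, symmetric, so this form is symmetric, i.e., $\mb{a}'\mb{M}\mb{b}=\mb{b}'\mb{M}\mb{a}$, and $\norm{\mb{a}}{\mb{M}}^2=\mb{a}'\mb{M}\mb{a}$. This symmetry is the only structural property of $\mb{M}$ actually used; positive semi-definiteness enters only implicitly through it (and through the fact that $\norm{\cdot}{\mb{M}}$ is then a genuine seminorm, which is the setting in which the identity is later applied).

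First I would rewrite the left-hand side using bilinearity: $\left(\mb{x}_1-\mb{x}_2\right)'\mb{M}\left(\mb{x}_3-\mb{x}_4\right)=\mb{x}_1'\mb{M}\mb{x}_3-\mb{x}_1'\mb{M}\mb{x}_4-\mb{x}_2'\mb{M}\mb{x}_3+\mb{x}_2'\mb{M}\mb{x}_4$. Next I would expand each of the four squared $\mb{M}$-norms on the right-hand side via $\norm{\mb{x}_i-\mb{x}_j}{\mb{M}}^2=\norm{\mb{x}_i}{\mb{M}}^2-2\,\mb{x}_i'\mb{M}\mb{x}_j+\norm{\mb{x}_j}{\mb{M}}^2$, where the cross term is simplified using $\mb{x}_i'\mb{M}\mb{x}_j=\mb{x}_j'\mb{M}\mb{x}_i$. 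Substituting these into $\tfrac12\norm{\mb{x}_1-\mb{x}_4}{\mb{M}}^2-\tfrac12\norm{\mb{x}_1-\mb{x}_3}{\mb{M}}^2+\tfrac12\norm{\mb{x}_2-\mb{x}_3}{\mb{M}}^2-\tfrac12\norm{\mb{x}_2-\mb{x}_4}{\mb{M}}^2$, each of the four ``diagonal'' contributions $\norm{\mb{x}_1}{\mb{M}}^2$, $\norm{\mb{x}_2}{\mb{M}}^2$, $\norm{\mb{x}_3}{\mb{M}}^2$, $\norm{\mb{x}_4}{\mb{M}}^2$ occurs once with a $+$ sign and once with a $-$ sign, so they cancel, and what remains is $\mb{x}_1'\mb{M}\mb{x}_3-\mb{x}_1'\mb{M}\mb{x}_4-\mb{x}_2'\mb{M}\mb{x}_3+\mb{x}_2'\mb{M}\mb{x}_4$, which coincides with the expression obtained from the left-hand side. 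This establishes the claim.

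There is no genuine obstacle here: this is the standard four-point (polarization) identity for the seminorm $\norm{\cdot}{\mb{M}}$, and the only points requiring any care are bookkeeping of the $\pm$ signs during the cancellation and the use of symmetry of $\mb{M}$ when combining the cross terms. As a consistency check one can specialize, e.g., to $\mb{x}_3=\mb{x}_4$, where both sides vanish, or to $\mb{x}_2=\mb{x}_4$, where the identity reduces to the familiar three-term polarization formula $\left(\mb{x}_1-\mb{x}_2\right)'\mb{M}\left(\mb{x}_3-\mb{x}_2\right)=\tfrac12\norm{\mb{x}_1-\mb{x}_2}{\mb{M}}^2-\tfrac12\norm{\mb{x}_1-\mb{x}_3}{\mb{M}}^2+\tfrac12\norm{\mb{x}_2-\mb{x}_3}{\mb{M}}^2$.
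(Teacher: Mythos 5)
Your proposal is correct and is exactly the verification the paper has in mind: it states that the identity ``can be verified by expanding out all the inner product and norms on both sides'' and omits the details, which you have simply carried out (the diagonal terms cancel and the cross terms match, using symmetry of $\mb{M}$). Nothing further is needed.
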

\begin{proof}
The proof is omitted here. It can be verified by expanding out all the inner product and norms on both sides.
\end{proof}

\subsection{Main results} \label{subsec:nien-16-rla-supp:main_results}
In the following theorem, we show that the duality gap defined in \eqref{eq:nien-16-rla-supp:def_duality_gap} of the time-averaged iterates $\mb{w}_K=\left(\mb{x}_K,\mb{u}_K,\msb{\mu}_K\right)$ in \eqref{eq:nien-16-rla-supp:comp_admm} converges at rate $\fx{\mathcal{O}}{1/K}$, where $K$ denotes the number of iterations.
\begin{theorem} \label{thm:nien-16-rla-supp:conv_rate_comp_admm}
Let $\mb{w}_K=\left(\mb{x}_K,\mb{u}_K,\msb{\mu}_K\right)$ be the time-averages of iterates in \eqref{eq:nien-16-rla-supp:comp_admm} where $\rho>0$, $0<\alpha<2$, and $\mb{P}$ is positive semi-definite. We have
\begin{multline} \label{eq:nien-16-rla-supp:conv_rate_comp_admm}
    \bfx{\mathcal{G}}{\mb{w}_K;\hat{\mb{w}}}=\big[\bfx{f}{\mb{x}_K,\mb{u}_K}-\fx{f}{\hat{\mb{x}},\hat{\mb{u}}}\big]-\langle\hat{\msb{\mu}},\mb{K}\mb{x}_K+\mb{B}\mb{u}_K-\mb{b}\rangle \\
    \leq
    \frac{1}{K}
    \left\{
    \tfrac{1}{2}\bnorm{\iter{\mb{x}}{0}-\hat{\mb{x}}}{\mb{D}_{\psi}}^2
    +
    \tfrac{1}{2}\bnorm{\iter{\mb{x}}{0}-\hat{\mb{x}}}{\mb{P}}^2
    +
    \tfrac{1}{2\alpha}\left[\sqrt{\rho}\bnorm{\mb{B}\big(\iter{\mb{u}}{0}-\hat{\mb{u}}\big)}{2}+\tfrac{1}{\sqrt{\rho}}\bnorm{\iter{\msb{\mu}}{0}-\hat{\msb{\mu}}}{2}\right]^2
    \right\} \, .
\end{multline}
\end{theorem}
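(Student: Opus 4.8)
The plan is to follow the now-standard variational-inequality route for ADMM-type analyses (as in \cite{he:12:oto,ouyang:15:aal}), organized around the three matrices $\mb{H}$, $\mb{M}$, $\mb{Q}=\mb{HM}$. The first step is to write the optimality conditions of the $\mb{x}$- and $\mb{u}$-subproblems in \eqref{eq:nien-16-rla-supp:comp_admm}. For the $\mb{x}$-update, since $\phi$ is handled exactly while $\psi$ is linearized, the subgradient inclusion gives, for all $\mb{x}$,
\begin{equation*}
    \bfx{\phi}{\mb{x}}-\bfx{\phi}{\iter{\mb{x}}{k+1}}+\biprod{\mb{x}-\iter{\mb{x}}{k+1}}{\fx{\nabla\psi}{\iter{\mb{x}}{k}}-\mb{K}'\iter{\msb{\mu}}{k}+\rho\mb{K}'\big(\mb{K}\iter{\mb{x}}{k+1}+\mb{B}\iter{\mb{u}}{k}-\mb{b}\big)+\left(\mb{D}_{\psi}+\mb{P}\right)\big(\iter{\mb{x}}{k+1}-\iter{\mb{x}}{k}\big)}\geq0 \, .
\end{equation*}
Using the definition of $\iter{\msb{\lambda}}{k+1}$ from \eqref{eq:nien-16-rla-supp:def_vec}, the multiplier terms collapse to $-\mb{K}'\iter{\msb{\lambda}}{k+1}$, which is the first block of $\fx{F}{\iter{\underline{\mb{w}}}{k+1}}$; likewise, the $\psi$-linearization error is controlled by $\mb{D}_\psi$-Lipschitz continuity of $\nabla\psi$, namely $\biprod{\mb{x}-\iter{\mb{x}}{k+1}}{\fx{\nabla\psi}{\iter{\mb{x}}{k}}}\geq\bfx{\psi}{\mb{x}}-\bfx{\psi}{\iter{\mb{x}}{k+1}}-\tfrac12\bnorm{\iter{\mb{x}}{k+1}-\iter{\mb{x}}{k}}{\mb{D}_\psi}^2$ (a descent-lemma-type bound). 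The $\mb{u}$-update is handled similarly: writing out its optimality condition and substituting the $\msb{\mu}$-update, the relaxed combination $\alpha\mb{K}\iter{\mb{x}}{k+1}+(1-\alpha)(\mb{b}-\mb{B}\iter{\mb{u}}{k})$ is exactly engineered so that the residual terms reduce to $-\mb{B}'\iter{\msb{\lambda}}{k+1}$ plus a $\tfrac{\rho}{\alpha}\mb{B}'\mb{B}$-weighted and $\tfrac{1-\alpha}{\alpha}\mb{B}'$-coupled correction — precisely the lower-right $2\times2$ block of $\mb{H}$ acting on $\iter{\mb{w}}{k}-\iter{\underline{\mb{w}}}{k+1}$.

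The second step is to assemble these two inequalities together with the trivial multiplier identity $\biprod{\msb{\mu}-\iter{\msb{\mu}}{k+1}}{\mb{K}\iter{\mb{x}}{k+1}+\mb{B}\iter{\mb{u}}{k+1}-\mb{b}-\tfrac1\rho(\iter{\msb{\mu}}{k}-\iter{\msb{\mu}}{k+1})}=0$ into a single variational inequality of the form
\begin{equation*}
    \bfx{f}{\mb{x},\mb{u}}-\bfx{f}{\iter{\mb{x}}{k+1},\iter{\mb{u}}{k+1}}+\biprod{\mb{w}-\iter{\mb{w}}{k+1}}{\fx{F}{\iter{\underline{\mb{w}}}{k+1}}}\geq\biprod{\mb{w}-\iter{\mb{w}}{k+1}}{\mb{H}\big(\iter{\mb{w}}{k}-\iter{\underline{\mb{w}}}{k+1}\big)}-\tfrac12\bnorm{\iter{\mb{x}}{k+1}-\iter{\mb{x}}{k}}{\mb{D}_\psi}^2 \, ,
\end{equation*}
valid for every $\mb{w}$. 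Here one uses Lemma~\ref{lma:nien-16-rla-supp:M_transition} to rewrite $\iter{\mb{w}}{k}-\iter{\underline{\mb{w}}}{k+1}=\mb{M}^{-1}(\iter{\mb{w}}{k}-\iter{\mb{w}}{k+1})$ where needed, so that $\mb{H}\mb{M}=\mb{Q}$ appears; the key algebraic check is that the block structure of $\mb{Q}$ in \eqref{eq:nien-16-rla-supp:def_mtx} matches the collected cross terms. Then I apply Lemma~\ref{lma:nien-16-rla-supp:inner_prod_lemma} to the right-hand side inner product (with $\mb{M}\to\mb{H}$ and the four vectors $\mb{w},\iter{\mb{w}}{k+1},\iter{\mb{w}}{k},\iter{\mb{w}}{k+1}$, suitably chosen) to turn $\biprod{\mb{w}-\iter{\mb{w}}{k+1}}{\mb{H}(\iter{\mb{w}}{k}-\iter{\mb{w}}{k+1})}$ into a telescoping difference $\tfrac12\snorm{\mb{w}-\iter{\mb{w}}{k}}{\mb{H}}^2-\tfrac12\snorm{\mb{w}-\iter{\mb{w}}{k+1}}{\mb{H}}^2$ minus a nonnegative remainder $\tfrac12\snorm{\iter{\mb{w}}{k}-\iter{\mb{w}}{k+1}}{\mb{H}}^2$; here Lemma~\ref{lma:nien-16-rla-supp:psd_H} guarantees all these $\mb{H}$-seminorms are well-defined and the remainder has the right sign. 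One then checks that this $\mb{H}$-remainder dominates the stray $-\tfrac12\snorm{\iter{\mb{x}}{k+1}-\iter{\mb{x}}{k}}{\mb{D}_\psi}^2$ term (the $\mb{x}$-block of $\mb{H}$ is $\mb{D}_\psi+\mb{P}\succeq\mb{D}_\psi$), so the combined non-telescoping terms can be dropped.

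The third step is summation and convexity. Setting $\mb{w}=\hat{\mb{w}}=(\hat{\mb{x}},\hat{\mb{u}},\hat{\msb{\mu}})$ and summing the per-iteration inequality over $k=0,\ldots,K-1$, the $\mb{H}$-norm terms telescope to at most $\tfrac12\snorm{\hat{\mb{w}}-\iter{\mb{w}}{0}}{\mb{H}}^2$, and using $\mb{K}\hat{\mb{x}}+\mb{B}\hat{\mb{u}}-\mb{b}=\mb{0}$ one identifies $\biprod{\iter{\mb{w}}{k+1}-\hat{\mb{w}}}{\fx{F}{\iter{\underline{\mb{w}}}{k+1}}}$ with $\snorm{\mb{K}\iter{\mb{x}}{k+1}+\mb{B}\iter{\mb{u}}{k+1}-\mb{b}}{}$-type terms matching the gap. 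Dividing by $K$, invoking convexity of $f$ and linearity of the constraint so that $\tfrac1K\sum_k[\fx{f}{\iter{\mb{x}}{k+1},\iter{\mb{u}}{k+1}}-\ldots]\geq[\fx{f}{\mb{x}_K,\mb{u}_K}-\ldots]$ (Jensen), and finally expanding $\tfrac12\snorm{\hat{\mb{w}}-\iter{\mb{w}}{0}}{\mb{H}}^2$ using the completed-square form \eqref{eq:nien-16-rla-supp:wHw_l3} — which, evaluated at $|1-\alpha|$ absorbed, yields exactly $\tfrac12\snorm{\iter{\mb{x}}{0}-\hat{\mb{x}}}{\mb{D}_\psi}^2+\tfrac12\snorm{\iter{\mb{x}}{0}-\hat{\mb{x}}}{\mb{P}}^2+\tfrac1{2\alpha}[\sqrt\rho\snorm{\mb{B}(\iter{\mb{u}}{0}-\hat{\mb{u}})}{2}+\tfrac1{\sqrt\rho}\snorm{\iter{\msb{\mu}}{0}-\hat{\msb{\mu}}}{2}]^2$ after bounding the $(1-|1-\alpha|)$ cross-term by the perfect-square term via $2ab\le a^2+b^2$ — gives \eqref{eq:nien-16-rla-supp:conv_rate_comp_admm}.

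I expect the main obstacle to be the \emph{second step}: correctly collecting the multiplier and penalty cross-terms from the two subproblem optimality conditions and verifying, block by block, that they equal $\biprod{\mb{w}-\iter{\mb{w}}{k+1}}{\mb{Q}(\iter{\mb{w}}{k}-\iter{\underline{\mb{w}}}{k+1})}$ with $\mb{Q}=\mb{HM}$ as in \eqref{eq:nien-16-rla-supp:def_mtx}. The relaxation parameter $\alpha$ enters nonlinearly through the $(1-\alpha)$-weighted combination in both the $\mb{u}$- and $\msb{\mu}$-updates, and keeping track of which terms land on $\iter{\msb{\mu}}{k}$ versus $\iter{\msb{\lambda}}{k+1}$ versus $\iter{\mb{u}}{k}-\iter{\mb{u}}{k+1}$ is where Lemma~\ref{lma:nien-16-rla-supp:M_transition} does the decisive bookkeeping; a secondary subtlety is ensuring the $\mb{D}_\psi$-linearization slack is genuinely absorbed by the $\mb{H}$-distance remainder rather than merely bounded heuristically.
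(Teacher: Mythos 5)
Your overall architecture matches the paper's (a variational inequality assembled from the subproblem optimality conditions, reorganized through $\mb{H}$, $\mb{M}$, $\mb{Q}=\mb{HM}$, telescoped, then Jensen), but the execution of your second step breaks down exactly at the point you flagged, and not just as bookkeeping. First, your ``trivial multiplier identity'' $\biprod{\msb{\mu}-\iter{\msb{\mu}}{k+1}}{\mb{K}\iter{\mb{x}}{k+1}+\mb{B}\iter{\mb{u}}{k+1}-\mb{b}-\tfrac{1}{\rho}\big(\iter{\msb{\mu}}{k}-\iter{\msb{\mu}}{k+1}\big)}=0$ is false for $\alpha\neq1$: the $\msb{\mu}$-update gives $\tfrac{1}{\rho}\big(\iter{\msb{\mu}}{k}-\iter{\msb{\mu}}{k+1}\big)=\alpha\mb{K}\iter{\mb{x}}{k+1}+(1-\alpha)\big(\mb{b}-\mb{B}\iter{\mb{u}}{k}\big)+\mb{B}\iter{\mb{u}}{k+1}-\mb{b}$, which differs from $\mb{K}\iter{\mb{x}}{k+1}+\mb{B}\iter{\mb{u}}{k+1}-\mb{b}$ by $(1-\alpha)\big(\mb{K}\iter{\mb{x}}{k+1}+\mb{B}\iter{\mb{u}}{k}-\mb{b}\big)$. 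Since relaxation is the entire point of the theorem, this cannot be waved away. Second, and relatedly, you put $\iter{\msb{\mu}}{k+1}$ in the multiplier slot of the test vector, whereas the $\mb{x}$-optimality condition naturally produces $-\mb{K}'\iter{\msb{\lambda}}{k+1}$ with $\iter{\msb{\lambda}}{k+1}=\iter{\msb{\mu}}{k}-\rho\big(\mb{K}\iter{\mb{x}}{k+1}+\mb{B}\iter{\mb{u}}{k}-\mb{b}\big)$; the skew-symmetry cancellation that turns $\siprod{\cdot}{F(\cdot)}$ into the duality gap at $\mb{w}=\hat{\mb{w}}$ requires the \emph{same} multiplier in the difference vector and in $F$, so the correct object is $\iter{\underline{\mb{w}}}{k+1}-\mb{w}$ (third component $\iter{\msb{\lambda}}{k+1}-\msb{\mu}$), and the correct third row is the identity $\siprod{\iter{\msb{\lambda}}{k+1}-\msb{\mu}}{\mb{K}\iter{\mb{x}}{k+1}+\mb{B}\iter{\mb{u}}{k+1}-\mb{b}}=\siprod{\iter{\msb{\lambda}}{k+1}-\msb{\mu}}{-\mb{B}\big(\iter{\mb{u}}{k}-\iter{\mb{u}}{k+1}\big)+\tfrac{1}{\rho}\big(\iter{\msb{\mu}}{k}-\iter{\msb{\lambda}}{k+1}\big)}$.

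The consequence is that the clean telescoping you describe --- $\tfrac12\snorm{\mb{w}-\iter{\mb{w}}{k}}{\mb{H}}^2-\tfrac12\snorm{\mb{w}-\iter{\mb{w}}{k+1}}{\mb{H}}^2$ plus a single remainder $\tfrac12\snorm{\iter{\mb{w}}{k}-\iter{\mb{w}}{k+1}}{\mb{H}}^2$ absorbed by $\mb{D}_{\psi}+\mb{P}\succeq\mb{D}_{\psi}$ --- does not arise. Applying Lemma~\ref{lma:nien-16-rla-supp:inner_prod_lemma} to the correct inner product $\siprod{\iter{\underline{\mb{w}}}{k+1}-\mb{w}}{\mb{H}\big(\iter{\mb{w}}{k}-\iter{\mb{w}}{k+1}\big)}$ leaves two non-telescoping terms, $\tfrac12\snorm{\iter{\underline{\mb{w}}}{k+1}-\iter{\mb{w}}{k+1}}{\mb{H}}^2-\tfrac12\snorm{\iter{\underline{\mb{w}}}{k+1}-\iter{\mb{w}}{k}}{\mb{H}}^2$, and showing that these together with the $\tfrac12\snorm{\iter{\mb{x}}{k+1}-\iter{\mb{x}}{k}}{\mb{D}_{\psi}}^2$ linearization slack are non-positive requires an explicit computation: after substituting $\iter{\msb{\mu}}{k+1}=\iter{\msb{\lambda}}{k+1}+(1-\alpha)\big(\iter{\msb{\mu}}{k}-\iter{\msb{\lambda}}{k+1}\big)+\rho\mb{B}\big(\iter{\mb{u}}{k}-\iter{\mb{u}}{k+1}\big)$, the completed squares cancel and what remains is $-\tfrac12\snorm{\iter{\mb{x}}{k+1}-\iter{\mb{x}}{k}}{\mb{P}}^2$ minus a non-negative multiple, proportional to $2-\alpha$, of $\snorm{\iter{\msb{\lambda}}{k+1}-\iter{\msb{\mu}}{k}}{2}^2$. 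That computation is where the hypothesis $\alpha<2$ actually enters the per-iteration bound, and it is absent from your argument. Your first and third steps (the $\mb{D}_{\psi}$ descent bound for the linearized $\psi$, the summation and Jensen step, and the final Cauchy--Schwarz/$|1-\alpha|\le1$ bound on $\tfrac12\snorm{\iter{\mb{w}}{0}-\hat{\mb{w}}}{\mb{H}}^2$) are in line with the paper; the gap is the multiplier row and the resulting remainder analysis.
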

\begin{proof} \label{pf:nien-16-rla-supp:proof_main}
We first focus on the $\mb{x}$-update in \eqref{eq:nien-16-rla-supp:comp_admm}. By the convexity of $\psi$, we have
\begin{align}
    \bfx{\psi}{\iter{\mb{x}}{k+1}}
    &\leq
    \bfx{\psi}{\iter{\mb{x}}{k}}+\langle\bfx{\nabla\psi}{\iter{\mb{x}}{k}},\iter{\mb{x}}{k+1}-\iter{\mb{x}}{k}\rangle+\tfrac{1}{2}\bnorm{\iter{\mb{x}}{k+1}-\iter{\mb{x}}{k}}{\mb{D}_{\psi}}^2 \nonumber \\
    &=
    \bfx{\psi}{\iter{\mb{x}}{k}}+\langle\bfx{\nabla\psi}{\iter{\mb{x}}{k}},\mb{x}-\iter{\mb{x}}{k}\rangle+\langle\bfx{\nabla\psi}{\iter{\mb{x}}{k}},\iter{\mb{x}}{k+1}-\mb{x}\rangle+\tfrac{1}{2}\bnorm{\iter{\mb{x}}{k+1}-\iter{\mb{x}}{k}}{\mb{D}_{\psi}}^2 \nonumber \\
    &\leq
    \fx{\psi}{\mb{x}}+\langle\bfx{\nabla\psi}{\iter{\mb{x}}{k}},\iter{\mb{x}}{k+1}-\mb{x}\rangle+\tfrac{1}{2}\bnorm{\iter{\mb{x}}{k+1}-\iter{\mb{x}}{k}}{\mb{D}_{\psi}}^2 \label{eq:nien-16-rla-supp:cvx_psi}
\end{align}
for any $\mb{x}$. Moving $\fx{\psi}{\mb{x}}$ to the left-hand side leads to
\begin{equation} \label{eq:nien-16-rla-supp:psi_ineq}
    \bfx{\psi}{\iter{\mb{x}}{k+1}}-\fx{\psi}{\mb{x}}
    \leq
    \langle\bfx{\nabla\psi}{\iter{\mb{x}}{k}},\iter{\mb{x}}{k+1}-\mb{x}\rangle+\tfrac{1}{2}\bnorm{\iter{\mb{x}}{k+1}-\iter{\mb{x}}{k}}{\mb{D}_{\psi}}^2 \, .
\end{equation}
Moreover, by the optimality condition of the $\mb{x}$-update in \eqref{eq:nien-16-rla-supp:comp_admm}, we have
\begin{equation} \label{eq:nien-16-rla-supp:opt_cond_x_update}
    \bfx{\partial\phi}{\iter{\mb{x}}{k+1}}+\bfx{\nabla\psi}{\iter{\mb{x}}{k}}+\mb{D}_{\psi}\big(\iter{\mb{x}}{k+1}-\iter{\mb{x}}{k}\big)-\mb{K}'\big(\iter{\msb{\mu}}{k}-\rho\big(\mb{K}\iter{\mb{x}}{k+1}+\mb{B}\iter{\mb{u}}{k}-\mb{b}\big)\big)+\mb{P}\big(\iter{\mb{x}}{k+1}-\iter{\mb{x}}{k}\big)\ni\mb{0} \, ,
\end{equation}
so
\begin{equation} \label{eq:nien-16-rla-supp:sub_grad_phi}
    \bfx{\partial\phi}{\iter{\mb{x}}{k+1}}
    \ni
    -\bfx{\nabla\psi}{\iter{\mb{x}}{k}}-\mb{D}_{\psi}\big(\iter{\mb{x}}{k+1}-\iter{\mb{x}}{k}\big)+\mb{K}'\iter{\msb{\lambda}}{k+1}-\mb{P}\big(\iter{\mb{x}}{k+1}-\iter{\mb{x}}{k}\big) \, .
\end{equation}
By the definition of subgradient for the convex function $\phi$, it follows that
\begin{align}
    \fx{\phi}{\mb{x}}
    &\geq
    \bfx{\phi}{\iter{\mb{x}}{k+1}}
    +
    \langle\bfx{\partial\phi}{\iter{\mb{x}}{k+1}},\mb{x}-\iter{\mb{x}}{k+1}\rangle \nonumber \\
    &=
    \bfx{\phi}{\iter{\mb{x}}{k+1}}
    +
    \langle\iter{\mb{x}}{k+1}-\mb{x},-\mb{K}'\iter{\msb{\lambda}}{k+1}\rangle
    +
    \langle\bfx{\nabla\psi}{\iter{\mb{x}}{k}},\iter{\mb{x}}{k+1}-\mb{x}\rangle
    +
    \langle\iter{\mb{x}}{k+1}-\mb{x},\left(\mb{D}_{\psi}+\mb{P}\right)\big(\iter{\mb{x}}{k+1}-\iter{\mb{x}}{k}\big)\rangle \label{eq:nien-16-rla-supp:first_order_cond_phi}
\end{align}
for all $\mb{x}$. Rearranging \eqref{eq:nien-16-rla-supp:first_order_cond_phi} leads to
\begin{multline} \label{eq:nien-16-rla-supp:phi_ineq}
    \big[\bfx{\phi}{\iter{\mb{x}}{k+1}}-\fx{\phi}{\mb{x}}\big]+\langle\iter{\mb{x}}{k+1}-\mb{x},-\mb{K}'\iter{\msb{\lambda}}{k+1}\rangle \\
    \leq
    -\langle\bfx{\nabla\psi}{\iter{\mb{x}}{k}},\iter{\mb{x}}{k+1}-\mb{x}\rangle
    +
    \langle\iter{\mb{x}}{k+1}-\mb{x},\left(\mb{D}_{\psi}+\mb{P}\right)\big(\iter{\mb{x}}{k}-\iter{\mb{x}}{k+1}\big)\rangle \, .
\end{multline}
Summing \eqref{eq:nien-16-rla-supp:psi_ineq} and \eqref{eq:nien-16-rla-supp:phi_ineq}, we get the first inequality:
\begin{equation} \label{eq:nien-16-rla-supp:x_update_ineq}
    \big[\bfx{h}{\iter{\mb{x}}{k+1}}-\fx{h}{\mb{x}}\big]+\langle\iter{\mb{x}}{k+1}-\mb{x},-\mb{K}'\iter{\msb{\lambda}}{k+1}\rangle
    \leq
    \langle\iter{\mb{x}}{k+1}-\mb{x},\left(\mb{D}_{\psi}+\mb{P}\right)\big(\iter{\mb{x}}{k}-\iter{\mb{x}}{k+1}\big)\rangle+\tfrac{1}{2}\bnorm{\iter{\mb{x}}{k+1}-\iter{\mb{x}}{k}}{\mb{D}_{\psi}}^2 \, .
\end{equation}

Following the same procedure, by the optimality condition of the $\mb{u}$-update in \eqref{eq:nien-16-rla-supp:comp_admm}, we have
\begin{equation} \label{eq:nien-16-rla-supp:first_order_g_ineq}
    \fx{g}{\mb{u}}
    \geq
    \bfx{g}{\iter{\mb{u}}{k+1}}
    +
    \langle\bfx{\partial g}{\iter{\mb{u}}{k+1}},\mb{u}-\iter{\mb{u}}{k+1}\rangle
    =
    \bfx{g}{\iter{\mb{u}}{k+1}}
    +
    \langle\iter{\mb{u}}{k+1}-\mb{u},-\mb{B}'\iter{\msb{\mu}}{k+1}\rangle
\end{equation}
for any $\mb{u}$. To substitute $\iter{\msb{\mu}}{k+1}$ in \eqref{eq:nien-16-rla-supp:first_order_g_ineq}, subtracting and adding $\iter{\msb{\lambda}}{k+1}$ on the left-hand side of \eqref{eq:nien-16-rla-supp:M_transition_l4} and rearranging it yield
\begin{equation} \label{eq:nien-16-rla-supp:lambda_eq}
    \iter{\msb{\mu}}{k+1}
    =
    \iter{\msb{\lambda}}{k+1}
    +
    \left(1-\alpha\right)\big(\iter{\msb{\mu}}{k}-\iter{\msb{\lambda}}{k+1}\big)
    +
    \rho\mb{B}\big(\iter{\mb{u}}{k}-\iter{\mb{u}}{k+1}\big) \, .
\end{equation}
Substituting \eqref{eq:nien-16-rla-supp:lambda_eq} into \eqref{eq:nien-16-rla-supp:first_order_g_ineq} and rearranging it, we get the second inequality:
\begin{equation} \label{eq:nien-16-rla-supp:u_update_ineq}
    \big[\bfx{g}{\iter{\mb{u}}{k+1}}-\fx{g}{\mb{u}}\big]+\langle\iter{\mb{u}}{k+1}-\mb{u},-\mb{B}'\iter{\msb{\lambda}}{k+1}\rangle
    \leq
    \langle\iter{\mb{u}}{k+1}-\mb{u},\rho\mb{B}'\mb{B}\big(\iter{\mb{u}}{k}-\iter{\mb{u}}{k+1}\big)+\left(1-\alpha\right)\mb{B}'\big(\iter{\msb{\mu}}{k}-\iter{\msb{\lambda}}{k+1}\big)\rangle \, .
\end{equation}
The third step differes a bit from the previous ones because the $\msb{\mu}$-update in \eqref{eq:nien-16-rla-supp:comp_admm} is not a minimization problem. By \eqref{eq:nien-16-rla-supp:lambda_minus_mu}, we have
\begin{equation}
    \mb{K}\iter{\mb{x}}{k+1}+\mb{B}\iter{\mb{u}}{k+1}-\mb{b}
    =
    -\mb{B}\big(\iter{\mb{u}}{k}-\iter{\mb{u}}{k+1}\big)+\tfrac{1}{\rho}\big(\iter{\msb{\mu}}{k}-\iter{\msb{\lambda}}{k+1}\big) \, .
\end{equation}
This gives the third equality:
\begin{equation} \label{eq:nien-16-rla-supp:lambda_update_eq}
    \langle\iter{\msb{\lambda}}{k+1}-\msb{\mu},\mb{K}\iter{\mb{x}}{k+1}+\mb{B}\iter{\mb{u}}{k+1}-\mb{b}\rangle
    =
    \langle\iter{\msb{\lambda}}{k+1}-\msb{\mu},-\mb{B}\big(\iter{\mb{u}}{k}-\iter{\mb{u}}{k+1}\big)+\tfrac{1}{\rho}\big(\iter{\msb{\mu}}{k}-\iter{\msb{\lambda}}{k+1}\big)\rangle
\end{equation}
for any $\msb{\mu}$. Summing \eqref{eq:nien-16-rla-supp:x_update_ineq}, \eqref{eq:nien-16-rla-supp:u_update_ineq}, and \eqref{eq:nien-16-rla-supp:lambda_update_eq}, we can write it compactly as
\begin{equation} \label{eq:nien-16-rla-supp:vi_ineq}
    \big[\bfx{f}{\iter{\mb{x}}{k+1},\iter{\mb{u}}{k+1}}-\fx{f}{\mb{x},\mb{u}}\big]
    +
    \langle\iter{\underline{\mb{w}}}{k+1}-\mb{w},\bfx{F}{\iter{\underline{\mb{w}}}{k+1}}\rangle
    \leq
    \langle\iter{\underline{\mb{w}}}{k+1}-\mb{w},\mb{Q}\big(\iter{\mb{w}}{k}-\iter{\underline{\mb{w}}}{k+1}\big)\rangle
    +
    \tfrac{1}{2}\bnorm{\iter{\mb{x}}{k+1}-\iter{\mb{x}}{k}}{\mb{D}_{\psi}}^2 \, .
\end{equation}
By Lemma~\ref{lma:nien-16-rla-supp:M_transition} (note that $\mb{Q}=\mb{HM}$) and Lemma~\ref{lma:nien-16-rla-supp:inner_prod_lemma}, the first term on the right-hand side of \eqref{eq:nien-16-rla-supp:vi_ineq} can be expressed as
\begin{multline} \label{eq:nien-16-rla-supp:rhs_identity}
    \langle\iter{\underline{\mb{w}}}{k+1}-\mb{w},\mb{H}\big(\iter{\mb{w}}{k}-\iter{\mb{w}}{k+1}\big)\rangle \\
    =
    \tfrac{1}{2}\bnorm{\iter{\underline{\mb{w}}}{k+1}-\iter{\mb{w}}{k+1}}{\mb{H}}^2-\tfrac{1}{2}\bnorm{\iter{\underline{\mb{w}}}{k+1}-\iter{\mb{w}}{k}}{\mb{H}}^2
    +
    \tfrac{1}{2}\bnorm{\iter{\mb{w}}{k}-\mb{w}}{\mb{H}}^2-\tfrac{1}{2}\bnorm{\iter{\mb{w}}{k+1}-\mb{w}}{\mb{H}}^2 \, .
\end{multline}
Moreover, the first term on the right-hand side of \eqref{eq:nien-16-rla-supp:rhs_identity} is
\begin{equation} \label{eq:nien-16-rla-supp:first_term_on_rhs}
    \tfrac{1}{\alpha\rho}\bnorm{\iter{\msb{\lambda}}{k+1}-\iter{\msb{\mu}}{k+1}}{2}^2
    =
    \tfrac{1}{\alpha\rho}\bnorm{\rho\mb{B}\big(\iter{\mb{u}}{k+1}-\iter{\mb{u}}{k}\big)+\left(1-\alpha\right)\big(\iter{\msb{\lambda}}{k+1}-\iter{\msb{\mu}}{k}\big)}{2}^2
\end{equation}
by \eqref{eq:nien-16-rla-supp:lambda_eq}, and the second term on the right-hand side of \eqref{eq:nien-16-rla-supp:rhs_identity} is
\begin{align}
    &\tfrac{1}{2}\bnorm{\iter{\mb{x}}{k+1}-\iter{\mb{x}}{k}}{\mb{D}_{\psi}+\mb{P}}^2 \nonumber \\
    &\quad+
    \tfrac{1}{\alpha\rho}\bnorm{\rho\mb{B}\big(\iter{\mb{u}}{k+1}-\iter{\mb{u}}{k}\big)}{2}^2
    +
    \tfrac{2\left(1-\alpha\right)}{\alpha\rho}\langle\rho\mb{B}\big(\iter{\mb{u}}{k+1}-\iter{\mb{u}}{k}\big),\iter{\msb{\lambda}}{k+1}-\iter{\msb{\mu}}{k}\rangle
    +
    \tfrac{1}{\alpha\rho}\bnorm{\iter{\msb{\lambda}}{k+1}-\iter{\msb{\mu}}{k}}{2}^2 \nonumber \\
    &\qquad=
    \tfrac{1}{2}\bnorm{\iter{\mb{x}}{k+1}-\iter{\mb{x}}{k}}{\mb{D}_{\psi}+\mb{P}}^2
    +
    \tfrac{1}{\alpha\rho}\bnorm{\rho\mb{B}\big(\iter{\mb{u}}{k+1}-\iter{\mb{u}}{k}\big)+\left(1-\alpha\right)\big(\iter{\msb{\lambda}}{k+1}-\iter{\msb{\mu}}{k}\big)}{2}^2
    +
    \tfrac{2-\alpha}{\rho}\bnorm{\iter{\msb{\lambda}}{k+1}-\iter{\msb{\mu}}{k}}{2}^2 \, . \label{eq:nien-16-rla-supp:second_term_on_rhs}
\end{align}
Substituting \eqref{eq:nien-16-rla-supp:first_term_on_rhs} and \eqref{eq:nien-16-rla-supp:second_term_on_rhs} into \eqref{eq:nien-16-rla-supp:rhs_identity}, we can upper bound the inequality \eqref{eq:nien-16-rla-supp:vi_ineq} by
\begin{align}
    &\big[\bfx{f}{\iter{\mb{x}}{k+1},\iter{\mb{u}}{k+1}}-\fx{f}{\mb{x},\mb{u}}\big]
    +
    \langle\iter{\underline{\mb{w}}}{k+1}-\mb{w},\bfx{F}{\iter{\underline{\mb{w}}}{k+1}}\rangle \nonumber \\
    &\qquad\leq
    \tfrac{1}{2}\bnorm{\iter{\mb{w}}{k}-\mb{w}}{\mb{H}}^2-\tfrac{1}{2}\bnorm{\iter{\mb{w}}{k+1}-\mb{w}}{\mb{H}}^2
    -
    \tfrac{1}{2}\bnorm{\iter{\mb{x}}{k+1}-\iter{\mb{x}}{k}}{\mb{D}_{\psi}+\mb{P}}^2
    -
    \tfrac{2-\alpha}{\rho}\bnorm{\iter{\msb{\lambda}}{k+1}-\iter{\msb{\mu}}{k}}{2}^2
    +
    \tfrac{1}{2}\bnorm{\iter{\mb{x}}{k+1}-\iter{\mb{x}}{k}}{\mb{D}_{\psi}}^2 \nonumber \\
    &\qquad\leq
    \tfrac{1}{2}\bnorm{\iter{\mb{w}}{k}-\mb{w}}{\mb{H}}^2-\tfrac{1}{2}\bnorm{\iter{\mb{w}}{k+1}-\mb{w}}{\mb{H}}^2
    -
    \tfrac{1}{2}\bnorm{\iter{\mb{x}}{k+1}-\iter{\mb{x}}{k}}{\mb{P}}^2
    -
    \tfrac{2-\alpha}{\rho}\bnorm{\iter{\msb{\lambda}}{k+1}-\iter{\msb{\mu}}{k}}{2}^2 \nonumber \\
    &\qquad\leq
    \tfrac{1}{2}\bnorm{\iter{\mb{w}}{k}-\mb{w}}{\mb{H}}^2-\tfrac{1}{2}\bnorm{\iter{\mb{w}}{k+1}-\mb{w}}{\mb{H}}^2 \label{eq:nien-16-rla-supp:vi_final}
\end{align}
because $\mb{P}$ is positive semi-definite and $2-\alpha>0$ for $\alpha\in(0,2)$.

To show the convergence rate of \eqref{eq:nien-16-rla-supp:comp_admm}, let $\left(\mb{x},\mb{u},\msb{\mu}\right)=\hat{\mb{w}}\teq\left(\hat{\mb{x}},\hat{\mb{u}},\hat{\msb{\mu}}\right)$. The last term on the left-hand side of \eqref{eq:nien-16-rla-supp:vi_final} can be represented as
\begin{align}
    &
    \langle\iter{\underline{\mb{w}}}{k+1}-\hat{\mb{w}},\bfx{F}{\iter{\underline{\mb{w}}}{k+1}}\rangle \nonumber \\
    &\qquad=
    \langle\iter{\mb{x}}{k+1}-\hat{\mb{x}},-\mb{K}'\iter{\msb{\lambda}}{k+1}\rangle
    +
    \langle\iter{\mb{u}}{k+1}-\hat{\mb{u}},-\mb{B}'\iter{\msb{\lambda}}{k+1}\rangle
    +
    \langle\iter{\msb{\lambda}}{k+1}-\hat{\msb{\mu}},\mb{K}\iter{\mb{x}}{k+1}+\mb{B}\iter{\mb{u}}{k+1}-\mb{b}\rangle \nonumber \\
    &\qquad=
    \langle\iter{\msb{\lambda}}{k+1},\mb{K}\hat{\mb{x}}-\mb{K}\iter{\mb{x}}{k+1}+\mb{B}\hat{\mb{u}}-\mb{B}\iter{\mb{u}}{k+1}+\mb{K}\iter{\mb{x}}{k+1}+\mb{B}\iter{\mb{u}}{k+1}-\mb{b}\rangle
    -
    \langle\hat{\msb{\mu}},\mb{K}\iter{\mb{x}}{k+1}+\mb{B}\iter{\mb{u}}{k+1}-\mb{b}\rangle \nonumber \\
    &\qquad=
    \langle\iter{\msb{\lambda}}{k+1},\mb{K}\hat{\mb{x}}+\mb{B}\hat{\mb{u}}-\mb{b}\rangle
    -
    \langle\hat{\msb{\mu}},\mb{K}\iter{\mb{x}}{k+1}+\mb{B}\iter{\mb{u}}{k+1}-\mb{b}\rangle \nonumber \\
    &\qquad=
    -\langle\hat{\msb{\mu}},\mb{K}\iter{\mb{x}}{k+1}+\mb{B}\iter{\mb{u}}{k+1}-\mb{b}\rangle \, . \label{eq:nien-16-rla-supp:lagrange_multiplier_term}
\end{align}
Note that $\mb{K}\hat{\mb{x}}+\mb{B}\hat{\mb{u}}-\mb{b}=\mb{0}$ due to the equality constraint. Using \eqref{eq:nien-16-rla-supp:lagrange_multiplier_term} yields
\begin{equation} \label{eq:nien-16-rla-supp:gap_at_k_plus_one}
    \bfx{\mathcal{G}}{\iter{\mb{w}}{k+1};\hat{\mb{w}}}
    =
    \big[\bfx{f}{\iter{\mb{x}}{k+1},\iter{\mb{u}}{k+1}}-\fx{f}{\hat{\mb{x}},\hat{\mb{u}}}\big]+\langle\iter{\underline{\mb{w}}}{k+1}-\hat{\mb{w}},\bfx{F}{\iter{\underline{\mb{w}}}{k+1}}\rangle
    \leq
    \tfrac{1}{2}\bnorm{\iter{\mb{w}}{k}-\hat{\mb{w}}}{\mb{H}}^2-\tfrac{1}{2}\bnorm{\iter{\mb{w}}{k+1}-\hat{\mb{w}}}{\mb{H}}^2 \, .
\end{equation}
Summing \eqref{eq:nien-16-rla-supp:gap_at_k_plus_one} from $k=0,\ldots,K-1$, dividing both sides by $K$, and applying Jensen's inequality to the convex function $f$, we have
\begin{multline} \label{eq:nien-16-rla-supp:gap_conv_rate}
    \bfx{\mathcal{G}}{\mb{w}_K;\hat{\mb{w}}}=\big[\bfx{f}{\mb{x}_K,\mb{u}_K}-\fx{f}{\hat{\mb{x}},\hat{\mb{u}}}\big]-\langle\hat{\msb{\mu}},\mb{K}\mb{x}_K+\mb{B}\mb{u}_K-\mb{b}\rangle \\
    \leq
    \tfrac{1}{K}\big(\tfrac{1}{2}\bnorm{\iter{\mb{w}}{0}-\hat{\mb{w}}}{\mb{H}}^2-\tfrac{1}{2}\bnorm{\iter{\mb{w}}{K}-\hat{\mb{w}}}{\mb{H}}^2\big)
    \leq
    \tfrac{1}{K}\cdot\tfrac{1}{2}\bnorm{\iter{\mb{w}}{0}-\hat{\mb{w}}}{\mb{H}}^2    
\end{multline}
since $\mb{H}$ is positive semi-definite for any $\alpha\in(0,2)$ and $\rho>0$ (Lemma~\ref{lma:nien-16-rla-supp:psd_H}). To finish the analysis, the remaining task is to upper bound $\tfrac{1}{2}\bnorm{\iter{\mb{w}}{0}-\hat{\mb{w}}}{\mb{H}}^2$. Note that $\tfrac{1}{2}\bnorm{\iter{\mb{w}}{0}-\hat{\mb{w}}}{\mb{H}}^2$ can be expressed as
\begin{equation} \label{eq:nien-16-rla-supp:norm_upper_bd_1}
    \tfrac{1}{2}\bnorm{\iter{\mb{x}}{0}-\hat{\mb{x}}}{\mb{D}_{\psi}}^2
    +
    \tfrac{1}{2}\bnorm{\iter{\mb{x}}{0}-\hat{\mb{x}}}{\mb{P}}^2
    +
    \tfrac{1}{2\alpha}
    \begin{bmatrix}
    \iter{\mb{u}}{0}-\hat{\mb{u}} \\
    \iter{\msb{\mu}}{0}-\hat{\msb{\mu}}
    \end{bmatrix}'
    \begin{bmatrix}
    \rho\mb{B}'\mb{B} & \left(1-\alpha\right)\mb{B}' \\
    \left(1-\alpha\right)\mb{B} & \tfrac{1}{\rho}\mb{I}
    \end{bmatrix}
    \begin{bmatrix}
    \iter{\mb{u}}{0}-\hat{\mb{u}} \\
    \iter{\msb{\mu}}{0}-\hat{\msb{\mu}}
    \end{bmatrix} \, .
\end{equation}
The last term in \eqref{eq:nien-16-rla-supp:norm_upper_bd_1} can be further expressed as and upper bounded by
\begin{align}
    &\,\,\,\,\,\,\,\,
    \tfrac{1}{2\alpha}
    \left[
    \rho\bnorm{\mb{B}\big(\iter{\mb{u}}{0}-\hat{\mb{u}}\big)}{2}^2
    +
    2\left(1-\alpha\right)\langle\mb{B}\big(\iter{\mb{u}}{0}-\hat{\mb{u}}\big),\iter{\msb{\mu}}{0}-\hat{\msb{\mu}}\rangle
    +
    \tfrac{1}{\rho}\bnorm{\iter{\msb{\mu}}{0}-\hat{\msb{\mu}}}{2}^2
    \right] \nonumber \\
    &\leq
    \tfrac{1}{2\alpha}
    \left[
    \rho\bnorm{\mb{B}\big(\iter{\mb{u}}{0}-\hat{\mb{u}}\big)}{2}^2
    +
    2\left|1-\alpha\right|\bnorm{\mb{B}\big(\iter{\mb{u}}{0}-\hat{\mb{u}}\big)}{2}\bnorm{\iter{\msb{\mu}}{0}-\hat{\msb{\mu}}}{2}
    +
    \tfrac{1}{\rho}\bnorm{\iter{\msb{\mu}}{0}-\hat{\msb{\mu}}}{2}^2
    \right] \nonumber \\
    &\leq
    \tfrac{1}{2\alpha}
    \left[
    \rho\bnorm{\mb{B}\big(\iter{\mb{u}}{0}-\hat{\mb{u}}\big)}{2}^2
    +
    2\bnorm{\mb{B}\big(\iter{\mb{u}}{0}-\hat{\mb{u}}\big)}{2}\bnorm{\iter{\msb{\mu}}{0}-\hat{\msb{\mu}}}{2}
    +
    \tfrac{1}{\rho}\bnorm{\iter{\msb{\mu}}{0}-\hat{\msb{\mu}}}{2}^2
    \right] \nonumber \\
    &=
    \tfrac{1}{2\alpha}
    \left[\sqrt{\rho}\bnorm{\mb{B}\big(\iter{\mb{u}}{0}-\hat{\mb{u}}\big)}{2}+\tfrac{1}{\sqrt{\rho}}\bnorm{\iter{\msb{\mu}}{0}-\hat{\msb{\mu}}}{2}\right]^2 \label{eq:nien-16-rla-supp:norm_upper_bd_2}
\end{align}
due to the fact that $0<\alpha<2$. Combining \eqref{eq:nien-16-rla-supp:gap_conv_rate}, \eqref{eq:nien-16-rla-supp:norm_upper_bd_1}, and \eqref{eq:nien-16-rla-supp:norm_upper_bd_2}, we get our final convergence rate bound:
\begin{multline} \label{eq:nien-16-rla-supp:conv_rate_comp_admm_in_proof}
    \bfx{\mathcal{G}}{\mb{w}_K;\hat{\mb{w}}}=\big[\bfx{f}{\mb{x}_K,\mb{u}_K}-\fx{f}{\hat{\mb{x}},\hat{\mb{u}}}\big]-\langle\hat{\msb{\mu}},\mb{K}\mb{x}_K+\mb{B}\mb{u}_K-\mb{b}\rangle \\
    \leq
    \frac{1}{K}
    \left\{
    \tfrac{1}{2}\bnorm{\iter{\mb{x}}{0}-\hat{\mb{x}}}{\mb{D}_{\psi}}^2
    +
    \tfrac{1}{2}\bnorm{\iter{\mb{x}}{0}-\hat{\mb{x}}}{\mb{P}}^2
    +
    \tfrac{1}{2\alpha}\left[\sqrt{\rho}\bnorm{\mb{B}\big(\iter{\mb{u}}{0}-\hat{\mb{u}}\big)}{2}+\tfrac{1}{\sqrt{\rho}}\bnorm{\iter{\msb{\mu}}{0}-\hat{\msb{\mu}}}{2}\right]^2
    \right\} \, .
\end{multline}
\end{proof}

Theorem~\ref{thm:nien-16-rla-supp:conv_rate_comp_admm} can be used to show the convergence rates of other AL-based algorithms. The following theorems show the convergence rates of the simple and the proposed relaxed LALM's in \cite{nien:16:rla}. From now on, suppose $\mb{A}$ is an $m\times n$ matrix, and let $\mb{G}\teq\mb{D}_{\mb{A}}-\mb{A}'\mb{A}$, where $\mb{D}_{\mb{A}}$ is a diagonal majorizing matrix of $\mb{A}'\mb{A}$.

\begin{theorem}[{\cite[Theorem~1]{nien:16:rla}}] \label{thm:nien-16-rla-supp:conv_rate_simple_relaxed_lalm}
Let $\mb{K}=\mb{A}$, $\mb{B}=-\mb{I}_m$, $\mb{b}=\mb{0}_m$, and $\mb{P}=\rho\mb{G}$. The iteration \eqref{eq:nien-16-rla-supp:comp_admm} with $\rho>0$ and $0<\alpha<2$ reduces to the simple relaxed LALM that achieves a convergence rate
\begin{equation} \label{eq:nien-16-rla-supp:conv_rate_simple_relax}
    \fx{\mathcal{G}}{\mb{w}_K;\hat{\mb{w}}}
    \leq
    \tfrac{1}{K}\left(A_{\mb{D}_{\psi}}+B_{\rho,\mb{D}_{\mb{A}}}+C_{\alpha,\rho}\right) \, ,
\end{equation}
where the first two constants
\begin{align}
    A_{\mb{D}_{\psi}}
    &\teq
    \tfrac{1}{2}\bnorm{\iter{\mb{x}}{0}-\hat{\mb{x}}}{\mb{D}_{\psi}}^2 \label{eq:nien-16-rla-supp:def_A} \\
    B_{\rho,\mb{D}_{\mb{A}}}
    &\teq
    \tfrac{\rho}{2}\bnorm{\iter{\mb{x}}{0}-\hat{\mb{x}}}{\mb{D}_{\mb{A}}-\mb{A}'\mb{A}}^2 \label{eq:nien-16-rla-supp:def_B}
\end{align}
depend on how far the initial guess is from a minimizer, and the last constant
\begin{equation} \label{eq:nien-16-rla-supp:def_C}
    C_{\alpha,\rho}
    \teq
    \tfrac{1}{2\alpha}\left[\sqrt{\rho}\bnorm{\iter{\mb{u}}{0}-\hat{\mb{u}}}{2}+\tfrac{1}{\sqrt{\rho}}\bnorm{\iter{\msb{\mu}}{0}-\hat{\msb{\mu}}}{2}\right]^2
\end{equation}
depends on the relaxation parameter.
\end{theorem}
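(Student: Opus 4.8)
The plan is to obtain Theorem~\ref{thm:nien-16-rla-supp:conv_rate_simple_relaxed_lalm} as a direct specialization of Theorem~\ref{thm:nien-16-rla-supp:conv_rate_comp_admm}, so essentially all of the work is substitution and bookkeeping rather than new estimates. First I would check that the hypotheses of Theorem~\ref{thm:nien-16-rla-supp:conv_rate_comp_admm} are met under the stated choices: $\rho>0$ and $0<\alpha<2$ are assumed, and $\mb{P}=\rho\mb{G}=\rho\left(\mb{D}_{\mb{A}}-\mb{A}'\mb{A}\right)\succeq\mb{0}$ because $\mb{D}_{\mb{A}}$ is a diagonal majorizer of $\mb{A}'\mb{A}$ (i.e.\ $\mb{D}_{\mb{A}}\succeq\mb{A}'\mb{A}$) and $\rho>0$. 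Hence \eqref{eq:nien-16-rla-supp:conv_rate_comp_admm} holds verbatim for the data $\mb{K}=\mb{A}$, $\mb{B}=-\mb{I}_m$, $\mb{b}=\mb{0}_m$, $\mb{P}=\rho\mb{G}$.

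Next I would verify that, with these choices, the generalized ADMM iteration \eqref{eq:nien-16-rla-supp:comp_admm} coincides with the simple relaxed LALM of \cite{nien:16:rla}. The equality constraint becomes $\mb{Ax}-\mb{u}=\mb{0}$, i.e.\ $\mb{u}=\mb{Ax}$, so \eqref{eq:nien-16-rla-supp:eq_cvx} reduces to $\min_{\mb{x}}\fx{g}{\mb{Ax}}+\fx{h}{\mb{x}}$, the problem treated in \cite{nien:16:rla}. The key algebraic point is in the $\mb{x}$-update: combining the penalty term $\tfrac{\rho}{2}\bnorm{\mb{Ax}-\iter{\mb{u}}{k}}{2}^2$ (using $\mb{B}\iter{\mb{u}}{k}=-\iter{\mb{u}}{k}$) with the proximal term $\tfrac{1}{2}\bnorm{\mb{x}-\iter{\mb{x}}{k}}{\rho\mb{G}}^2$, the $\mb{A}'\mb{A}$ contributions cancel, so the quadratic part of the minimand has the diagonal Hessian $\rho\mb{D}_{\mb{A}}$; collecting the remaining linear terms recovers exactly the (diagonally weighted, hence separable whenever $\phi$ is) $\mb{x}$-update in \cite{nien:16:rla}. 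The $\mb{u}$- and $\msb{\mu}$-updates become the corresponding relaxed split/dual updates once the relaxation variable $\alpha\mb{A}\iter{\mb{x}}{k+1}+\left(1-\alpha\right)\iter{\mb{u}}{k}$ is identified, and eliminating $\mb{u}$ then yields the compact recursion stated there.

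Finally I would read off the rate bound by substituting the same data into \eqref{eq:nien-16-rla-supp:conv_rate_comp_admm}. Since $\mb{B}=-\mb{I}_m$, we have $\bnorm{\mb{B}\big(\iter{\mb{u}}{0}-\hat{\mb{u}}\big)}{2}=\bnorm{\iter{\mb{u}}{0}-\hat{\mb{u}}}{2}$, so the last bracketed term of \eqref{eq:nien-16-rla-supp:conv_rate_comp_admm} is precisely $\tfrac{1}{2\alpha}\big[\sqrt{\rho}\bnorm{\iter{\mb{u}}{0}-\hat{\mb{u}}}{2}+\tfrac{1}{\sqrt{\rho}}\bnorm{\iter{\msb{\mu}}{0}-\hat{\msb{\mu}}}{2}\big]^2=C_{\alpha,\rho}$ from \eqref{eq:nien-16-rla-supp:def_C}. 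The term $\tfrac{1}{2}\bnorm{\iter{\mb{x}}{0}-\hat{\mb{x}}}{\mb{D}_{\psi}}^2$ is $A_{\mb{D}_{\psi}}$ from \eqref{eq:nien-16-rla-supp:def_A}, and $\tfrac{1}{2}\bnorm{\iter{\mb{x}}{0}-\hat{\mb{x}}}{\mb{P}}^2=\tfrac{\rho}{2}\bnorm{\iter{\mb{x}}{0}-\hat{\mb{x}}}{\mb{D}_{\mb{A}}-\mb{A}'\mb{A}}^2=B_{\rho,\mb{D}_{\mb{A}}}$ from \eqref{eq:nien-16-rla-supp:def_B}. Adding the three contributions gives the claimed bound \eqref{eq:nien-16-rla-supp:conv_rate_simple_relax}.

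The only step that needs genuine care is the middle one: matching the substituted iteration \eqref{eq:nien-16-rla-supp:comp_admm} to the published form of the simple relaxed LALM, in particular exhibiting the $\mb{A}'\mb{A}$ cancellation in the $\mb{x}$-update and, if the fully variable-eliminated recursion is desired, carrying out the elimination of the split variable $\mb{u}$ consistently with the relaxation. Everything else is a routine plug-in to Theorem~\ref{thm:nien-16-rla-supp:conv_rate_comp_admm}.
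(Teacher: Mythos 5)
Your proposal is correct and follows the same route as the paper, which simply invokes Theorem~\ref{thm:nien-16-rla-supp:conv_rate_comp_admm} with the substitutions $\mb{K}=\mb{A}$, $\mb{B}=-\mb{I}_m$, $\mb{b}=\mb{0}_m$, $\mb{P}=\rho\mb{G}$; your added checks (that $\mb{P}=\rho\mb{G}\succeq\mb{0}$ by the majorization property, that $\mb{B}=-\mb{I}_m$ makes the last bracketed term equal $C_{\alpha,\rho}$, and that the $\mb{A}'\mb{A}$ cancellation in the $\mb{x}$-update recovers the published simple relaxed LALM) are exactly the bookkeeping the paper leaves implicit.
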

\begin{proof} \label{pf:nien-16-rla-supp:simple_rlalm}
One just uses the substitutions $\mb{K}=\mb{A}$, $\mb{B}=-\mb{I}_m$, $\mb{b}=\mb{0}_m$, and $\mb{P}=\rho\mb{G}$ in Theorem~\ref{thm:nien-16-rla-supp:conv_rate_comp_admm} to prove the theorem.
\end{proof}
As seen in Theorem~\ref{thm:nien-16-rla-supp:conv_rate_simple_relaxed_lalm}, the convergence rate of the simple relaxed LALM scales well with the relaxation parameter $\alpha$ iff $C_{\alpha,\rho}\gg A_{\mb{D}_{\psi}}$ and $C_{\alpha,\rho}\gg B_{\rho,\mb{D}_{\mb{A}}}$. When $\psi$ has large curvature or $\mb{D}_{\mb{A}}$ is a loose majorizing matrix of $\mb{A}'\mb{A}$ (like in X-ray CT), the above inequalities do not hold, leading to worse scalability of convergence rate with the relaxation parameter $\alpha$. This motivated the proposed relaxed LALM \cite{nien:16:rla} whose convergence rate analysis is shown below.

\begin{theorem}[{\cite[Theorem~2]{nien:16:rla}}] \label{thm:nien-16-rla-supp:conv_rate_proposed_relaxed_lalm}
Let $\mb{K}=\left[\mb{A}'~\,\mb{G}^{1/2}\right]'$, $\mb{B}=-\mb{I}_{m+n}$, $\mb{b}=\mb{0}_{m+n}$, and $\mb{P}=0$. The iteration \eqref{eq:nien-16-rla-supp:comp_admm} with $\rho>0$ and $0<\alpha<2$ reduces to the proposed relaxed LALM \cite{nien:16:rla} that achieves a convergence rate
\begin{equation} \label{eq:nien-16-rla-supp:conv_rate_proposed_relax}
    \fx{\mathcal{G}'}{\mb{w}_K;\hat{\mb{w}}}
    \leq
    \tfrac{1}{K}\left(A_{\mb{D}_{\psi}}+\overline{B}_{\alpha,\rho,\mb{D}_{\mb{A}}}+C_{\alpha,\rho}\right) \, ,
\end{equation}
where $A_{\mb{D}_{\psi}}$ and $C_{\alpha,\rho}$ were defined in \eqref{eq:nien-16-rla-supp:def_A} and \eqref{eq:nien-16-rla-supp:def_C}, and
\begin{equation} \label{eq:nien-16-rla-supp:def_B_bar}
    \overline{B}_{\alpha,\rho,\mb{D}_{\mb{A}}}
    \teq
    \tfrac{\rho}{2\alpha}\bnorm{\iter{\mb{v}}{0}-\hat{\mb{v}}}{2}^2
    =
    \tfrac{\rho}{2\alpha}\bnorm{\iter{\mb{x}}{0}-\hat{\mb{x}}}{\mb{D}_{\mb{A}}-\mb{A}'\mb{A}}^2
\end{equation}
when initializing $\mb{v}$ and $\msb{\nu}$ as $\iter{\mb{v}}{0}=\mb{G}^{1/2}\iter{\mb{x}}{0}$ and $\iter{\msb{\nu}}{0}=\mb{0}_n$, respectively.
\end{theorem}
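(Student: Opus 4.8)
The plan is to specialize Theorem~\ref{thm:nien-16-rla-supp:conv_rate_comp_admm} to $\mb{K}=\left[\mb{A}'~\mb{G}^{1/2}\right]'$, $\mb{B}=-\mb{I}_{m+n}$, $\mb{b}=\mb{0}_{m+n}$, $\mb{P}=0$, and then to do two things: (i) verify that the resulting instance of \eqref{eq:nien-16-rla-supp:comp_admm} is exactly the proposed relaxed LALM of \cite{nien:16:rla}, and (ii) simplify the right-hand side of \eqref{eq:nien-16-rla-supp:conv_rate_comp_admm} to $A_{\mb{D}_\psi}+\overline{B}_{\alpha,\rho,\mb{D}_{\mb{A}}}+C_{\alpha,\rho}$. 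Throughout I would partition the $(m+n)$-dimensional split variable and its multiplier conformally with the two row blocks of $\mb{K}$, writing them as $(\mb{u},\mb{v})$ and $(\msb{\mu},\msb{\nu})$ with $\mb{v},\msb{\nu}\in\real^{n}$ attached to the block $\mb{G}^{1/2}\mb{x}$. Since the separable term $g$ does not depend on the artificial block $\mb{v}$, the reformulated problem has the same solution $\hat{\mb{x}}$ as the original one, with $\hat{\mb{v}}=\mb{G}^{1/2}\hat{\mb{x}}$ by feasibility and $\hat{\msb{\nu}}=\mb{0}$ by stationarity of the augmented Lagrangian in $\mb{v}$.

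For (i): with $\mb{b}=\mb{0}$ and $\mb{B}=-\mb{I}$ the $\mb{u}$- and $\msb{\mu}$-updates in \eqref{eq:nien-16-rla-supp:comp_admm} decouple across the two blocks. On the $(\mb{v},\msb{\nu})$ block, because $g$ is $\mb{v}$-independent the $\mb{v}$-minimization is a plain quadratic with solution $\iter{\mb{v}}{k+1}=\alpha\mb{G}^{1/2}\iter{\mb{x}}{k+1}+(1-\alpha)\iter{\mb{v}}{k}-\tfrac{1}{\rho}\iter{\msb{\nu}}{k}$; substituting this into the $\msb{\nu}$-update collapses it to $\iter{\msb{\nu}}{k+1}=\iter{\msb{\nu}}{k}-\rho\cdot\tfrac{1}{\rho}\iter{\msb{\nu}}{k}=\mb{0}$. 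Hence $\iter{\msb{\nu}}{0}=\mb{0}_n$ forces $\iter{\msb{\nu}}{k}=\mb{0}$ for all $k$, and $\iter{\mb{v}}{0}=\mb{G}^{1/2}\iter{\mb{x}}{0}$ then propagates $\iter{\mb{v}}{k}=\mb{G}^{1/2}\iter{\mb{y}}{k}$ for all $k$, where $\iter{\mb{y}}{k}$ is the over-relaxed sequence of \cite{nien:16:rla} (i.e., $\iter{\mb{y}}{0}=\iter{\mb{x}}{0}$ and $\iter{\mb{y}}{k+1}=\alpha\iter{\mb{x}}{k+1}+(1-\alpha)\iter{\mb{y}}{k}$). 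Feeding $\mb{P}=0$, $\iter{\msb{\nu}}{k}=\mb{0}$, and $\iter{\mb{v}}{k}=\mb{G}^{1/2}\iter{\mb{y}}{k}$ into the $\mb{x}$-update leaves, beyond $\tfrac{1}{2}\snorm{\mb{x}-\iter{\mb{x}}{k}}{\mb{D}_\psi}^2$, only the extra quadratic $\tfrac{\rho}{2}\snorm{\mb{Ax}-\iter{\mb{u}}{k}}{2}^2+\tfrac{\rho}{2}\snorm{\mb{x}-\iter{\mb{y}}{k}}{\mb{G}}^2$, whose Hessian is the diagonal matrix $\mb{D}_\psi+\rho\mb{D}_{\mb{A}}$; this is precisely the (cheap, separable) $\mb{x}$-update of the proposed relaxed LALM, and the $(\mb{u},\msb{\mu})$-block updates reproduce its remaining steps verbatim.

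For (ii): since $\hat{\msb{\nu}}=\iter{\msb{\nu}}{k}=\mb{0}$ and $g$ is $\mb{v}$-independent, the augmented duality gap $\mathcal{G}$ in \eqref{eq:nien-16-rla-supp:conv_rate_comp_admm} coincides with $\mathcal{G}'$, the duality gap of the original un-augmented problem evaluated at the time-averaged ``main''-block iterates $\left(\mb{x}_K,\mb{u}_K,\msb{\mu}_K\right)$, because the $\msb{\nu}$ and $\mb{v}$ contributions to both the objective difference and $\langle\hat{\msb{\mu}},\mb{Kx}_K+\mb{Bu}_K-\mb{b}\rangle$ drop out. It then remains to evaluate $\tfrac{1}{2}\snorm{\iter{\mb{w}}{0}-\hat{\mb{w}}}{\mb{H}}^2$ via \eqref{eq:nien-16-rla-supp:norm_upper_bd_1}: the $\mb{P}$-term vanishes, and because $\mb{B}=-\mb{I}_{m+n}$ is block diagonal the multiplier/split-variable quadratic form in \eqref{eq:nien-16-rla-supp:norm_upper_bd_1} splits into an independent $(\mb{u},\msb{\mu})$-part and $(\mb{v},\msb{\nu})$-part. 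The first is bounded exactly as in \eqref{eq:nien-16-rla-supp:norm_upper_bd_2}, yielding $C_{\alpha,\rho}$; in the second, $\iter{\msb{\nu}}{0}=\hat{\msb{\nu}}=\mb{0}$ annihilates the cross term and the $\tfrac{1}{\rho}\snorm{\cdot}{2}^2$ term, leaving only $\tfrac{1}{2\alpha}\cdot\rho\snorm{\iter{\mb{v}}{0}-\hat{\mb{v}}}{2}^2=\tfrac{\rho}{2\alpha}\snorm{\iter{\mb{x}}{0}-\hat{\mb{x}}}{\mb{G}}^2=\overline{B}_{\alpha,\rho,\mb{D}_{\mb{A}}}$, where we used $\hat{\mb{v}}=\mb{G}^{1/2}\hat{\mb{x}}$ and $\iter{\mb{v}}{0}=\mb{G}^{1/2}\iter{\mb{x}}{0}$. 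Adding the $\mb{D}_\psi$-term $A_{\mb{D}_\psi}$ gives the claim.

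I expect the bookkeeping in step (i) to be the main obstacle: one must match the instantiated iteration \eqref{eq:nien-16-rla-supp:comp_admm} line-by-line to the published form of the proposed relaxed LALM, in particular establishing by induction the invariants $\iter{\msb{\nu}}{k}=\mb{0}$ and $\iter{\mb{v}}{k}=\mb{G}^{1/2}\iter{\mb{y}}{k}$ and checking that the $\mb{G}^{1/2}$-split contributes the penalty $\tfrac{\rho}{2}\snorm{\mb{x}-\iter{\mb{y}}{k}}{\mb{G}}^2$ centered at the \emph{relaxed} iterate $\iter{\mb{y}}{k}$ rather than at $\iter{\mb{x}}{k}$ --- this centering, which is what makes the rate constant scale gracefully with $\alpha$, is exactly what the special initialization $\iter{\mb{v}}{0}=\mb{G}^{1/2}\iter{\mb{x}}{0}$, $\iter{\msb{\nu}}{0}=\mb{0}$ buys. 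Step (ii) is by contrast a short re-run of the end of the proof of Theorem~\ref{thm:nien-16-rla-supp:conv_rate_comp_admm}, the only new twist being to exploit the block diagonality of $\mb{B}$ and \emph{not} merge the $\mb{v}$/$\msb{\nu}$ contributions into the Cauchy--Schwarz step used for \eqref{eq:nien-16-rla-supp:norm_upper_bd_2}.
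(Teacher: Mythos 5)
Your proposal is correct and follows essentially the same route as the paper's proof: specialize Theorem~\ref{thm:nien-16-rla-supp:conv_rate_comp_admm} with the stated substitutions, exploit $\iter{\msb{\nu}}{k}=\hat{\msb{\nu}}=\mb{0}_n$ to kill the $\msb{\nu}$-block of the initial $\mb{H}$-norm, apply Cauchy--Schwarz only to the $(\mb{u},\msb{\mu})$ block to obtain $C_{\alpha,\rho}$, and read off $\overline{B}_{\alpha,\rho,\mb{D}_{\mb{A}}}$ from the remaining $\mb{v}$ term via $\iter{\mb{v}}{0}-\hat{\mb{v}}=\mb{G}^{1/2}\big(\iter{\mb{x}}{0}-\hat{\mb{x}}\big)$. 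The extra bookkeeping you anticipate in step (i) (the invariants $\iter{\msb{\nu}}{k}=\mb{0}$ and the $\mb{G}^{1/2}$-free recursion) is exactly what the paper relegates to its separate derivation of the practical implementation, so nothing is missing.
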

\begin{proof} \label{pf:nien-16-rla-supp:proposed_rlalm}
Applying the substitutions $\mb{K}=\left[\mb{A}'~\,\mb{G}^{1/2}\right]'$, $\mb{B}=-\mb{I}_{m+n}$, $\mb{b}=\mb{0}_{m+n}$, and $\mb{P}=0$ to Theorem~\ref{thm:nien-16-rla-supp:conv_rate_comp_admm}, except for the upper bounding \eqref{eq:nien-16-rla-supp:norm_upper_bd_2}, yields
\begin{equation} \label{eq:nien-16-rla-supp:conv_rate_proposed_lalm}
    \bfx{\mathcal{G}'}{\mb{w}_K;\hat{\mb{w}}}
    \leq
    \tfrac{1}{K}
    \big(
    A_{\mb{D}_{\psi}}
    +
    D_{\alpha,\rho}
    \big) \, ,
\end{equation}
where
\begin{equation} \label{eq:nien-16-rla-supp:def_D0}
    D_{\alpha,\rho}
    \teq
    \tfrac{1}{2\alpha}
    \begin{bmatrix}
    \iter{\mb{u}}{0}-\hat{\mb{u}} \\
    \iter{\mb{v}}{0}-\hat{\mb{v}} \\
    \iter{\msb{\mu}}{0}-\hat{\msb{\mu}} \\
    \iter{\msb{\nu}}{0}-\hat{\msb{\nu}}
    \end{bmatrix}'
    \begin{bmatrix}
    \rho\mb{I}_m & 0 & -\left(1-\alpha\right)\mb{I}_m & 0 \\
    0 & \rho\mb{I}_n & 0 & -\left(1-\alpha\right)\mb{I}_n \\
    -\left(1-\alpha\right)\mb{I}_m & 0 & \tfrac{1}{\rho}\mb{I}_m & 0 \\
    0 & -\left(1-\alpha\right)\mb{I}_n & 0 & \tfrac{1}{\rho}\mb{I}_n
    \end{bmatrix}
    \begin{bmatrix}
    \iter{\mb{u}}{0}-\hat{\mb{u}} \\
    \iter{\mb{v}}{0}-\hat{\mb{v}} \\
    \iter{\msb{\mu}}{0}-\hat{\msb{\mu}} \\
    \iter{\msb{\nu}}{0}-\hat{\msb{\nu}}
    \end{bmatrix} \, ,
\end{equation}
and $\mb{v}$ and $\msb{\nu}$ are the auxiliary variable and Lagrange multiplier of the additional redundant equality constraint $\mb{v}=\mb{G}^{1/2}\mb{x}$ in \cite{nien:16:rla}, respectively. Note that $\iter{\msb{\nu}}{k}=\mb{0}_n$ for $k=0,1,\ldots$ if we initialize $\msb{\nu}$ as $\iter{\msb{\nu}}{0}=\mb{0}_n$, and $\hat{\msb{\nu}}=\mb{0}_n$ \cite{nien:16:rla}. We have $\iter{\msb{\nu}}{0}-\hat{\msb{\nu}}=\mb{0}_n$. Hence, \eqref{eq:nien-16-rla-supp:def_D0} is further upper bounded by
\begin{align}
    D_{\alpha,\rho}
    &=
    \tfrac{1}{2\alpha}
    \begin{bmatrix}
    \iter{\mb{u}}{0}-\hat{\mb{u}} \\
    \iter{\mb{v}}{0}-\hat{\mb{v}} \\
    \iter{\msb{\mu}}{0}-\hat{\msb{\mu}}
    \end{bmatrix}'
    \begin{bmatrix}
    \rho\mb{I}_m & 0 & -\left(1-\alpha\right)\mb{I}_m \\
    0 & \rho\mb{I}_n & 0 \\
    -\left(1-\alpha\right)\mb{I}_m & 0 & \tfrac{1}{\rho}\mb{I}_m
    \end{bmatrix}
    \begin{bmatrix}
    \iter{\mb{u}}{0}-\hat{\mb{u}} \\
    \iter{\mb{v}}{0}-\hat{\mb{v}} \\
    \iter{\msb{\mu}}{0}-\hat{\msb{\mu}}
    \end{bmatrix} \nonumber \\
    &=
    \tfrac{1}{2\alpha}
    \left(
    \rho\bnorm{\iter{\mb{u}}{0}-\hat{\mb{u}}}{2}^2
    -
    2\left(1-\alpha\right)\langle\iter{\mb{u}}{0}-\hat{\mb{u}},\iter{\msb{\mu}}{0}-\hat{\msb{\mu}}\rangle
    +
    \tfrac{1}{\rho}\bnorm{\iter{\msb{\mu}}{0}-\hat{\msb{\mu}}}{2}^2
    \right)
    +
    \tfrac{\rho}{2\alpha}\bnorm{\iter{\mb{v}}{0}-\hat{\mb{v}}}{2}^2 \nonumber \\
    &\leq
    \tfrac{1}{2\alpha}\left[\sqrt{\rho}\bnorm{\iter{\mb{u}}{0}-\hat{\mb{u}}}{2}+\tfrac{1}{\sqrt{\rho}}\bnorm{\iter{\msb{\mu}}{0}-\hat{\msb{\mu}}}{2}\right]^2
    +
    \tfrac{\rho}{2\alpha}\bnorm{\mb{G}^{1/2}\iter{\mb{x}}{0}-\mb{G}^{1/2}\hat{\mb{x}}}{2}^2 \nonumber \\
    &=
    C_{\alpha,\rho}+\tfrac{\rho}{2\alpha}\bnorm{\iter{\mb{x}}{0}-\hat{\mb{x}}}{\mb{D}_{\mb{A}}-\mb{A}'\mb{A}}^2 \, .
\end{align}
Let
\begin{equation}
    \overline{B}_{\alpha,\rho,\mb{D}_{\mb{A}}}
    \teq
    \tfrac{\rho}{2\alpha}\bnorm{\iter{\mb{x}}{0}-\hat{\mb{x}}}{\mb{D}_{\mb{A}}-\mb{A}'\mb{A}}^2 \, .
\end{equation}
Thus, the convergence rate of the proposed relaxed LALM \cite{nien:16:rla} is upper bounded by
\begin{equation}
    \bfx{\mathcal{G}'}{\mb{w}_K;\hat{\mb{w}}}
    \leq
    \tfrac{1}{K}
    \big(
    A_{\mb{D}_{\psi}}
    +
    \overline{B}_{\alpha,\rho,\mb{D}_{\mb{A}}}
    +
    C_{\alpha,\rho}
    \big) \, .
\end{equation}
\end{proof}

\subsection{Practical implementation of the proposed relaxed LALM} \label{subsec:nien-16-rla-supp:derivation_rlalm}
Although the proposed relaxed LALM shows better scalability of the convergence rate with the relaxation parameter $\alpha$, a straightforward implementation with substitutions in Theorem~\ref{thm:nien-16-rla-supp:conv_rate_proposed_relaxed_lalm} is not recommended because there is no efficient way to compute the square root of $\mb{G}$ for any $\mb{A}$ in general. For practical implementation, we must avoid using multiplication by $\mb{G}^{1/2}$ in both the $\mb{x}$- and $\mb{v}$-updates. To derive the practical implementation, we first substitue $\mb{K}=\left[\mb{A}'~\,\mb{G}^{1/2}\right]'$, $\mb{B}=-\mb{I}$, $\mb{b}=\mb{0}$, and $\mb{P}=0$ in \eqref{eq:nien-16-rla-supp:comp_admm}. This leads to the following iterates (i.e., \cite[Eqn. 25]{nien:16:rla}):
\begin{equation} \label{eq:nien-16-rla-supp:relaxed_lal_iter0}
    \begin{cases}
    \iter{\mb{x}}{k+1}\in\argmin{\mb{x}}{\fx{\phi}{\mb{x}}+\bfx{Q_{\psi}}{\mb{x};\iter{\mb{x}}{k}}-\langle\iter{\msb{\mu}}{k},\mb{Ax}\rangle-\langle\iter{\msb{\nu}}{k},\mb{G}^{1/2}\mb{x}\rangle+\tfrac{\rho}{2}\bnorm{\mb{Ax}-\iter{\mb{u}}{k}}{2}^2+\tfrac{\rho}{2}\bnorm{\mb{G}^{1/2}\mb{x}-\iter{\mb{v}}{k}}{2}^2} \\
    \iter{\mb{u}}{k+1}\in\argmin{\mb{u}}{\fx{g}{\mb{u}}+\langle\iter{\msb{\mu}}{k},\mb{u}\rangle+\tfrac{\rho}{2}\bnorm{\iter{\mb{r}_{\mb{u},\alpha}}{k+1}-\mb{u}}{2}^2} \\
    \iter{\msb{\mu}}{k+1}=\iter{\msb{\mu}}{k}-\rho\big(\iter{\mb{r}_{\mb{u},\alpha}}{k+1}-\iter{\mb{u}}{k+1}\big) \\
    \iter{\mb{v}}{k+1}=\iter{\mb{r}_{\mb{v},\alpha}}{k+1}-\rho^{-1}\iter{\msb{\nu}}{k} \\
    \iter{\msb{\nu}}{k+1}=\iter{\msb{\nu}}{k}-\rho\big(\iter{\mb{r}_{\mb{v},\alpha}}{k+1}-\iter{\mb{v}}{k+1}\big) \, ,
    \end{cases}
\end{equation}
where $Q_{\psi}$ is a separable quadratic surrogate (SQS) of $\psi$ at $\iter{\mb{x}}{k}$ \cite[Eqn. 17]{nien:16:rla}, $\mb{r}_{\mb{u},\alpha}$ is the relaxation variable of $\mb{u}$, and $\mb{r}_{\mb{v},\alpha}$ is the relaxation variable of $\mb{v}$. Suppose $\iter{\msb{\nu}}{0}=\mb{0}$. Then $\iter{\msb{\nu}}{k}=\mb{0}$ for $k=0,1,\ldots$, and \eqref{eq:nien-16-rla-supp:relaxed_lal_iter0} can be further simplified as
\begin{equation} \label{eq:nien-16-rla-supp:relaxed_lal_iter1}
    \begin{cases}
    \iter{\mb{x}}{k+1}\in\argmin{\mb{x}}{\fx{\phi}{\mb{x}}+\bfx{Q_{\psi}}{\mb{x};\iter{\mb{x}}{k}}-\langle\iter{\msb{\mu}}{k},\mb{Ax}\rangle+\tfrac{\rho}{2}\bnorm{\mb{Ax}-\iter{\mb{u}}{k}}{2}^2+\tfrac{\rho}{2}\bnorm{\mb{G}^{1/2}\mb{x}-\iter{\mb{v}}{k}}{2}^2} \\
    \iter{\mb{u}}{k+1}\in\argmin{\mb{u}}{\fx{g}{\mb{u}}+\langle\iter{\msb{\mu}}{k},\mb{u}\rangle+\tfrac{\rho}{2}\bnorm{\iter{\mb{r}_{\mb{u},\alpha}}{k+1}-\mb{u}}{2}^2} \\
    \iter{\msb{\mu}}{k+1}=\iter{\msb{\mu}}{k}-\rho\big(\iter{\mb{r}_{\mb{u},\alpha}}{k+1}-\iter{\mb{u}}{k+1}\big) \\
    \iter{\mb{v}}{k+1}=\alpha\mb{G}^{1/2}\iter{\mb{x}}{k+1}+\left(1-\alpha\right)\iter{\mb{v}}{k} \, .
    \end{cases}
\end{equation}

Let $\mb{h}\teq\mb{G}^{1/2}\mb{v}+\mb{A}'\mb{y}$. By the $\mb{v}$-update in \eqref{eq:nien-16-rla-supp:relaxed_lal_iter1}, we have
\begin{align}
    \iter{\mb{h}}{k+1}
    &=
    \mb{G}^{1/2}\iter{\mb{v}}{k+1}+\mb{A}'\mb{y} \nonumber \\
    &=
    \mb{G}^{1/2}\big(\alpha\mb{G}^{1/2}\iter{\mb{x}}{k+1}+\left(1-\alpha\right)\iter{\mb{v}}{k}\big)+\mb{A}'\mb{y} \nonumber \\
    &=
    \alpha\big(\mb{G}\iter{\mb{x}}{k+1}+\mb{A}'\mb{y}\big)+\left(1-\alpha\right)\big(\mb{G}^{1/2}\iter{\mb{v}}{k}+\mb{A}'\mb{y}\big) \nonumber \\
    &=
    \alpha\big(\mb{D}_{\mb{A}}\iter{\mb{x}}{k+1}-\mb{A}'\big(\mb{A}\iter{\mb{x}}{k+1}-\mb{y}\big)\big)+\left(1-\alpha\right)\iter{\mb{h}}{k} \, . \label{eq:nien-16-rla-supp:h_update}
\end{align}
To avoid multiplication by $\mb{G}^{1/2}$ in the $\mb{x}$-update in \eqref{eq:nien-16-rla-supp:relaxed_lal_iter1}, we rewrite the last three terms in the $\mb{x}$-update cost function using Taylor's expansion around $\iter{\mb{x}}{k}$. That is,
\begin{align}
    &\,
    -\langle\iter{\msb{\mu}}{k},\mb{Ax}\rangle+\tfrac{\rho}{2}\bnorm{\mb{Ax}-\iter{\mb{u}}{k}}{2}^2+\tfrac{\rho}{2}\bnorm{\mb{G}^{1/2}\mb{x}-\iter{\mb{v}}{k}}{2}^2 \nonumber \\
    \propto&\,
    \tfrac{\rho}{2}\bnorm{\mb{Ax}-\iter{\mb{u}}{k}-\rho^{-1}\iter{\msb{\mu}}{k}}{2}^2+\tfrac{\rho}{2}\bnorm{\mb{G}^{1/2}\mb{x}-\iter{\mb{v}}{k}}{2}^2 \nonumber \\
    \propto&\,
    \big(\mb{x}-\iter{\mb{x}}{k}\big)'\big(\rho\mb{A}'\big(\mb{A}\iter{\mb{x}}{k}-\iter{\mb{u}}{k}-\rho^{-1}\iter{\msb{\mu}}{k}\big)+\rho\big(\mb{G}\iter{\mb{x}}{k}-\mb{G}^{1/2}\iter{\mb{v}}{k}\big)\big)+\tfrac{1}{2}\bnorm{\mb{x}-\iter{\mb{x}}{k}}{\rho\mb{A}'\mb{A}+\rho\mb{G}}^2 \nonumber \\
    =&\,
    \big(\mb{x}-\iter{\mb{x}}{k}\big)'\big(\rho\big(\mb{A}'\mb{A}+\mb{G}\big)\iter{\mb{x}}{k}-\rho\mb{A}'\big(\iter{\mb{u}}{k}+\rho^{-1}\iter{\msb{\mu}}{k}\big)-\rho\mb{G}^{1/2}\iter{\mb{v}}{k}\big)+\tfrac{1}{2}\bnorm{\mb{x}-\iter{\mb{x}}{k}}{\rho\left(\mb{A}'\mb{A}+\mb{G}\right)}^2 \nonumber \\
    =&\,
    \big(\mb{x}-\iter{\mb{x}}{k}\big)'\big(\rho\mb{D}_{\mb{A}}\iter{\mb{x}}{k}-\rho\mb{A}'\big(\iter{\mb{u}}{k}-\mb{y}+\rho^{-1}\iter{\msb{\mu}}{k}\big)-\rho\iter{\mb{h}}{k}\big)+\tfrac{1}{2}\bnorm{\mb{x}-\iter{\mb{x}}{k}}{\rho\mb{D}_{\mb{A}}}^2 \nonumber \\
    \propto&\,
    \tfrac{1}{2}\bnorm{\mb{x}-\iter{\mb{x}}{k}+\left(\rho\mb{D}_{\mb{A}}\right)^{-1}\big(\rho\mb{D}_{\mb{A}}\iter{\mb{x}}{k}-\rho\mb{A}'\big(\iter{\mb{u}}{k}-\mb{y}+\rho^{-1}\iter{\msb{\mu}}{k}\big)-\rho\iter{\mb{h}}{k}\big)}{\rho\mb{D}_{\mb{A}}}^2 \nonumber \\
    =&\,
    \tfrac{1}{2}\bnorm{\mb{x}-\left(\rho\mb{D}_{\mb{A}}\right)^{-1}\big(\rho\mb{A}'\big(\iter{\mb{u}}{k}-\mb{y}+\rho^{-1}\iter{\msb{\mu}}{k}\big)+\rho\iter{\mb{h}}{k}\big)}{\rho\mb{D}_{\mb{A}}}^2 \, . \label{eq:nien-16-rla-supp:x_update_prox_term}
\end{align}
The practical relaxed LALM without multiplication by $\mb{G}^{1/2}$ becomes
\begin{equation} \label{eq:nien-16-rla-supp:relaxed_lal_iter}
    \begin{cases}
    \iter{\mb{x}}{k+1}\in\argmin{\mb{x}}{\fx{\phi}{\mb{x}}+\bfx{Q_{\psi}}{\mb{x};\iter{\mb{x}}{k}}+\tfrac{1}{2}\bnorm{\mb{x}-\left(\rho\mb{D}_{\mb{A}}\right)^{-1}\big(\rho\mb{A}'\big(\iter{\mb{u}}{k}-\mb{y}+\rho^{-1}\iter{\msb{\mu}}{k}\big)+\rho\iter{\mb{h}}{k}\big)}{\rho\mb{D}_{\mb{A}}}^2} \\
    \iter{\mb{u}}{k+1}\in\argmin{\mb{u}}{\fx{g}{\mb{u}}+\langle\iter{\msb{\mu}}{k},\mb{u}\rangle+\tfrac{\rho}{2}\bnorm{\iter{\mb{r}_{\mb{u},\alpha}}{k+1}-\mb{u}}{2}^2} \\
    \iter{\msb{\mu}}{k+1}=\iter{\msb{\mu}}{k}-\rho\big(\iter{\mb{r}_{\mb{u},\alpha}}{k+1}-\iter{\mb{u}}{k+1}\big) \\
    \iter{\mb{h}}{k+1}=\alpha\big(\mb{D}_{\mb{A}}\iter{\mb{x}}{k+1}-\mb{A}'\big(\mb{A}\iter{\mb{x}}{k+1}-\mb{y}\big)\big)+\left(1-\alpha\right)\iter{\mb{h}}{k} \, .
    \end{cases}
\end{equation}

\subsection{Numerical example: LASSO regression} \label{subsec:nien-16-rla-supp:lasso}
Here we describe a numerical example that demonstrates the convergence rate bound and the effect of relaxation. Consider the following $\ell_1$-regularized linear regression problem:
\begin{equation} \label{eq:nien-16-rla-supp:lasso}
    \hat{\mb{x}}
    \in
    \argmin{\mb{x}}{\tfrac{1}{2}\norm{\mb{y}-\mb{Ax}}{2}^2+\lambda\norm{\mb{x}}{1}} \, ,
\end{equation}
where $\mb{A}\in\real^{m\times n}$, and $n\gg m$ in general. This is a widely studied problem in the field of statistics (also known as LASSO regression) and compressed sensing for seeking a sparse solution of a linear system with small measurement errors. To solve this problem using the proposed relaxed LALM \eqref{eq:nien-16-rla-supp:relaxed_lal_iter}, we focused on the following equivalent equality-constrained minimization problem:
\begin{equation} \label{eq:nien-16-rla-supp:lasso_eq}
    \left(\hat{\mb{x}},\hat{\mb{u}}\right)
    \in
    \argmin{\mb{x},\mb{u}}{\tfrac{1}{2}\norm{\mb{y}-\mb{u}}{2}^2+\lambda\norm{\mb{x}}{1}}\text{ s.t. }\mb{u}=\mb{Ax}
\end{equation}
with $\phi=\lambda\norm{\cdot}{1}$, $\psi=0$, $\mb{D}_{\mb{A}}=\fx{\lambda_{\text{max}}}{\mb{A}'\mb{A}}\mb{I}$, and $g=\tfrac{1}{2}\norm{\cdot-\mb{y}}{2}^2$. We set $\iter{\mb{x}}{0}=\mb{A}^{\dagger}\mb{y}$, $\iter{\mb{u}}{0}=\mb{A}\iter{\mb{x}}{0}$, $\iter{\msb{\mu}}{0}=\mb{y}-\iter{\mb{u}}{0}$, and $\iter{\mb{h}}{0}=\mb{D}_{\mb{A}}\iter{\mb{x}}{0}-\mb{A}'\big(\mb{A}\iter{\mb{x}}{0}-\mb{y}\big)$. Data for numerical instances were generated as follows. The entries of the system matrix $\mb{A}\in\real^{100\times 400}$ were sampled from an iid standard normal distribution. The hidden sparse vector $\mb{x}_s\in\real^{400}$ was a randomly generated $20$-sparse vector, and the noisy measurement $\mb{y}=\mb{A}\mb{x}_s+\mb{n}$, where $\mb{n}\in\real^{100}$ was sampled from an iid $\fx{\mathcal{N}}{0,0.1}$. The regularization parameter $\lambda$ was set to be unity in our experiment.

Figure~\ref{fig:nien-16-rla-supp:lasso_gap} shows the duality gap curves of relaxed LALM with different relaxation parameters ($\alpha=1, 1.999$) and AL penalty parameters ($\rho=0.5,0.1$). As seen in Figure~\ref{fig:nien-16-rla-supp:lasso_bound_vs_egap}, the ergodic duality gap $\fx{\mathcal{G}'}{\mb{w}_K;\hat{\mb{w}}}$ converges at rate $\fx{\mathcal{O}}{1/k}$, and the bound derived in Theorem~\ref{thm:nien-16-rla-supp:conv_rate_proposed_relaxed_lalm} is a tighter upper bound for large number of iterations. Furthermore, as seen in Figure~\ref{fig:nien-16-rla-supp:lasso_egap_vs_negap}, the non-ergodic duality gap $\bfx{\mathcal{G}'}{\iter{\mb{w}}{K};\hat{\mb{w}}}$ converges much faster than the ergodic one, and we can achieve about two-times speed-up by using $\alpha\approx 2$ empirically.
\begin{figure}
    \centering
    \subfigure[Bound \eqref{eq:nien-16-rla-supp:conv_rate_proposed_relax} vs. ergodic gap (E-gap)]{
    \includegraphics[width=0.45\textwidth]{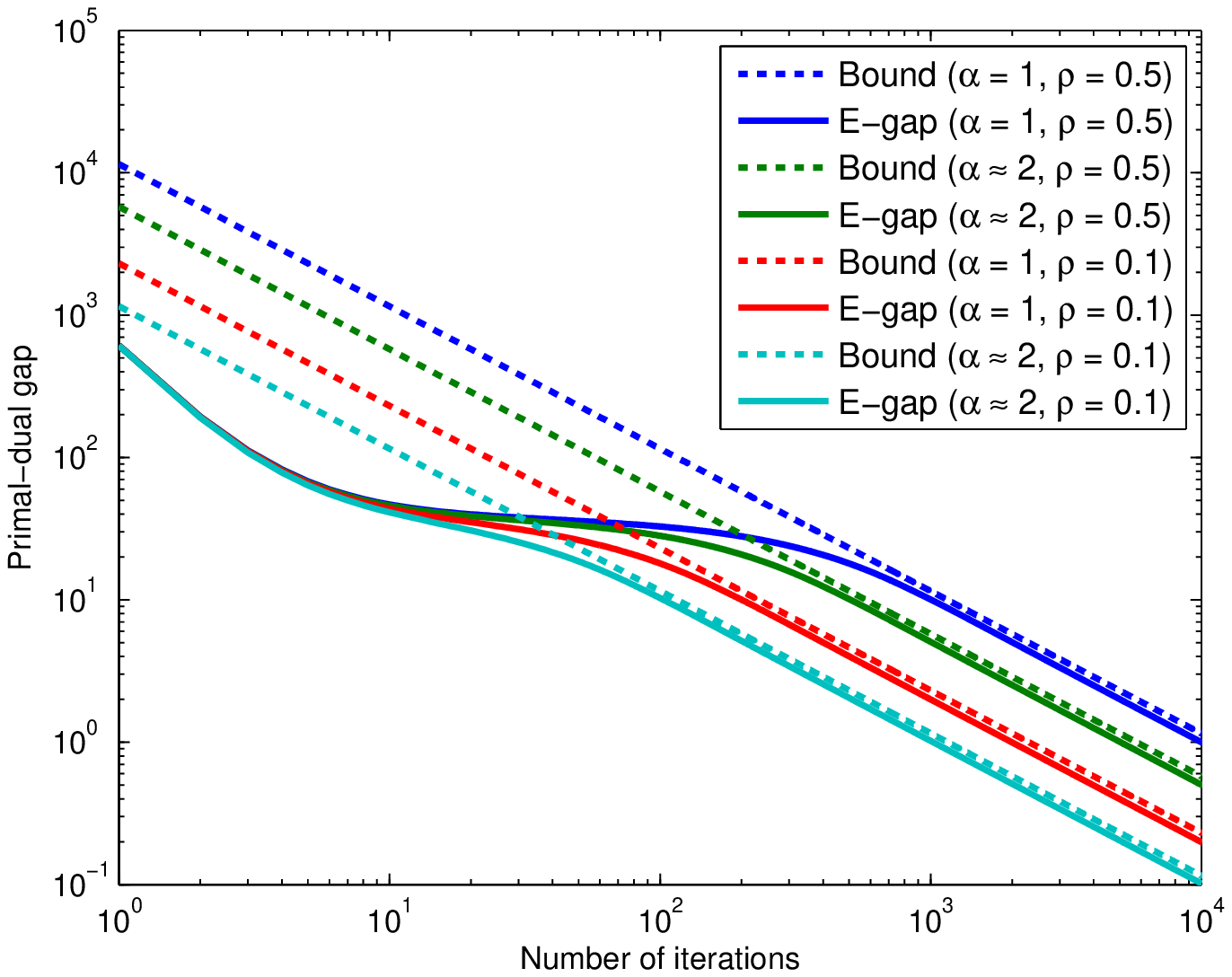}
    \label{fig:nien-16-rla-supp:lasso_bound_vs_egap}
    }
    \subfigure[Ergodic gap (E-gap) vs. non-ergodic gap (NE-gap)]{
    \includegraphics[width=0.45\textwidth]{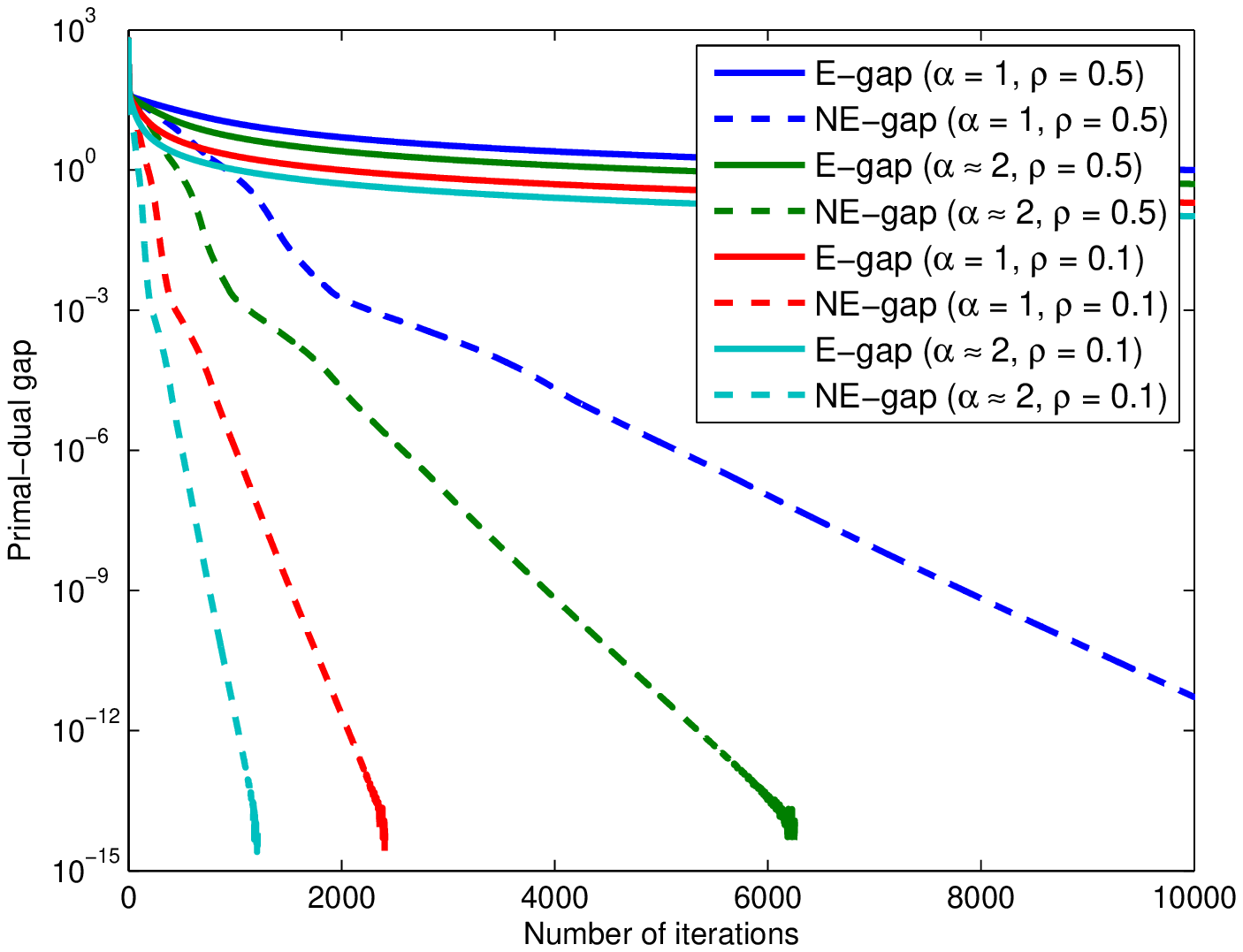}
    \label{fig:nien-16-rla-supp:lasso_egap_vs_negap}
    }
    \caption{LASSO regression: Duality gap curves of relaxed LALM with different relaxation parameters and AL penalty parameters. (a) Bound \eqref{eq:nien-16-rla-supp:conv_rate_proposed_relax} vs. ergodic gap, and (b) ergodic gap vs. non-ergodic gap.}
    \label{fig:nien-16-rla-supp:lasso_gap}
\end{figure}

\section{Continuation with over-relaxation} \label{sec:nien-16-rla-supp:continuation}
This section describes the rationale for the continuation sequence in \cite{nien:16:rla}. Consider solving a simple quadratic problem:
\begin{equation} \label{eq:nien-16-rla-supp:simple_quad_problem}
    \hat{\mb{x}}
    \in
    \nbargmin{\mb{x}}{\ts\frac{1}{2}\norm{\mb{Ax}}{2}^2} \, ,
\end{equation}
using \cite[Eqn. 33]{nien:16:rla} with $h=0$ and $\mb{y}=\mb{0}$. If $\mb{A}'\mb{A}$ is positive definite (for this analysis only), then \eqref{eq:nien-16-rla-supp:simple_quad_problem} has a unique solution $\hat{\mb{x}}=\mb{0}$. Let $\mb{V}\msb{\Lambda}\mb{V}'$ be the eigenvalue decomposition of $\mb{A}'\mb{A}$, where $\msb{\Lambda}\teq\diag{\lambda_i|\,0<\lambda_1\leq\dots\leq\lambda_n=L_{\mb{A}}}$. Updates generated by \cite[Eqn. 33]{nien:16:rla} (with $\mb{D}_{\mb{A}}=L_{\mb{A}}\mb{I}$) simplify as follows:
\begin{equation} \label{eq:nien-16-rla-supp:iter_simple_quad_problem}
    \begin{cases}
    \iter{\mb{x}}{k+1}=\tfrac{1}{\rho L_{\mb{A}}}\big((\rho-1)\iter{\mb{g}}{k}+\rho\iter{\mb{h}}{k}\big) \\
    \iter{\mb{g}}{k+1}=\tfrac{\rho}{\rho+1}\big(\alpha\mb{A}'\mb{A}\iter{\mb{x}}{k+1}+(1-\alpha)\iter{\mb{g}}{k}\big)+\tfrac{1}{\rho+1}\iter{\mb{g}}{k} \\
    \iter{\mb{h}}{k+1}=\alpha\big(L_{\mb{A}}\iter{\mb{x}}{k+1}-\mb{A}'\mb{A}\iter{\mb{x}}{k+1}\big)+(1-\alpha)\iter{\mb{h}}{k} \, .
    \end{cases}
\end{equation}

Let $\bar{\mb{x}}\teq\mb{V}'\mb{x}$, $\bar{\mb{g}}\teq\mb{V}'\mb{g}$, and $\bar{\mb{h}}\teq\mb{V}'\mb{h}$. The linear system \eqref{eq:nien-16-rla-supp:iter_simple_quad_problem} can be further diagonalized, and the $i$th components of $\bar{\mb{x}}$, $\bar{\mb{g}}$, and $\bar{\mb{h}}$ evolve as follows:
\begin{equation} \label{eq:nien-16-rla-supp:iter_simple_quad_problem_ith_x}
    \iter{\bar{x}_i}{k+1}=\tfrac{1}{\rho L_{\mb{A}}}\big((\rho-1)\iter{\bar{g}_i}{k}+\rho\iter{\bar{h}_i}{k}\big)
\end{equation}
and
\begin{equation} \label{eq:nien-16-rla-supp:iter_simple_quad_problem_ith_gh}
    \begin{cases}
    \iter{\bar{g}_i}{k+1}=\tfrac{\rho}{\rho+1}\big(\alpha\lambda_i\iter{\bar{x}_i}{k+1}+(1-\alpha)\iter{\bar{g}_i}{k}\big)+\tfrac{1}{\rho+1}\iter{\bar{g}_i}{k} \\
    \iter{\bar{h}_i}{k+1}=\alpha\big(L_{\mb{A}}\iter{\bar{x}_i}{k+1}-\lambda_i\iter{\bar{x}_i}{k+1}\big)+(1-\alpha)\iter{\bar{h}_i}{k} \, .
    \end{cases}
\end{equation}
Plugging \eqref{eq:nien-16-rla-supp:iter_simple_quad_problem_ith_x} into \eqref{eq:nien-16-rla-supp:iter_simple_quad_problem_ith_gh} leads to a second-order recursion (of $\bar{g}_i$ and $\bar{h}_i$) with a transition matrix
\begin{equation} \label{eq:nien-16-rla-supp:def_Ti}
    \mb{T}_i
    \teq
    \begin{bmatrix}
    \tfrac{\alpha\rho\lambda_i}{\rho+1}\tfrac{1}{\rho L_{\mb{A}}}\left(\rho-1\right)+\tfrac{\left(1-\alpha\right)\rho+1}{\rho+1} & \tfrac{\alpha\rho\lambda_i}{\rho+1}\tfrac{1}{\rho L_{\mb{A}}}\rho \\
    \alpha\left(L_{\mb{A}}-\lambda_i\right)\tfrac{1}{\rho L_{\mb{A}}}\left(\rho-1\right) & \alpha\left(L_{\mb{A}}-\lambda_i\right)\tfrac{1}{\rho L_{\mb{A}}}\rho+\left(1-\alpha\right) \\
    \end{bmatrix} \, ,
\end{equation}
and $\iter{\bar{x}_i}{k+1}$ is just a linear combination of $\iter{\bar{g}_i}{k}$ and $\iter{\bar{h}_i}{k}$. The eigenvalues of the transition matrix $\mb{T}_i$ defined in \eqref{eq:nien-16-rla-supp:def_Ti} determine the convergence rate of the second-order recursion, and we can analyze the second-order recursive system by studying its characteristic polynomial:
\begin{equation} \label{eq:nien-16-rla-supp:char_polynomial}
    r_i^2-\left([\mb{T}_i]_{11}+[\mb{T}_i]_{22}\right)r_i+\left([\mb{T}_i]_{11}[\mb{T}_i]_{22}-[\mb{T}_i]_{12}[\mb{T}_i]_{21}\right) \, .
\end{equation}

The proposed $\alpha$-dependent continuation sequence is based on the critical value $\rho_1^{\text{c}}$ and the damping frequency $\omega_1$ (as $\rho\approx0$) of the eigencomponent corresponding to the smallest eigenvalue $\lambda_1$ \cite{nien:15:fxr}. The critical value $\rho_1^{\text{c}}$ solves
\begin{equation} \label{eq:nien-16-rla-supp:critical_cond}
    \left([\mb{T}_1]_{11}+[\mb{T}_1]_{22}\right)^2-4\left([\mb{T}_1]_{11}[\mb{T}_1]_{22}-[\mb{T}_1]_{12}[\mb{T}_1]_{21}\right)=0 \, ,
\end{equation}
and the damping frequency $\omega_1$ satisfies \cite[p. 581]{chiang:84:fmo}
\begin{equation} \label{eq:nien-16-rla-supp:damping_cond}
    \cos\omega_1=\frac{[\mb{T}_1]_{11}+[\mb{T}_1]_{22}}{\sqrt{4\left([\mb{T}_1]_{11}[\mb{T}_1]_{22}-[\mb{T}_1]_{12}[\mb{T}_1]_{21}\right)}} \, .
\end{equation}
We solve \eqref{eq:nien-16-rla-supp:critical_cond} and \eqref{eq:nien-16-rla-supp:damping_cond} using MATLAB's symbolic toolbox. For \eqref{eq:nien-16-rla-supp:critical_cond}, we found that
\begin{equation} \label{eq:nien-16-rla-supp:critical_rho}
    \rho_1^{\text{c}}
    =
    2\sqrt{\tfrac{\lambda_1}{L_{\mb{A}}}\left(1-\tfrac{\lambda_1}{L_{\mb{A}}}\right)}
\end{equation}
is independent of $\alpha$. Hence, the optimal AL penalty parameter $\rho^{\star}\teq\rho_1^{\text{c}}$ depends only on the geometry of $\mb{A}'\mb{A}$ and does not change for different values of the relaxation parameter $\alpha$. For \eqref{eq:nien-16-rla-supp:damping_cond}, we found that
\begin{equation} \label{eq:nien-16-rla-supp:damping_psi}
    \cos\omega_1
    \approx
    \frac{1-\alpha\tfrac{\lambda_1}{L_{\mb{A}}}}{\sqrt{1-\left(2\alpha-\alpha^2\right)\tfrac{\lambda_1}{L_{\mb{A}}}}}
\end{equation}
for $\rho\approx0$. When $\alpha=1$, $\cos\omega_1\approx\sqrt{1-\lambda_1/L_{\mb{A}}}$, and thus $\omega_1\approx\sqrt{\lambda_1/L_{\mb{A}}}$ due to the small angle approximation:
\begin{equation} \label{eq:nien-16-rla-supp:small_angle_approx}
    \cos\sqrt{\theta}\approx1-\theta/2\approx\sqrt{1-\theta} \, .
\end{equation}
When $\alpha\approx2$, $\cos\omega_1\approx1-2\lambda_1/L_{\mb{A}}$, and $\omega_1\approx2\sqrt{\lambda_1/L_{\mb{A}}}$ also due to \eqref{eq:nien-16-rla-supp:small_angle_approx}.
For general $0<\alpha<2$, we can approximate $\cos\omega_1$ in \eqref{eq:nien-16-rla-supp:damping_psi} using a Taylor series as
\begin{equation} \label{eq:nien-16-rla-supp:cos_psi_general_alpha}
    \cos\omega_1
    \approx
    \left(1-\alpha\tfrac{\lambda_1}{L_{\mb{A}}}\right)
    \left(1+\tfrac{1}{2}\left(2\alpha-\alpha^2\right)\tfrac{\lambda_1}{L_{\mb{A}}}+\text{[higher-order terms]}\right)
    =
    1-\tfrac{\alpha^2}{2}\tfrac{\lambda_1}{L_{\mb{A}}}+\text{[higher-order terms]} \, .
\end{equation}
We ignore higher-order terms in \eqref{eq:nien-16-rla-supp:cos_psi_general_alpha} since $\lambda_1/L_{\mb{A}}$ is usually very small in practice. Hence, $\cos\omega_1\approx1-\big(\alpha\sqrt{\lambda_1/L_{\mb{A}}}\big)^2/2$, and $\omega_1\approx\alpha\sqrt{\lambda_1/L_{\mb{A}}}$ due to the small angle approximation \eqref{eq:nien-16-rla-supp:small_angle_approx}. This expression covers both the previous unrelaxed \mbox{($\alpha=1$)} and proposed relaxed ($\alpha\approx2$) cases. Suppose we use the same restart condition as in \cite{nien:15:fxr}; that is, restarts occur about every $\left(1/2\right)\left(\pi/\omega_1\right)$ iterations. If we restart at the $k$th iteration, we have the approximation $\sqrt{\lambda_1/L_{\mb{A}}}\approx\pi/(2\alpha k)$, and the ideal AL penalty parameter at the $k$th iteration is
\begin{equation} \label{eq:nien-16-rla-supp:rho_seq_general}
    2\sqrt{\left(\tfrac{\pi}{2\alpha k}\right)^2\big(1-\left(\tfrac{\pi}{2\alpha k}\right)^2\big)}
    =
    \tfrac{\pi}{\alpha k}\sqrt{1-\left(\tfrac{\pi}{2\alpha k}\right)^2} \, .
\end{equation}
That is, the values of $\fx{\rho_k}{\alpha}$ are scaled by the value of $\alpha$.

To demonstrate the speed-up resulting from combining continuation with over-relaxation, Figure~\ref{fig:nien-16-rla-supp:xcat_rmsd_different_alpha_nblock_12_all} shows the convergence rate curves of the proposed relaxed \mbox{OS-LALM} ($12$ subsets) using different values of the over-relaxation parameter $\alpha$ when reconstructing the simulated XCAT dataset. For comparison, the convergence rate curves that do not use continuation (fixed AL penalty parameter $\rho=0.05$) are also shown. As seen in Figure~\ref{fig:nien-16-rla-supp:xcat_rmsd_different_alpha_nblock_12}, the RMS difference of the green curve (relaxed \mbox{OS-LALM} with $\alpha=1.5$) after $10$ iterations is about the same as the RMS difference of the blue curve (unrelaxed \mbox{OS-LALM}) after $15$ iterations, exhibiting an approximately $1.5$-times speed-up. Using larger $\alpha$ (up to two) can further accelerate convergence; however, the speed-up can be slightly slower than $\alpha$-times due to the dominance of the constant $\overline{B}$ in \cite[Theorem~2]{nien:16:rla} and the accumulation of gradient errors with ordered subsets. For instance, the RMS difference of the red curve (relaxed \mbox{OS-LALM} with $\alpha=1.999$) after $5$ iterations is a bit larger the RMS difference of the blue curve (unrelaxed \mbox{OS-LALM}) after $10$ iterations.
\begin{figure}
    \centering
    \subfigure[Fixed $\rho=0.05$]{
    \includegraphics[width=0.45\textwidth]{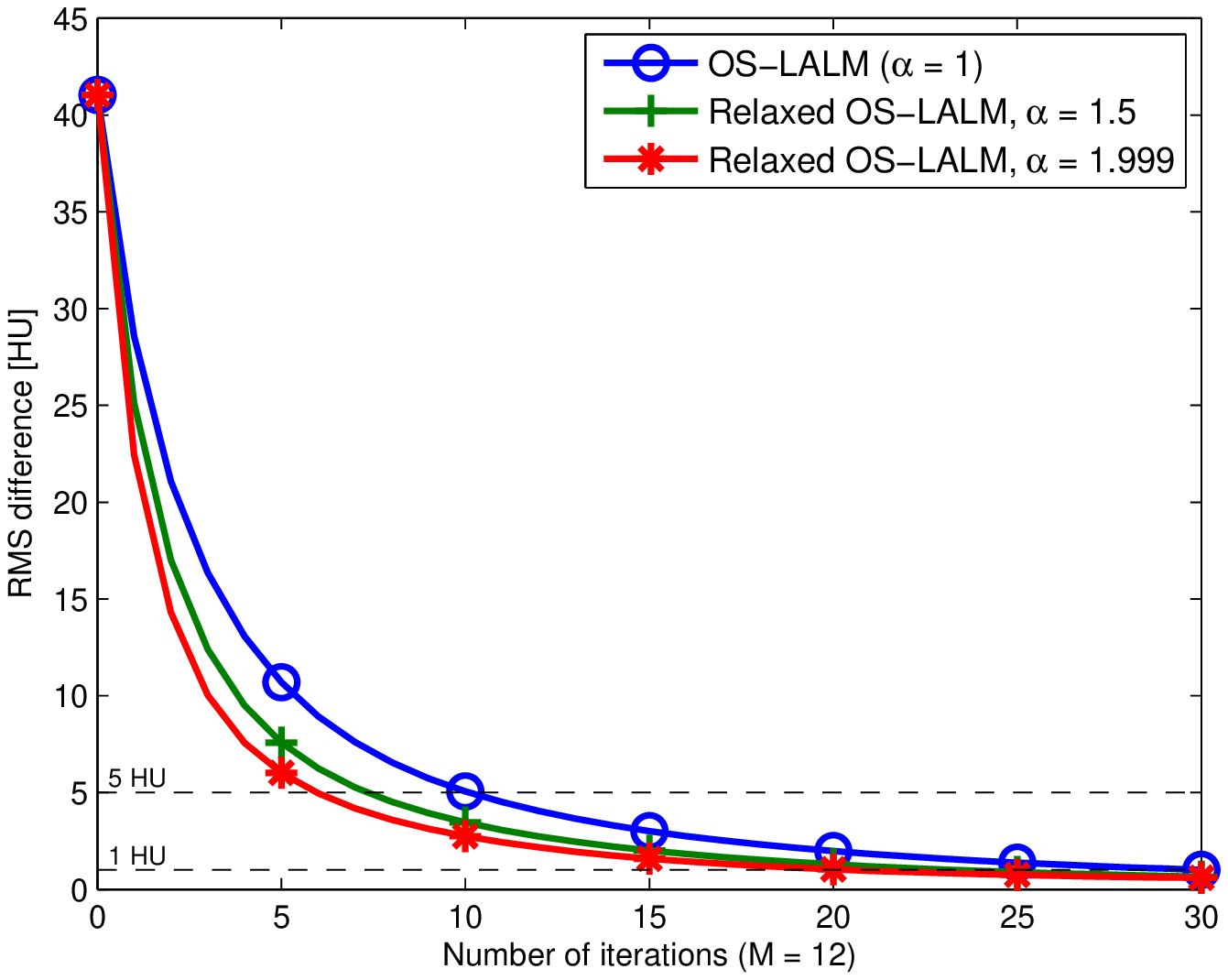}
    \label{fig:nien-16-rla-supp:xcat_rmsd_fixed_rho_different_alpha_nblock_12}
    }
    \subfigure[Proposed decreasing $\rho_k$]{
    \includegraphics[width=0.45\textwidth]{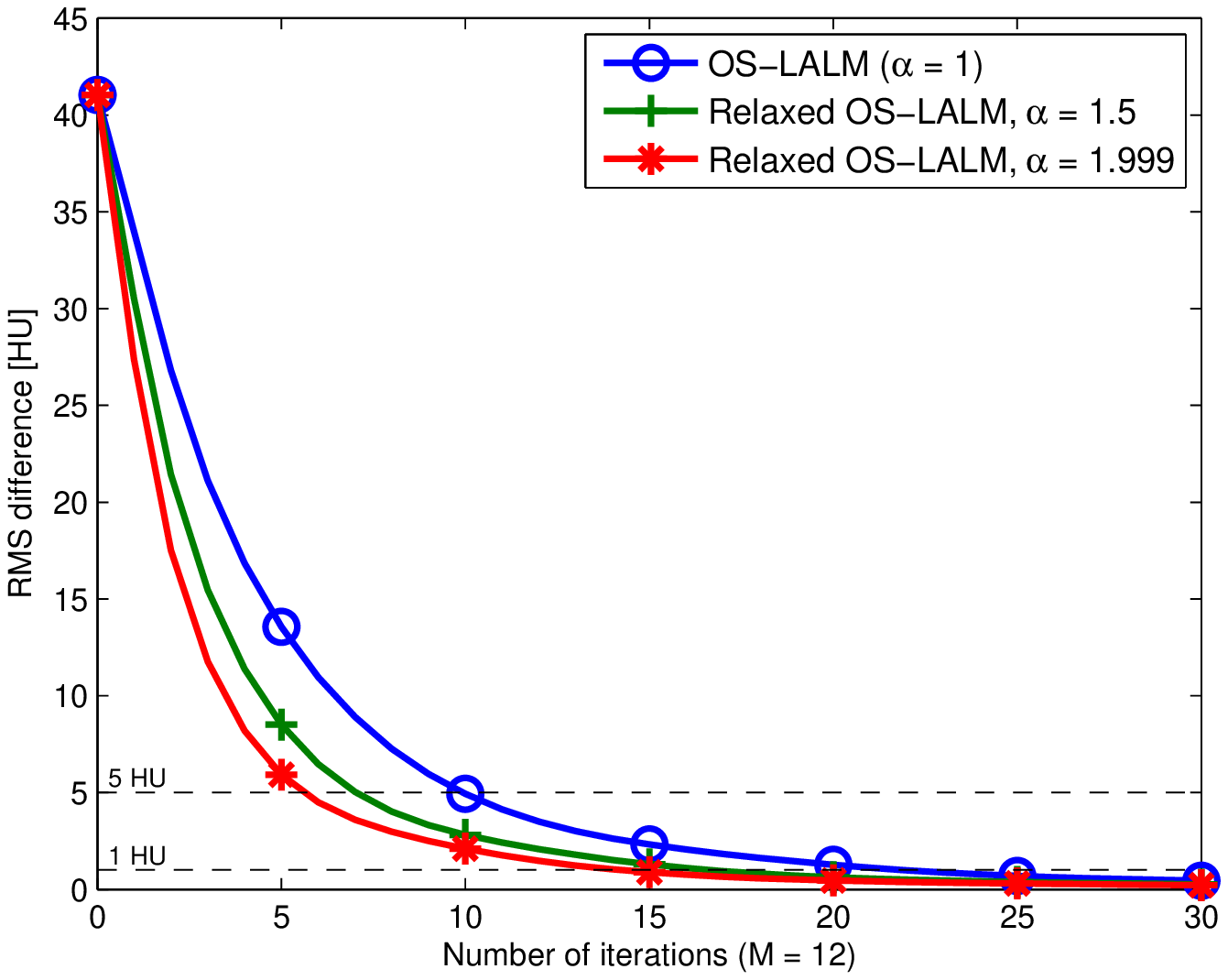}
    \label{fig:nien-16-rla-supp:xcat_rmsd_different_alpha_nblock_12}
    }
    \caption{XCAT: Convergence rate curves of relaxed \mbox{OS-LALM} ($12$ subsets) using different values of the over-relaxation parameter $\alpha$ with (a) fixed AL penalty parameter $\rho=0.05$ and (b) proposed decreasing $\rho_k$.}
    \label{fig:nien-16-rla-supp:xcat_rmsd_different_alpha_nblock_12_all}
\end{figure}

\section{Additional experimental results} \label{sec:nien-16-rla-supp:additional_results}
\subsection{XCAT phantom} \label{subsec:nien-16-rla-supp:xcat_phantom}
Additional experimental results of the simulated XCAT phantom dataset shown in \cite{nien:16:rla} are reported here. Figure~\ref{fig:nien-16-rla-supp:xcat_diff_10_iter_12_blocks} shows the difference images (in the central transaxial plane) of FBP (i.e., $\iter{\mb{x}}{0}-\mb{x}^{\star}$) and OS algorithms with $12$ subsets after $10$ iterations (i.e., $\iter{\mb{x}}{10}-\mb{x}^{\star}$). As seen in Figure~\ref{fig:nien-16-rla-supp:xcat_diff_10_iter_12_blocks}, low-frequency components converge faster than high/mid-frequency components like streaks and edges with all algorithms. This is common for gradient-based algorithms when the Hessian matrix of the cost function is more ``low-pass/band-cut'' like in X-ray CT. The difference image of the proposed relaxed \mbox{OS-LALM} shows less edge structures and looks more uniform in flat regions. Figure~\ref{fig:nien-16-rla-supp:xcat_diff_20_iter_12_blocks} shows the difference images after $20$ iterations. We can see that the proposed relaxed \mbox{OS-LALM} shows very uniform difference images, while the subtle noise-like artifacts remain with \mbox{OS-OGM2}.

To demonstrate the improvement of our ``modified'' relaxed LALM (i.e., with ordered subsets and continuation) for X-ray CT image reconstruction problems, Figure~\ref{fig:nien-16-rla-supp:xcat_rmsd_all} shows convergence rate curves of unrelaxed/relaxed \mbox{OS-LALM} using different parameter settings with (a) one subset and (b) $12$ subsets. All algorithms run $360$ subiterations; however, those with OS should be faster in runtime because they perform fewer forward/back-projections. As seen in Figure~\ref{fig:nien-16-rla-supp:xcat_rmsd_all}, convergence rate curves of OS algorithms are scaled almost perfectly (in the horizontal axis) when using modest number of subsets ($M=12$). However, the scalability might be worse when using more subsets (more severe gradient error accumulation) or in other dataset. Moreover, solid lines (relaxed algorithms) always show about two-times faster convergence rate than dashed lines (unrelaxed algorithms), without and with continuation. Note that the solid blue line (relaxed LALM, $\rho=1/6$) and the dashed green line (unrelaxed LALM, $\rho=1/12$) in both cases are overlapped after $60$ subiterations, implying that halving the AL penalty parameter $\rho$ and setting relaxation parameter $\alpha$ to be close to two have similar effect on convergence speed in this CT problem (where the data fidelity term dominates the cost function). Note that when the data-fidelity term dominates the cost function, the constant $\overline{B}$ dominates the constant multiplying $1/K$ in \cite[Theorem~2]{nien:16:rla}, leading to the better speed-up with $\alpha$.

We also investigated the effect of majorization (for both the data-fidelity term and the regularizer term) on convergence speed. Figure~\ref{fig:nien-16-rla-supp:xcat_rmsd_denom} shows the convergence rate curves of the proposed relaxed \mbox{OS-LALM} with different (a) data-fidelity term majorizations and (b) regularization term majorizations. As seen in Figure~\ref{fig:nien-16-rla-supp:xcat_rmsd_denom_loss}, the proposed algorithm diverges when $\mb{D}_{\L}$ is too small, violating the majorization condition. Larger $\mb{D}_{\L}$ slows down the algorithm. However, multiplying $\mb{D}_{\L}$ by $\kappa$-times does not necessarily slow down the algorithm by $\kappa$-times since the weighting matrix of $\overline{B}_{\alpha,\rho,\mb{D}_{\L}}$ is $\mb{D}_{\L}-\mb{A}'\mb{WA}$. Besides, larger $\mb{D}_{\L}$ helps reduce the gradient error accumulation in fast algorithms \cite{kim:15:cos}. Figure~\ref{fig:nien-16-rla-supp:xcat_rmsd_denom_reg} shows the convergence rate curves of the proposed relaxed \mbox{OS-LALM} with regularizer majorization using the maximum curvature and Huber's curvature, respectively. We can see that the speed-up of using Huber's curvature is very significant. Note that $\rho\mb{D}_{\L}+\mb{D}_{\R}$ determines the step sizes of the image update of the proposed relaxed \mbox{OS-LALM}. Better majorization of $\R$ (i.e., smaller $[\mb{D}_{\R}]_i$ for those voxels that are still far from the optimum) leads to larger image update step sizes, especially when $\rho$ is small.
\begin{figure}
    \centering
    \includegraphics[width=\textwidth]{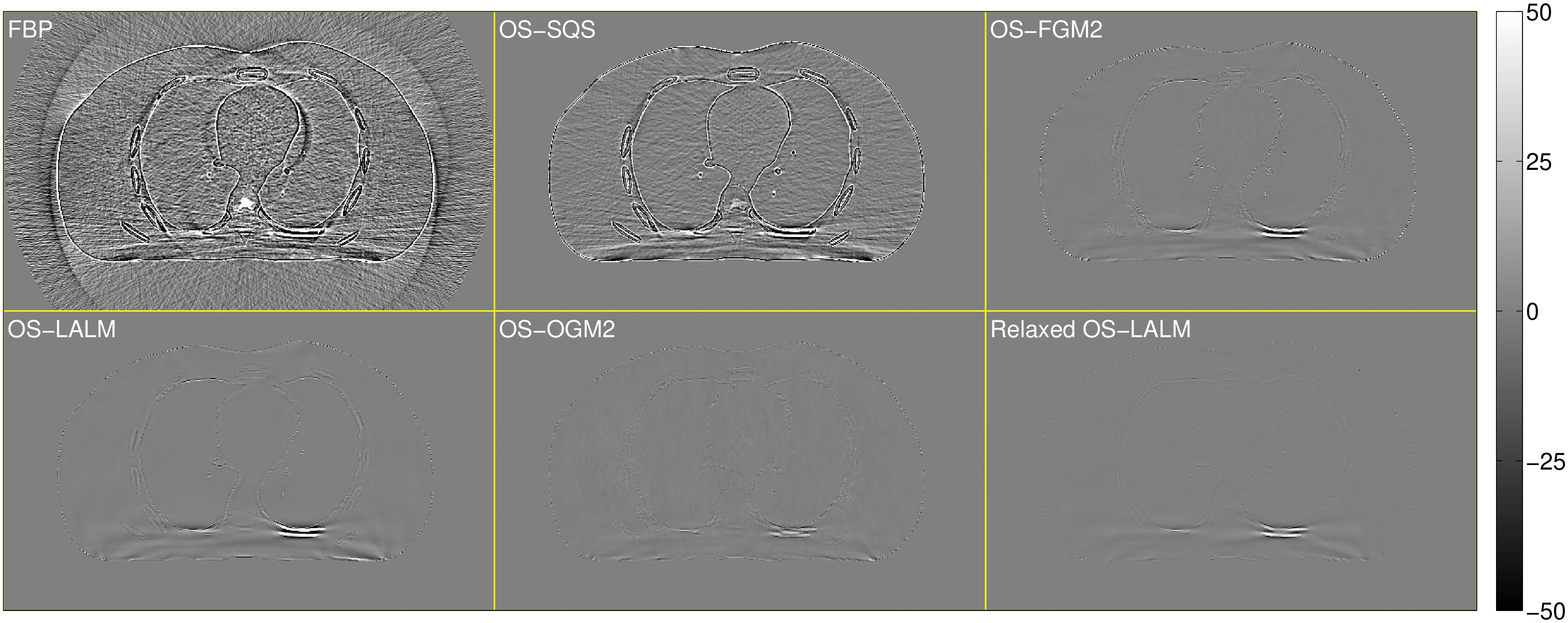}
    \caption{XCAT: Cropped difference images (displayed from ${-50}$ to $50$ HU) from the central transaxial plane of the initial FBP image $\iter{\mb{x}}{0}-\mb{x}^{\star}$ and the reconstructed image $\iter{\mb{x}}{10}-\mb{x}^{\star}$ using OS algorithms with $12$ subsets after $10$ iterations.}
    \label{fig:nien-16-rla-supp:xcat_diff_10_iter_12_blocks}
\end{figure}

\begin{figure}
    \centering
    \includegraphics[width=\textwidth]{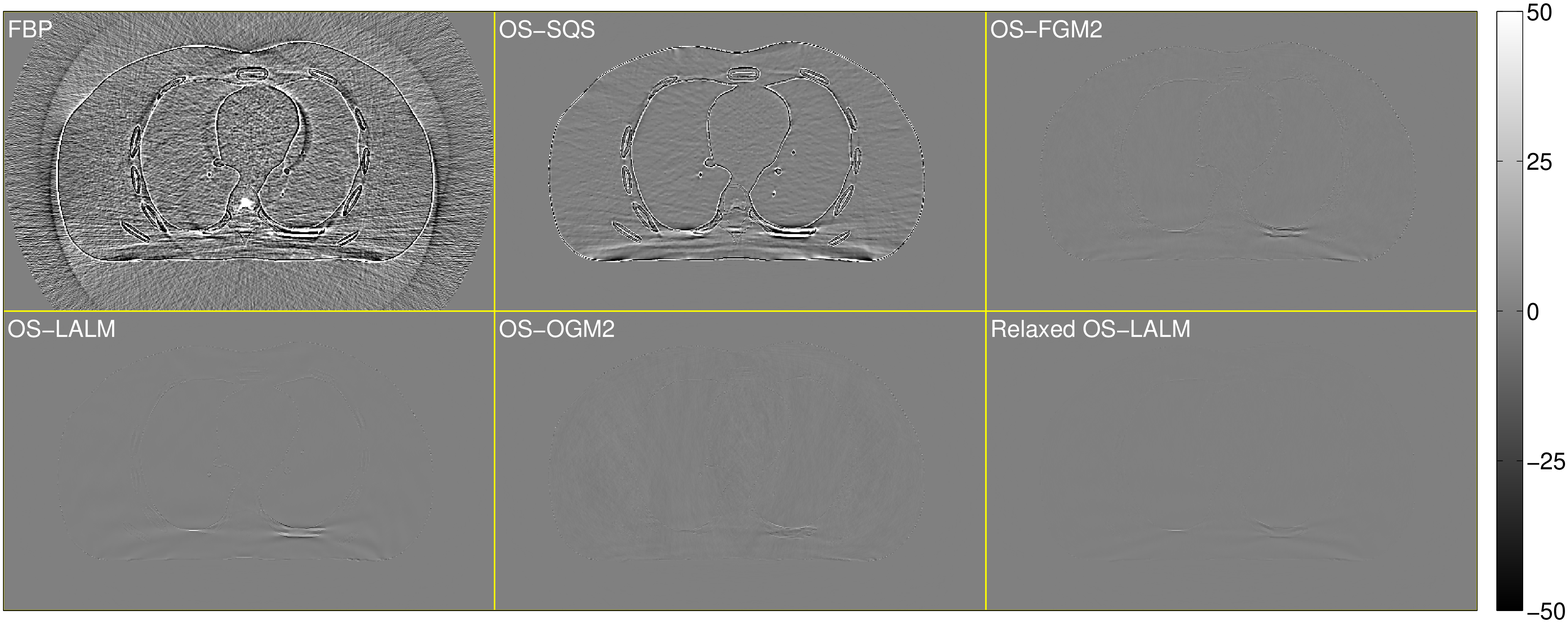}
    \caption{XCAT: Cropped difference images (displayed from ${-50}$ to $50$ HU) from the central transaxial plane of the initial FBP image $\iter{\mb{x}}{0}-\mb{x}^{\star}$ and the reconstructed image $\iter{\mb{x}}{20}-\mb{x}^{\star}$ using OS algorithms with $12$ subsets after $20$ iterations.}
    \label{fig:nien-16-rla-supp:xcat_diff_20_iter_12_blocks}
\end{figure}

\begin{figure}
    \centering
    \subfigure[One subset]{
    \includegraphics[width=0.45\textwidth]{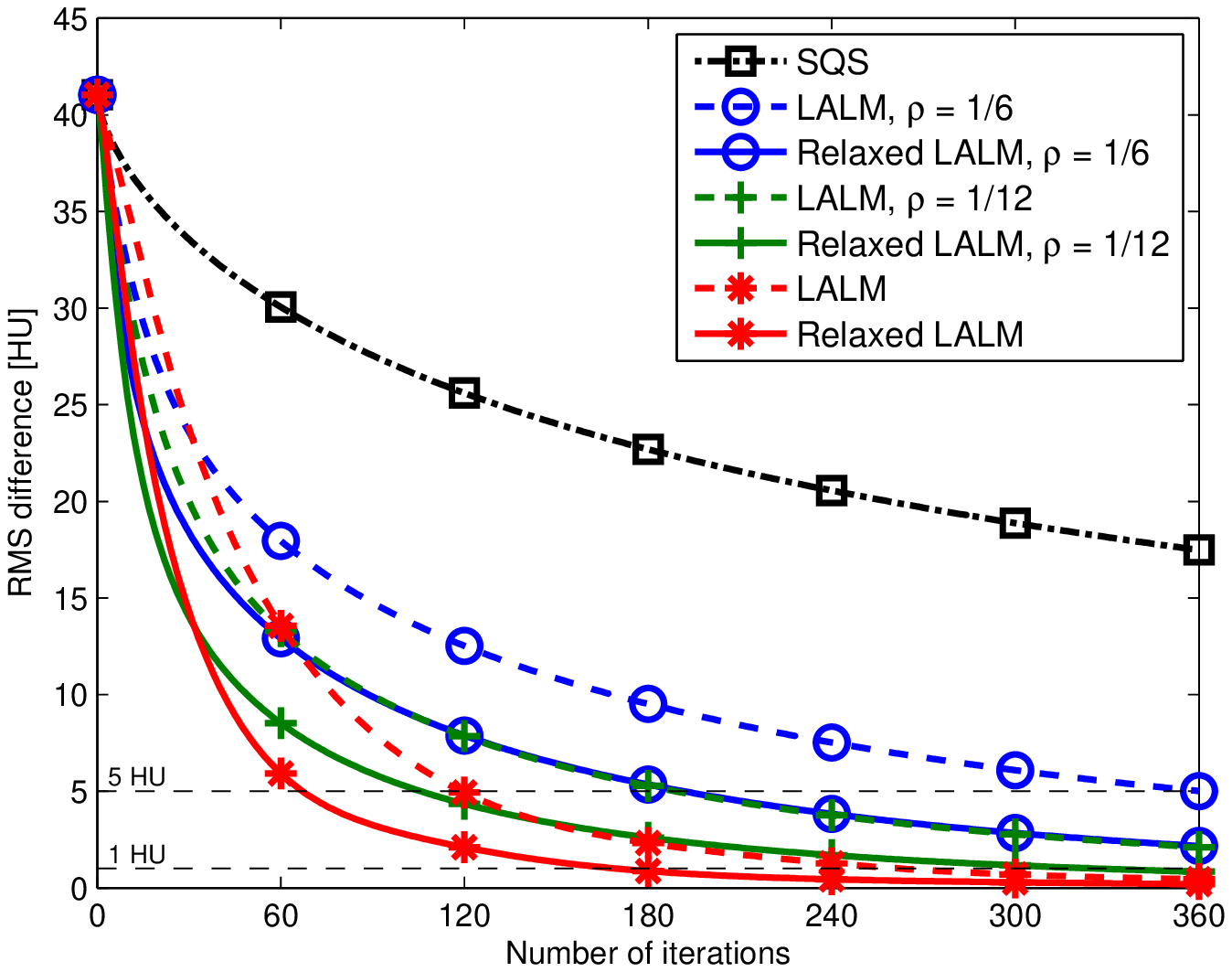}
    \label{fig:nien-16-rla-supp:xcat_rmsd_all_1_block}
    }
    \subfigure[$12$ subsets]{
    \includegraphics[width=0.45\textwidth]{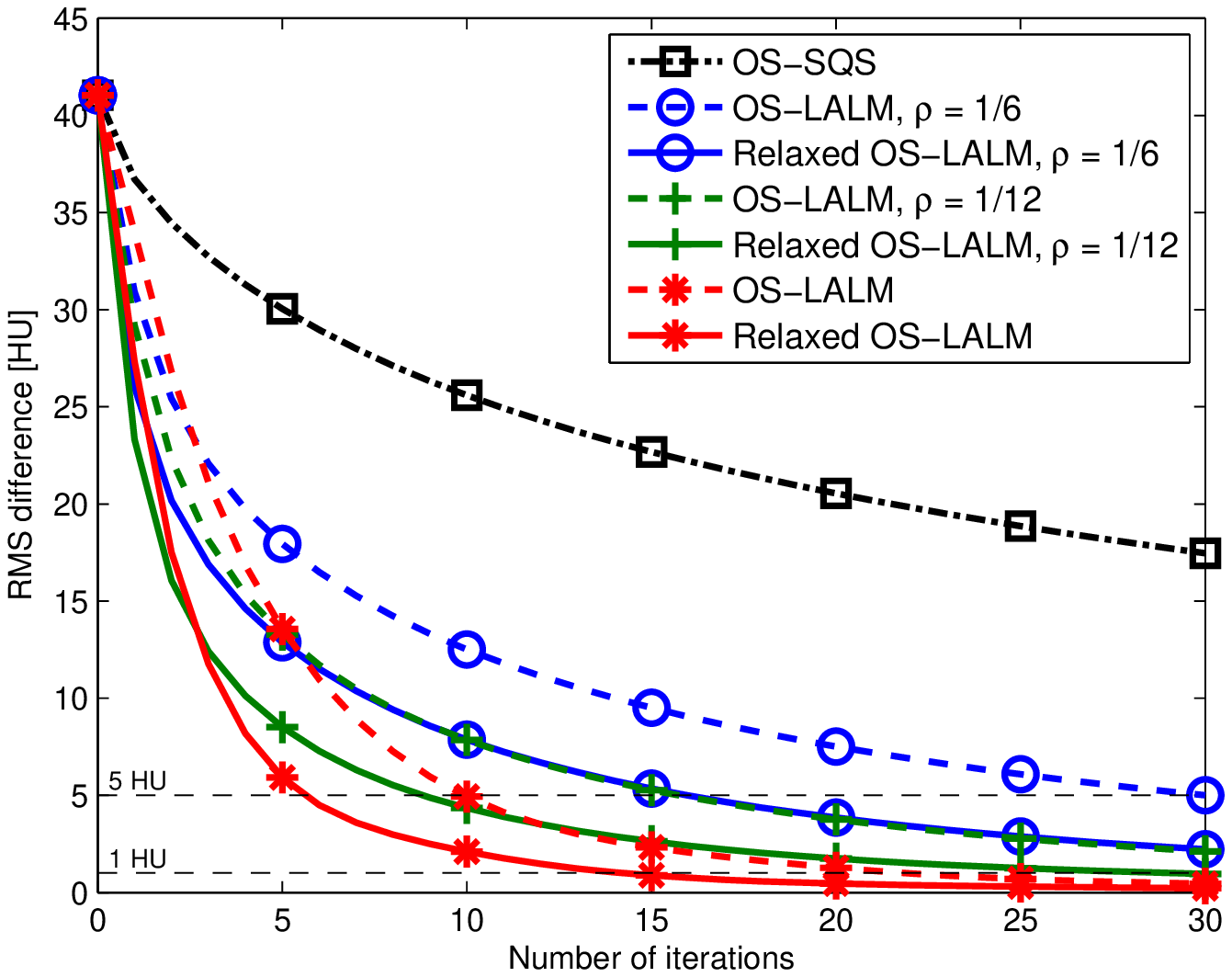}
    \label{fig:nien-16-rla-supp:xcat_rmsd_all_12_blocks}
    }
    \caption{XCAT: Convergence rate curves of unrelaxed/relaxed \mbox{OS-LALM} using different parameter settings with (a) one subset and (b) $12$ subsets. All algorithms run $360$ subiterations.}
    \label{fig:nien-16-rla-supp:xcat_rmsd_all}
\end{figure}

\begin{figure}
    \centering
    \subfigure[Data-fidelity term majorization]{
    \includegraphics[width=0.45\textwidth]{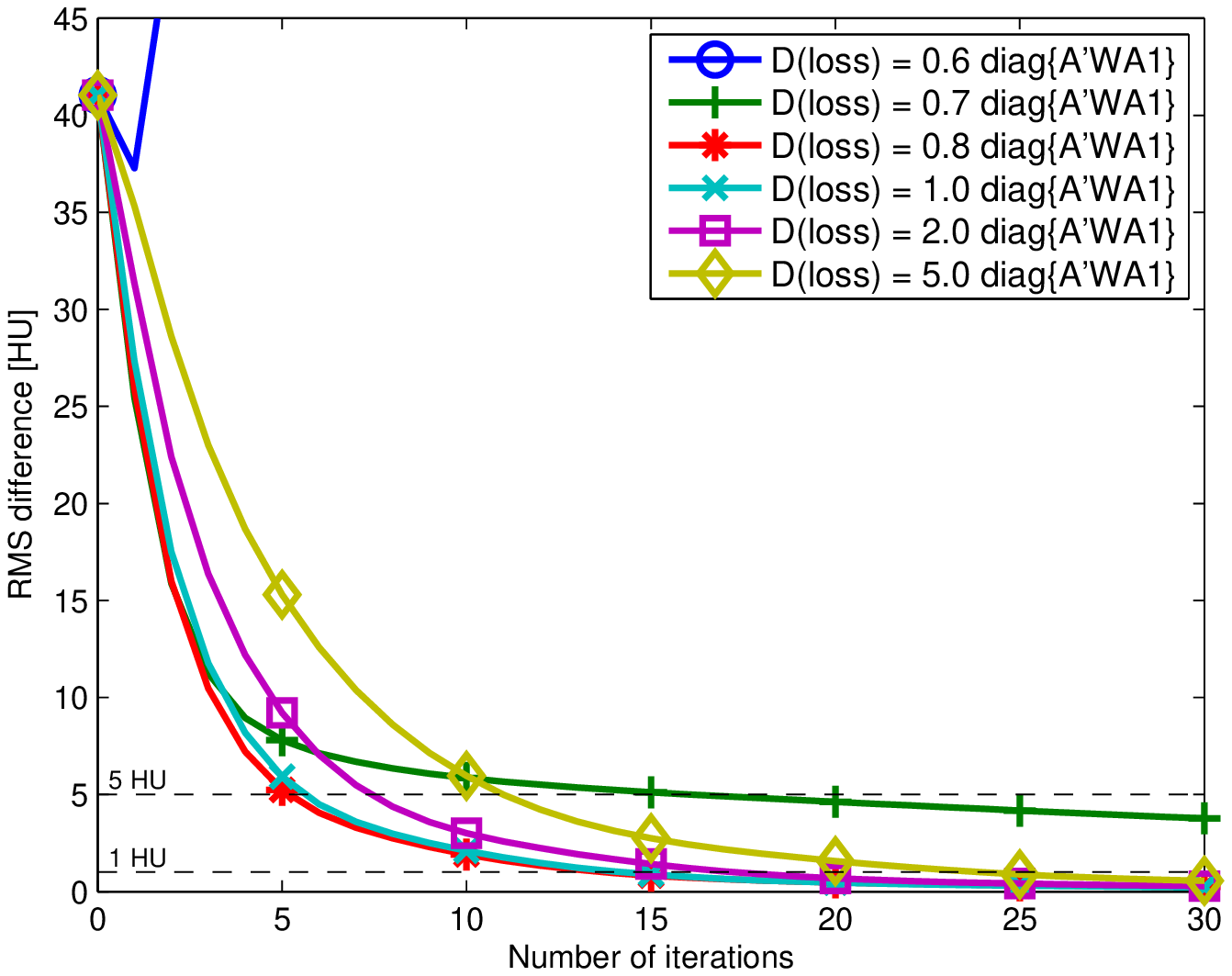}
    \label{fig:nien-16-rla-supp:xcat_rmsd_denom_loss}
    }
    \subfigure[Regularization term majorization]{
    \includegraphics[width=0.45\textwidth]{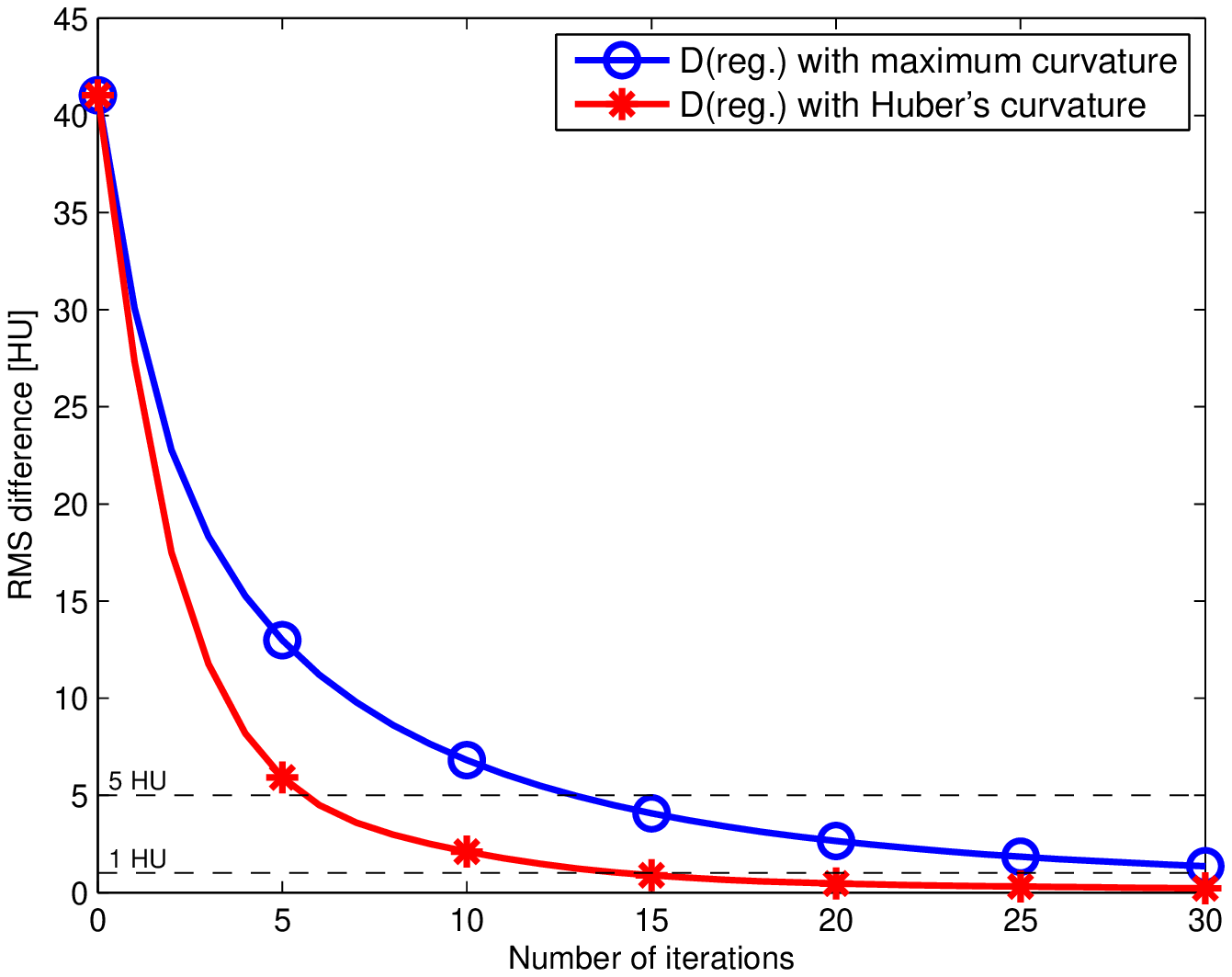}
    \label{fig:nien-16-rla-supp:xcat_rmsd_denom_reg}
    }
    \caption{XCAT: Convergence rate curves of the proposed relaxed \mbox{OS-LALM} with different (a) data-fidelity term majorizations and (b) regularization term majorizations.}
    \label{fig:nien-16-rla-supp:xcat_rmsd_denom}
\end{figure}

\subsection{Chest scan} \label{subsec:nien-16-rla-supp:chest_scan}
Additional experimental results of the chest scan dataset shown in \cite{nien:16:rla} are reported here. Figure~\ref{fig:nien-16-rla-supp:ct53_rlx_cmp_10_blocks} shows convergence rate curves of different relaxed algorithms ($10$ subsets and $\alpha=1.999$) with (a) a fixed AL penalty parameter $\rho=0.05$ and (b) the decreasing sequence $\rho_k$ proposed in \cite{nien:16:rla}. Like the experimental results with the simulated CT scan shown in \cite{nien:16:rla}, the simple relaxation does not provide much acceleration with a fixed AL penalty parameter, but it works somewhat better when using the decreasing $\rho_k$. Figure~\ref{fig:nien-16-rla-supp:ct53_diff_10_iter_10_blocks} and Figure~\ref{fig:nien-16-rla-supp:ct53_diff_20_iter_10_blocks} show the difference images (in the central transaxial plane) of FBP and OS algorithms with $10$ subsets after $10$ and $20$ iterations, respectively. Difference images of the proposed relaxed \mbox{OS-LALM} show the fewest structured artifacts among all algorithms for comparison.

\begin{figure}
    \centering
    \subfigure[Fixed $\rho=0.05$]{
    \includegraphics[width=0.45\textwidth]{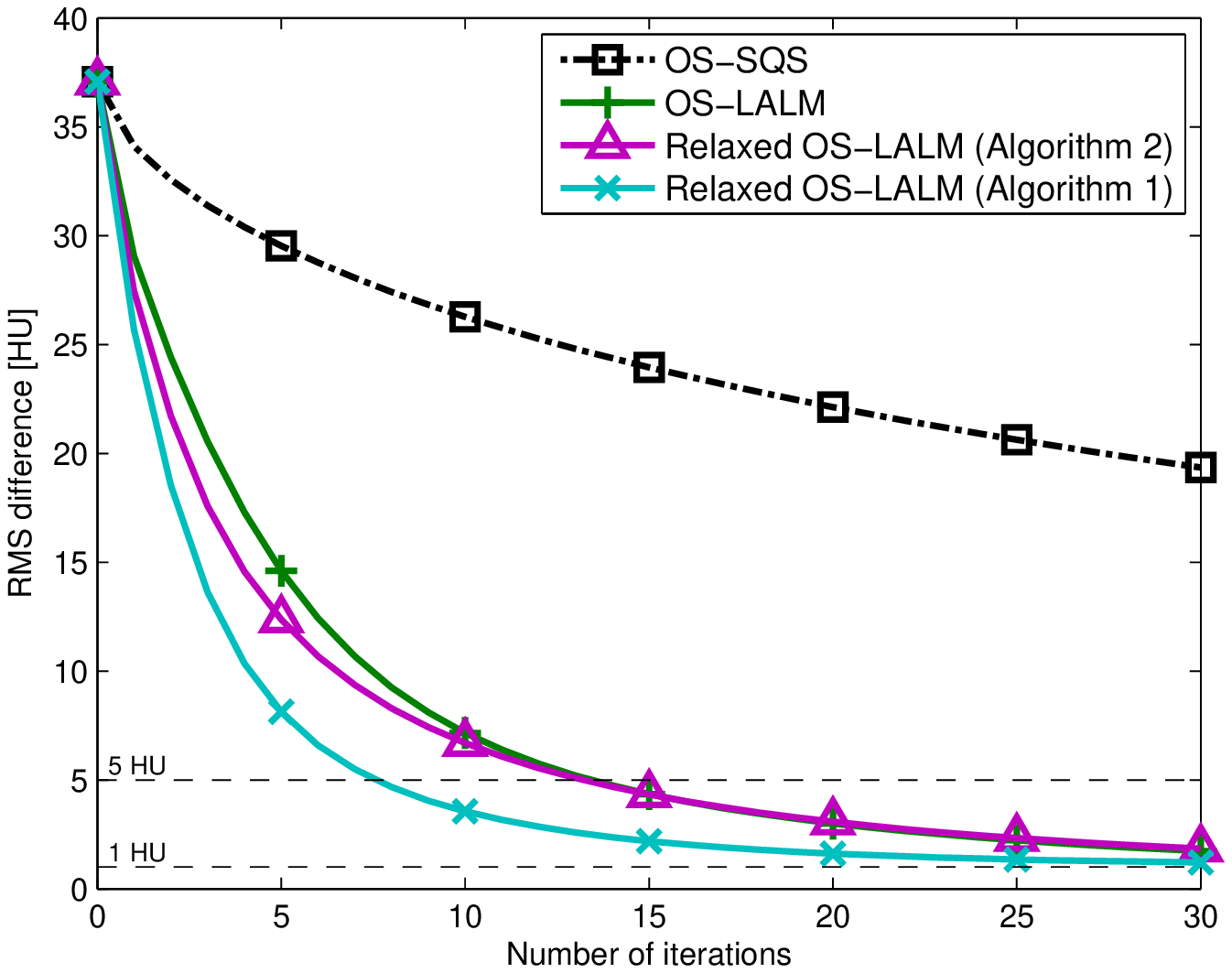}
    \label{fig:nien-16-rla-supp:ct53_relax_cmp_rho_p05_10_blocks}
    }
    \subfigure[Decreasing $\rho_k$ proposed in \cite{nien:16:rla}]{
    \includegraphics[width=0.45\textwidth]{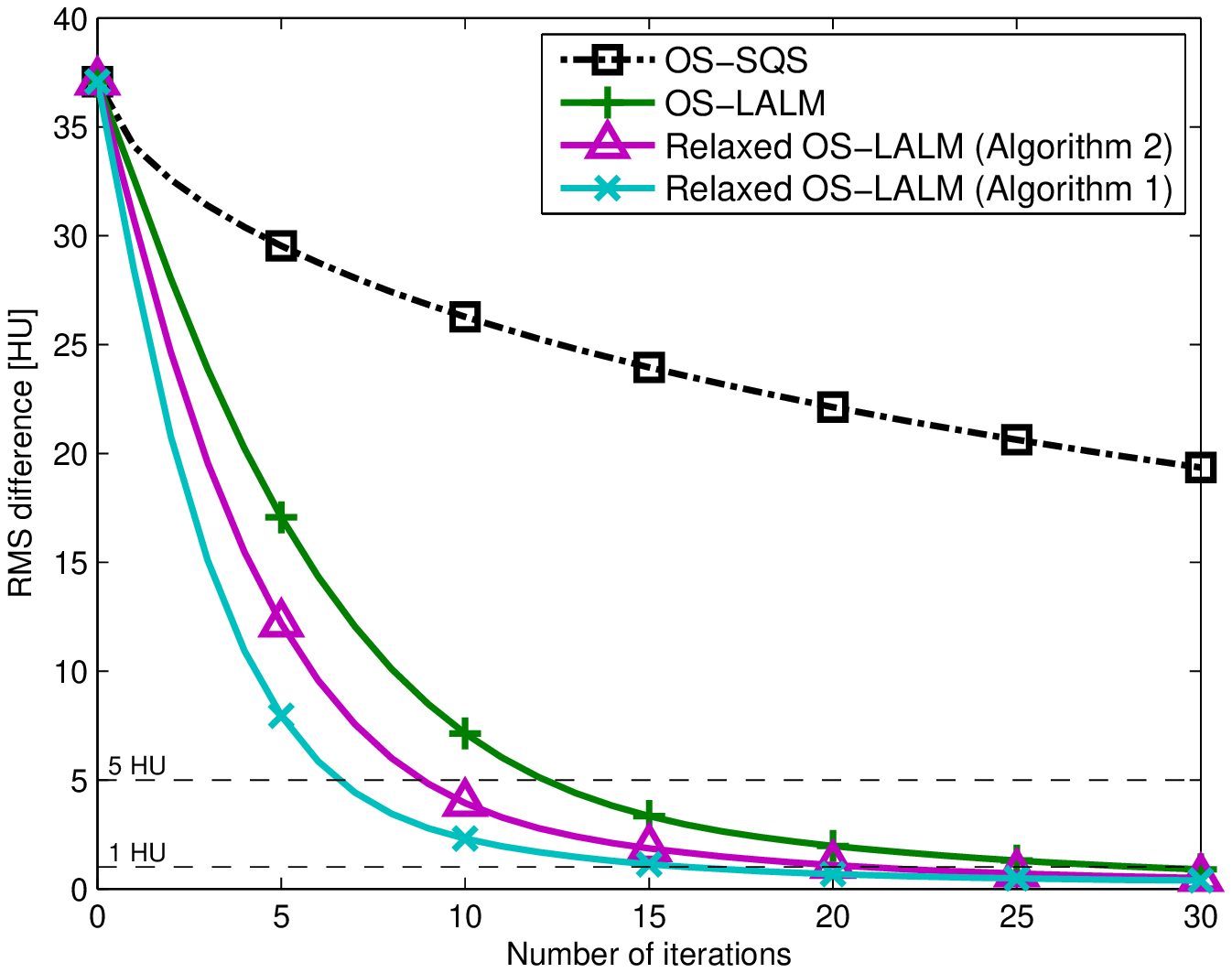}
    \label{fig:nien-16-rla-supp:ct53_relax_cmp_10_blocks}
    }
    \caption{Chest: Convergence rate curves of different relaxed algorithms ($10$ subsets and $\alpha=1.999$) with (a) a fixed AL penalty parameter $\rho=0.05$ and (b) the decreasing sequence $\rho_k$ proposed in \cite{nien:16:rla}.}
    \label{fig:nien-16-rla-supp:ct53_rlx_cmp_10_blocks}
\end{figure}

\begin{figure}
    \centering
    \includegraphics[width=\textwidth]{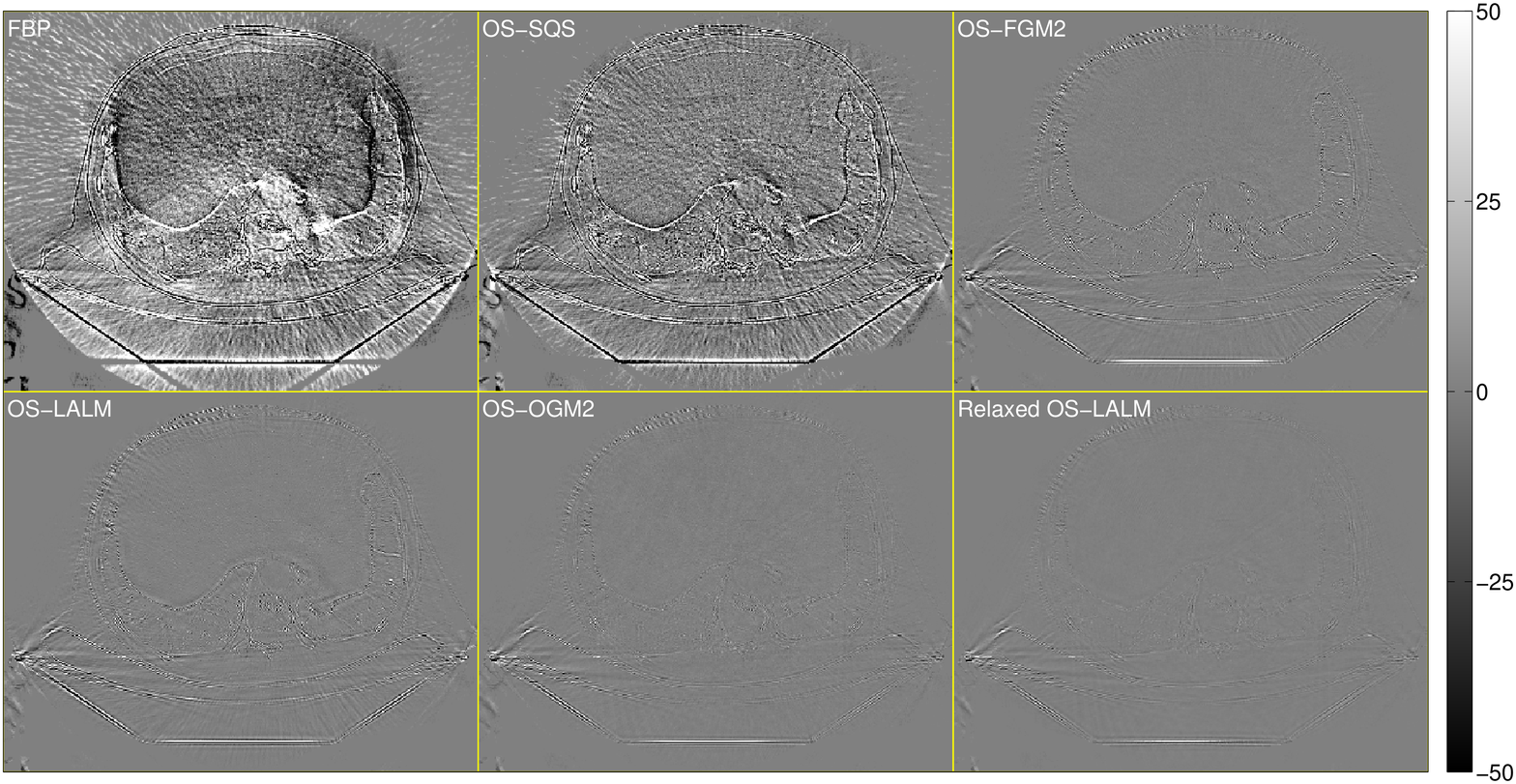}
    \caption{Chest: Cropped difference images (displayed from ${-50}$ to $50$ HU) from the central transaxial plane of the initial FBP image $\iter{\mb{x}}{0}-\mb{x}^{\star}$ and the reconstructed image $\iter{\mb{x}}{10}-\mb{x}^{\star}$ using OS algorithms with $10$ subsets after $10$ iterations.}
    \label{fig:nien-16-rla-supp:ct53_diff_10_iter_10_blocks}
\end{figure}

\begin{figure}
    \centering
    \includegraphics[width=\textwidth]{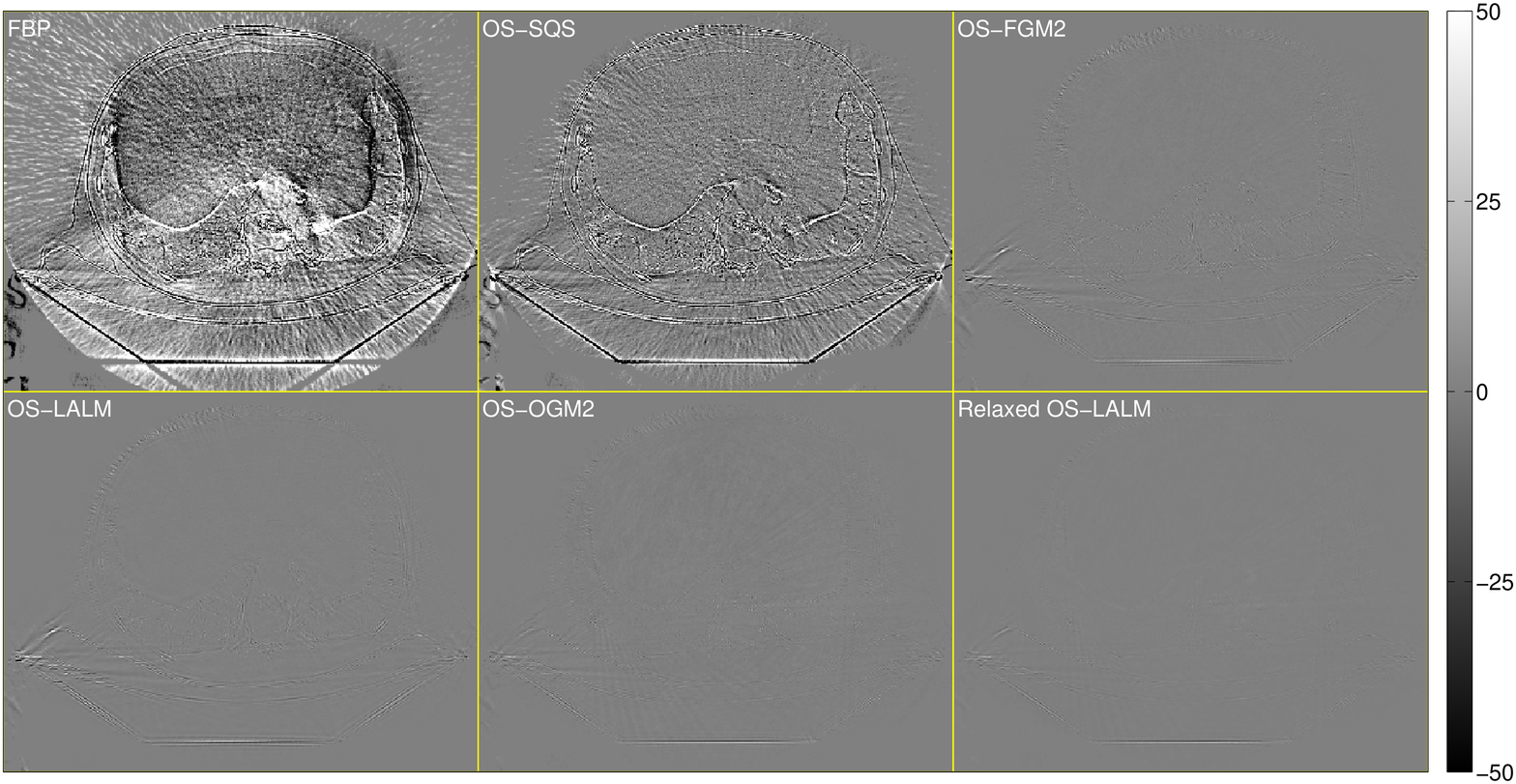}
    \caption{Chest: Cropped difference images (displayed from ${-50}$ to $50$ HU) from the central transaxial plane of the initial FBP image $\iter{\mb{x}}{0}-\mb{x}^{\star}$ and the reconstructed image $\iter{\mb{x}}{20}-\mb{x}^{\star}$ using OS algorithms with $10$ subsets after $20$ iterations.}
    \label{fig:nien-16-rla-supp:ct53_diff_20_iter_10_blocks}
\end{figure}

\subsection{Shoulder scan} \label{subsec:nien-16-rla-supp:shoulder_scan}
We reconstructed a $512\times 512\times 109$ image volume, where $\Delta_x=\Delta_y=1.3695$ mm and $\Delta_z=0.625$ mm, from a shoulder region helical CT scan. The size of sinogram is $888\times 32\times 7146$ ($\text{pitch}=0.5$, about $7.3$ rotations with rotation time $0.8$ seconds). The tube current and tube voltage of the X-ray source are $180$ mA and $140$ kVp, respectively. The initial FBP image $\iter{\mb{x}}{0}$ has lots of streak artifacts due to low signal-to-noise ratio (SNR), and we tuned the statistical weights and regularization parameters using \cite{chang:14:sxr,cho:15:rdf} to emulate \cite{thibault:07:atd,shuman:13:mbi}. We used $20$ subsets for the relaxed \mbox{OS-LALM}, while \cite[Eqn. 57]{nien:15:fxr} suggests using about $40$ subsets for the unrelaxed \mbox{OS-LALM}. Figure~\ref{fig:nien-16-rla-supp:ct01_fbp_conv_prop} shows the cropped images from the central transaxial plane of the initial FBP image $\iter{\mb{x}}{0}$, the reference reconstruction $\mb{x}^{\star}$, and the reconstructed image $\iter{\mb{x}}{20}$ using the proposed algorithm (relaxed \mbox{OS-LALM} with $20$ subsets) after $20$ iterations. Figure~\ref{fig:nien-16-rla-supp:shoulder_rmsd_cmp} shows the RMS differences between the reference reconstruction $\mb{x}^{\star}$ and the reconstructed image $\iter{\mb{x}}{k}$ using different OS algorithms as a function of iteration with $20$ and $40$ subsets. As seen in Figure~\ref{fig:nien-16-rla-supp:shoulder_rmsd_cmp}, the proposed relaxed \mbox{OS-LALM} shows faster convergence rate with moderate number of subsets, but the speed-up diminishes as the iterate approaches the solution. Figure~\ref{fig:nien-16-rla-supp:ct01_diff_10_iter_20_blocks} and Figure~\ref{fig:nien-16-rla-supp:ct01_diff_20_iter_20_blocks} show the difference images (in the central transaxial plane) of FBP and OS algorithms with $20$ subsets after $10$ and $20$ iterations, respectively. The proposed relaxed \mbox{OS-LALM} removes more streak artifacts than other OS algorithms.

\begin{figure}
    \centering
    \includegraphics[width=\textwidth]{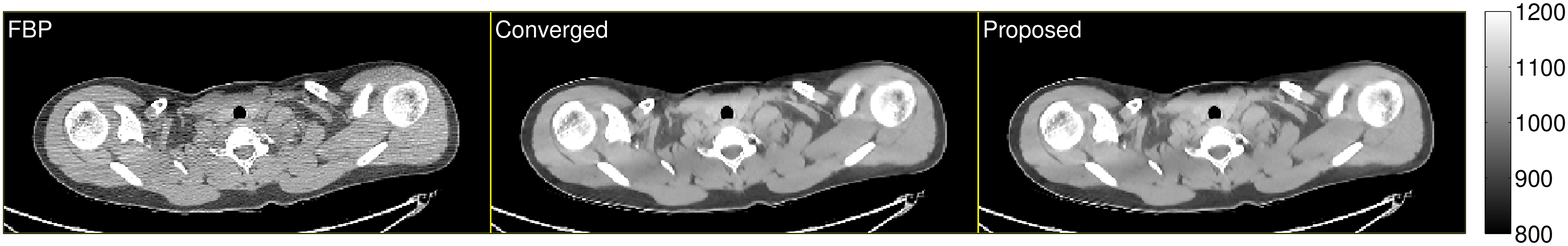}
    \caption{Shoulder: Cropped images (displayed from $800$ to $1200$ HU) from the central transaxial plane of the initial FBP image $\iter{\mb{x}}{0}$ (left), the reference reconstruction $\mb{x}^{\star}$ (center), and the reconstructed image $\iter{\mb{x}}{20}$ using the proposed algorithm (relaxed \mbox{OS-LALM} with $20$ subsets) after $20$ iterations (right).}
    \label{fig:nien-16-rla-supp:ct01_fbp_conv_prop}
\end{figure}

\begin{figure}
    \centering
    \subfigure[$20$ subsets]{
    \includegraphics[width=0.45\textwidth]{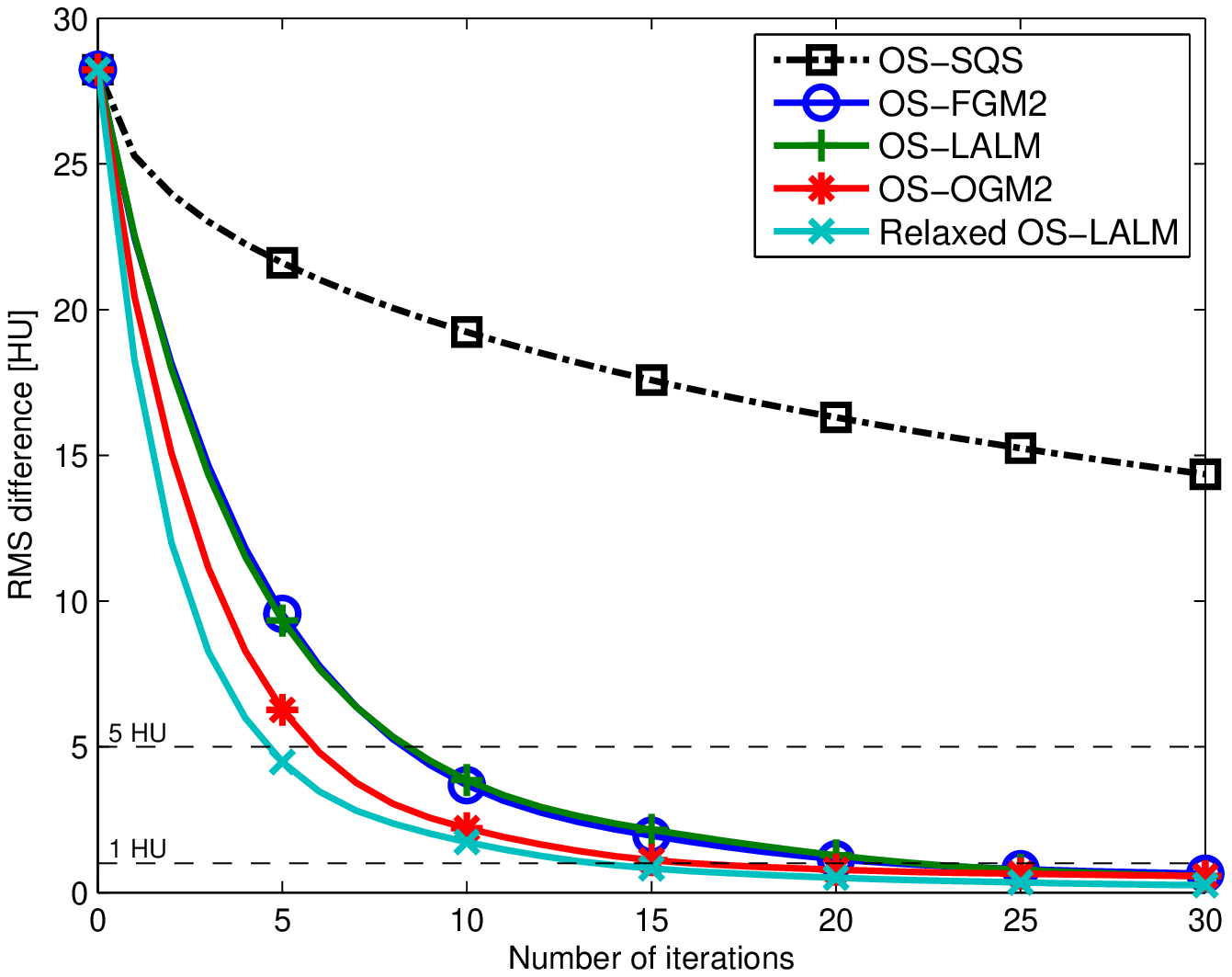}
    \label{fig:nien-16-rla-supp:shoulder_rmsd_20_blocks}
    }
    \subfigure[$40$ subsets]{
    \includegraphics[width=0.45\textwidth]{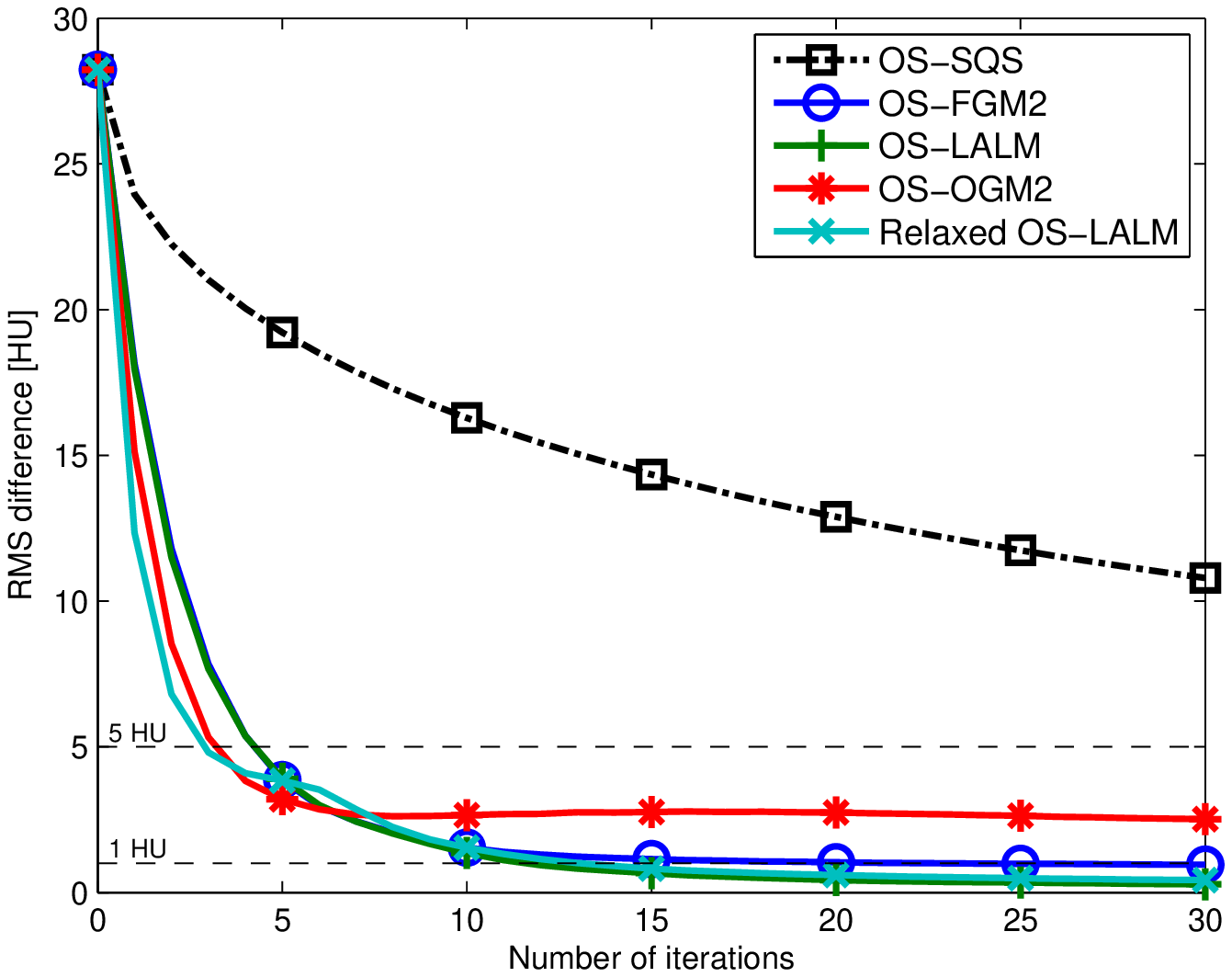}
    \label{fig:nien-16-rla-supp:shoulder_rmsd_40_blocks}
    }
    \caption{Shoulder: Convergence rate curves of different OS algorithms with (a) $20$ subsets and (b) $40$ subsets. The proposed relaxed \mbox{OS-LALM} with $20$ subsets exhibits similar convergence rate as the unrelaxed \mbox{OS-LALM} with $40$ subsets.}
    \label{fig:nien-16-rla-supp:shoulder_rmsd_cmp}
\end{figure}

\begin{figure}
    \centering
    \includegraphics[width=\textwidth]{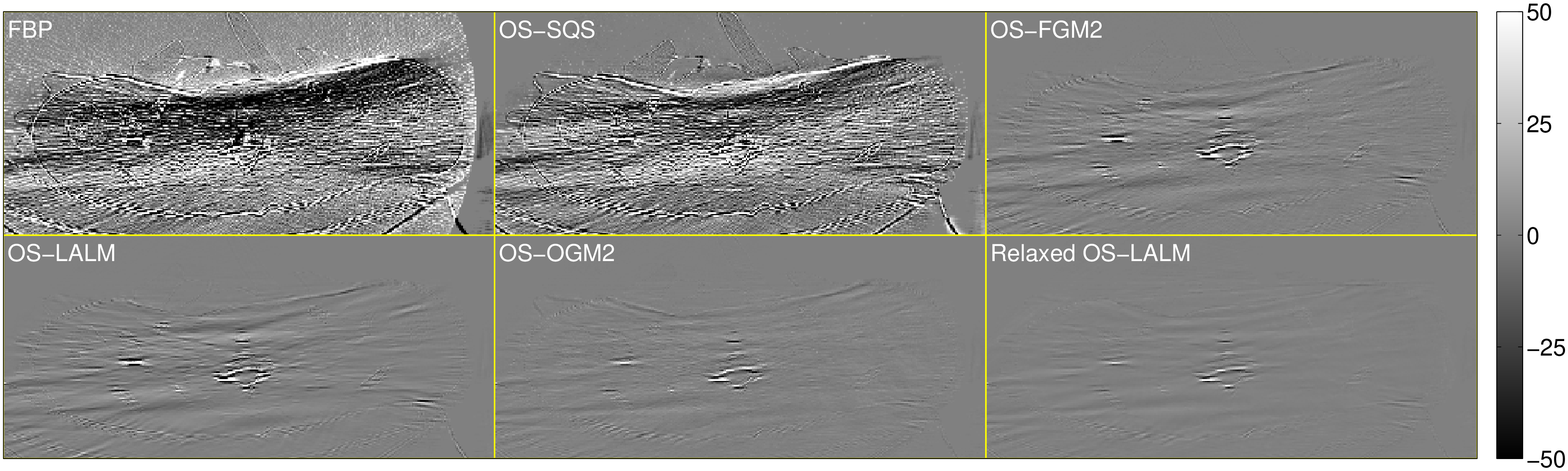}
    \caption{Shoulder: Cropped difference images (displayed from ${-50}$ to $50$ HU) from the central transaxial plane of the initial FBP image $\iter{\mb{x}}{0}-\mb{x}^{\star}$ and the reconstructed image $\iter{\mb{x}}{10}-\mb{x}^{\star}$ using OS algorithms with $20$ subsets after $10$ iterations.}
    \label{fig:nien-16-rla-supp:ct01_diff_10_iter_20_blocks}
\end{figure}

\begin{figure}
    \centering
    \includegraphics[width=\textwidth]{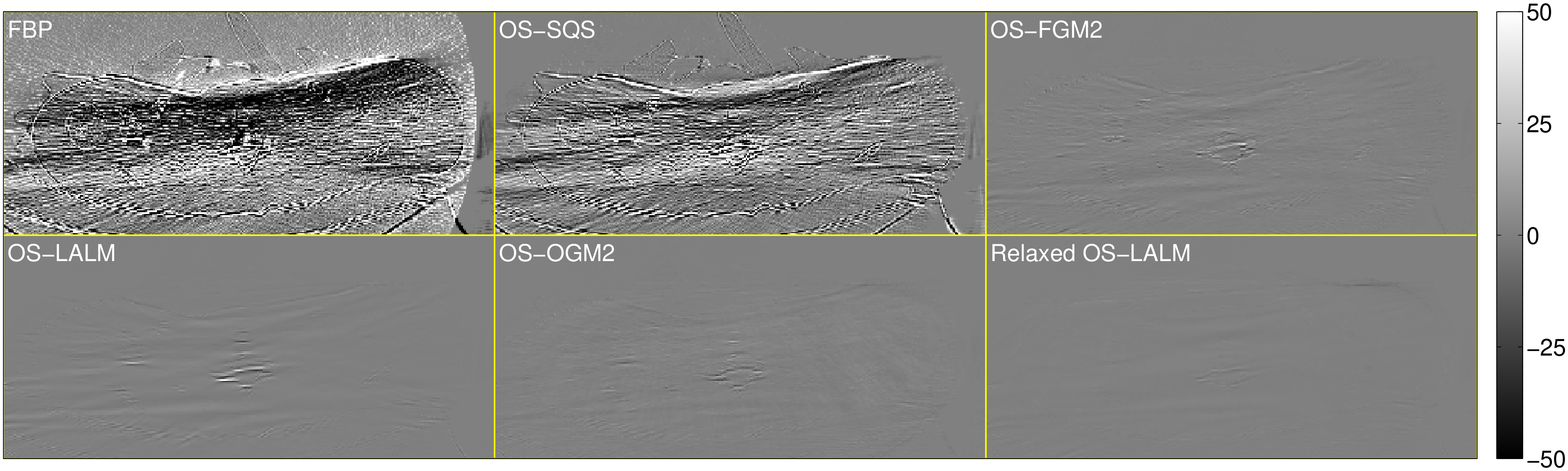}
    \caption{Shoulder: Cropped difference images (displayed from ${-50}$ to $50$ HU) from the central transaxial plane of the initial FBP image $\iter{\mb{x}}{0}-\mb{x}^{\star}$ and the reconstructed image $\iter{\mb{x}}{20}-\mb{x}^{\star}$ using OS algorithms with $20$ subsets after $20$ iterations.}
    \label{fig:nien-16-rla-supp:ct01_diff_20_iter_20_blocks}
\end{figure}

\clearpage
\bibliographystyle{ieeetr}
\bibliography{nien-16-rla-supp.bbl}

\end{document}